\documentclass[a4paper,11pt,english,reqno]{amsart}
\usepackage{amsmath,amssymb,amsfonts,dsfont,amsthm}    
\usepackage[utf8]{inputenc}
\usepackage[english]{babel}
\usepackage[T1]{fontenc} 
\usepackage{graphicx}
\usepackage{float}
\usepackage[font=small,labelfont=bf]{caption}
\usepackage[a4paper]{geometry}
\usepackage{enumerate}

\newcommand{\defeq}{\stackrel{\mathrm{def}}{=}}
\newcommand{\prob}{\mathbf{P}}
\newcommand{\erw}{\mathds{E}}
\newcommand{\e}{\mathrm{e}}

\newcommand{\tr}{\mathrm{tr\,}}

\newcommand{\supp}{\mathrm{supp\,}}
\newcommand{\spec}{\mathrm{Spec}}
\newcommand{\Ima}{\mathrm{Im\,}}
\newcommand{\Rea}{\mathrm{Re\,}}
\newcommand{\dist}{\mathrm{dist\,}}
\newcommand{\diag}{\mathrm{diag}}
\newcommand{\mO}{\mathcal{O}}
\newcommand{\C}{\mathds{C}}
\newcommand{\R}{\mathds{R}}
\newcommand{\T}{\mathds{T}}
\newcommand{\N}{\mathds{N}}
\newcommand{\Z}{\mathds{Z}}

\newcommand{\cS}{\mathcal{S}}

\newcommand{\tth}{\widetilde{h} }
\newtheorem{thm}{Theorem}
\newtheorem{cor}[thm]{Corollary}
\newtheorem{con}[thm]{Conjecture}
\newtheorem{prop}[thm]{Proposition}
\newtheorem{lem}[thm]{Lemma}
\newtheorem{rem}[thm]{Remark}
\theoremstyle{remark}

\numberwithin{equation}{section}
\setcounter{tocdepth}{1}
\usepackage{hyperref}
\hypersetup{pdfborder=0 0 0, 
	    colorlinks=true,
	    citecolor=blue,
	    linkcolor=blue,
	    urlcolor=blue,
	    pdfauthor={Martin Vogel}
	   }
\title{Almost sure Weyl law for quantized tori}
\author{Martin Vogel}
\address[Martin Vogel]{Institut de Recherche Math{\'e}matique Avanc{\'e}e - UMR 7501, 
Universit{\'e} de Strasbourg et CNRS, 7 rue René-Descartes, 67084 Strasbourg Cedex, France.}
\email{vogel@math.unistra.fr}
 \date{}
 \keywords{Spectral theory; non-self-adjoint operators; random perturbations}
\subjclass[2010]{47A10, 47B80, 47H40, 47A55}
\date{}
\begin{document}
\maketitle
\begin{abstract}
We study the eigenvalues of the Toeplitz quantization of complex-valued 
functions on the torus subject to small random perturbations given by a 
complex-valued random matrix whose entries are independent 
copies of a random variable with mean $0$, variance $1$ and bounded fourth moment. 
We prove that the eigenvalues of the perturbed operator satisfy a Weyl law with probability 
close to one, which proves in particular a conjecture by T.~Christiansen and M.~Zworski \cite{ZwChrist10}.
 \end{abstract}
 \setcounter{tocdepth}{1}
\tableofcontents
\section{Introduction}\label{int}
In this paper we consider Toeplitz quantizations of complex-valued functions on the $2d$-dimensional  
dimensional torus $\T^{2d}=\R^{2d}/\Z^{2d}$. This quantization maps smooth functions to $N^d\times N^d$ 
matrices (in general non-selfadjoint), 
\begin{equation}\label{ii1}
		C^{\infty}(\T^{2d}) \ni p \mapsto p_N\in \mathcal{L}(\C^{N^d},\C^{N^d}).
\end{equation}
We will describe this procedure in Section \ref{main} and in detail in Section \ref{sec:SC}. However, 
as in \cite{ZwChrist10}, we observe that when $d=1$, then $\T^2= S^1_x\times S^1_{\xi}$ and 
\begin{equation}\label{ii2}
		\begin{split}
			&f=f(x) \mapsto f_N=\diag (f(l/N); l=0,\dots ,N-1)\\
			&g=g(\xi) \mapsto g_N=\mathcal{F}_N^*\,\diag (g(l/N); l=0,\dots ,N-1)\mathcal{F}_N,
		\end{split}
\end{equation}
where $\mathcal{F}_N^*= N^{-1/2}(\exp(2\pi i k\ell/N))_{0\leq k,\ell \leq N-1}$ is the discrete Fourier 
transform. In the case of $\T^2$, the operators $p_N$ are also referred to as \emph{twisted Toeplitz matrices}, 
see \cite{TrEm05,TrCh04}. 
\\
\par
As an example we may consider the \emph{Scottish flag operator} \cite{ZwChrist10,TrEm05} 
given by the symbol
\begin{equation}\label{ii3.2}
		p(x,\xi) = \cos(2\pi x) + i\cos(2\pi \xi), \quad (x,\xi)\in \T^2.
\end{equation}
From \eqref{ii2} we get that 
\begin{equation}\label{ii3.1}
		p_N = \begin{pmatrix}
			  \cos x_1 & i/2 & 0 & 0 & \dots & i/2 \\
			  i/2  & \cos x_2 & i/2  & 0 &\dots & 0\\
			  0 & i/2  & \cos x_3 & i/2  & \ddots &0 \\ 
			  \vdots&  \ddots & \ddots & \ddots & \ddots & \vdots \\ 
			  0  & \dots & 0& i/2  & \cos x_{N-1} &i/2 \\ 
			  i/2   & 0 & \dots & 0 & i/2  & \cos x_N  \\ 
		           \end{pmatrix}
\end{equation}
where $x_j = 2\pi j/N$, $j=1,\dots, N$.
\begin{figure}[ht]
  \centering
   \begin{minipage}[b]{0.49\linewidth}
  \centering
   \includegraphics[width=\textwidth]{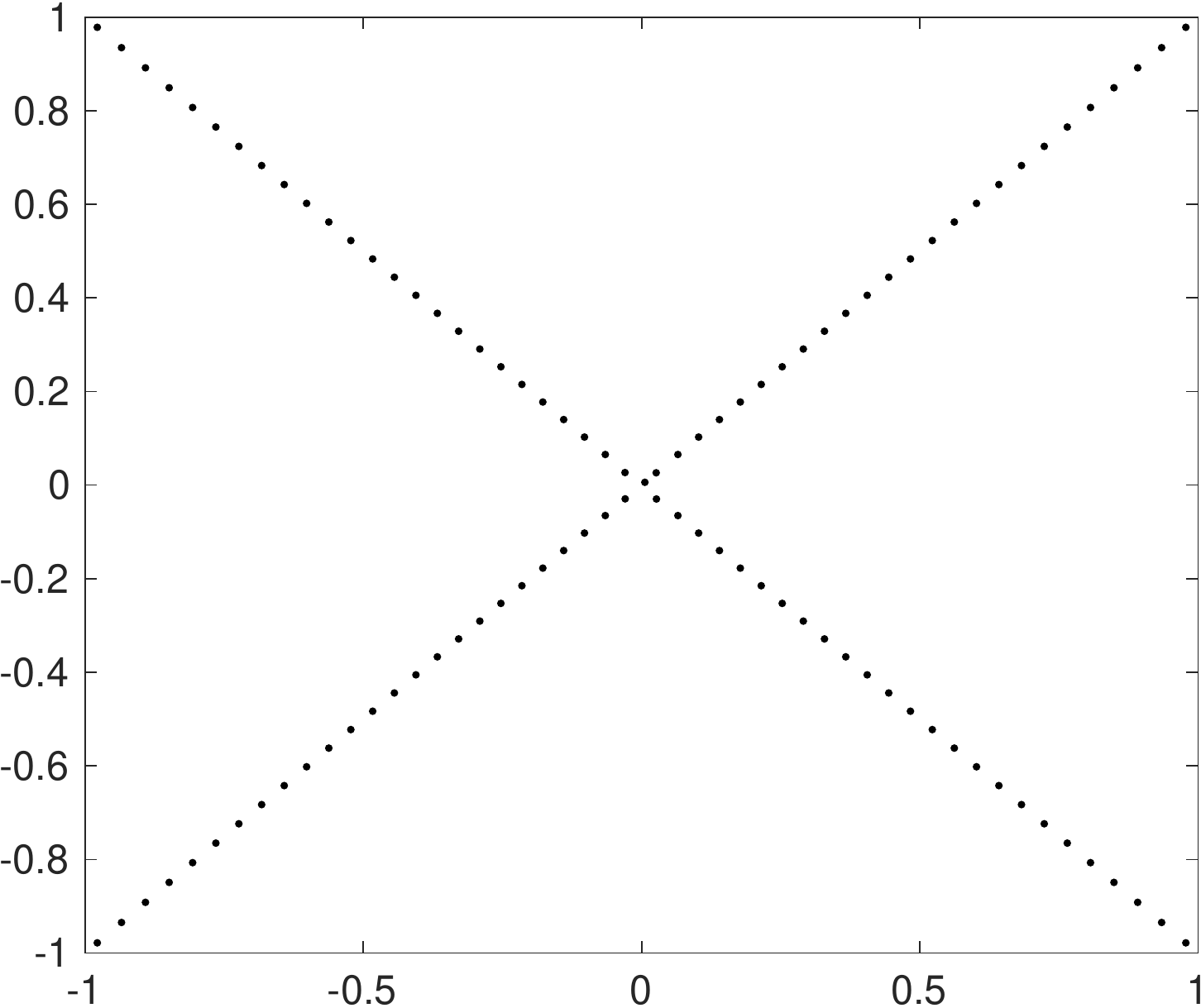}
 \end{minipage}
 \hspace{0cm}
 \begin{minipage}[b]{0.49\linewidth}
  \includegraphics[width=\textwidth]{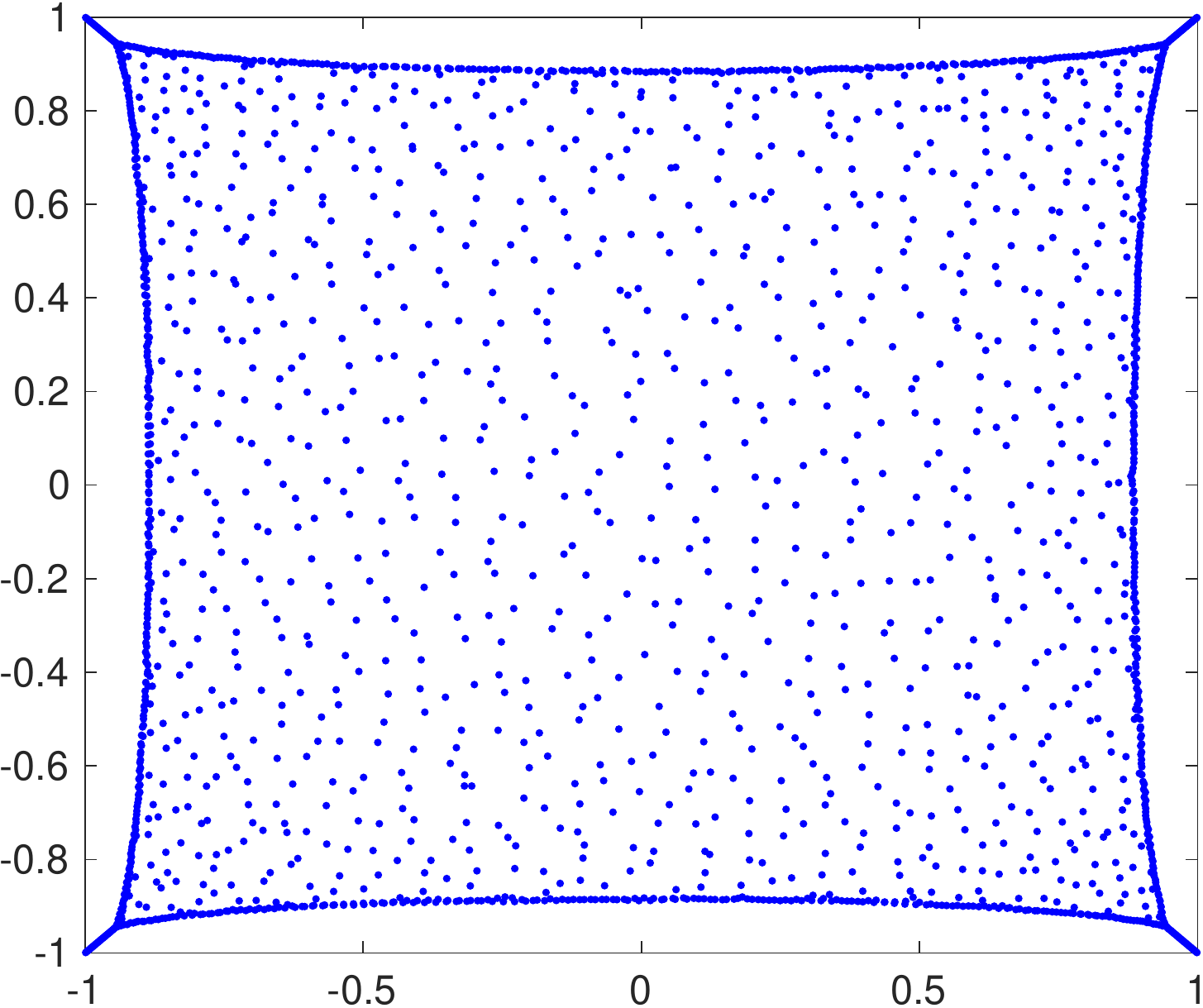}
 \end{minipage}
 \caption{The left hand side shows the spectrum of the unperturbed Scottish flag operator $p_N$ \eqref{ii3.2}, 
 	      and the right hand side shows the spectrum of a $p_N+\delta Q_N$, for $N=1000$, $Q_N$ a complex 
	      Gaussian random matrix and $\delta =10^{-12}$.}
  \label{fig1}
\end{figure}
In the recent paper \cite{ZwChrist10} Christiansen and Zworski established a Weyl law for the 
expected number of eigenvalues of small Gaussian random perturbations of $p_N$. They proved 
\begin{thm}[\cite{ZwChrist10}]
	Suppose that $f\in C^{\infty}(\T^{2d})$, and that $\Omega$ is a simply connected open set 
	with a smooth boundary, $\partial \Omega$, such that for all $z$ in a neighbourhood of 
	$\partial\Omega$, 
	\begin{equation}\label{ii3}
		\mathrm{vol}_{\T^{2d}}(\{w: |f(w)-z|^2\leq t \} = \mO(t^{\kappa}), \quad 0 \leq t \ll 1, 
	\end{equation}
	with $1/2 < \kappa \leq 1$. Let $Q_{N}$ be a complex Gaussian random $N^d\times N^d$-matrix 
	with independent and identically distributed entries $ \sim \mathcal{N}_{\C}(0,1)$. Then for any 
	$p\geq p_0 > d+1/2$ 
	\begin{equation}\label{ii4}
		\erw\left( |\spec(f_N+N^{-p}Q_N)\cap \Omega|\right) 
		= 
		N^d \mathrm{vol}_{\T^{2d}}(f^{-1}(\Omega)) + \mO(N^{d-\beta}),
	\end{equation}
	for any $\beta < (\kappa -1)(\kappa+1)$.
\end{thm}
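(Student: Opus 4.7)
I would follow the now-classical strategy of representing the counting function via $\log|\det|$ and Green's formula, combined with an analysis of small singular values that exploits the Gaussian nature of the perturbation. Write $P^\delta_N\defeq f_N+N^{-p}Q_N$. For a.e.\ realisation the function $z\mapsto \log|\det(P^\delta_N-z)|$ is subharmonic on $\C$ with distributional Laplacian $2\pi\sum_{\lambda\in\spec(P^\delta_N)}\delta_\lambda$, so that for any cutoff $\chi\in C_c^\infty(\C)$ equal to $1$ on $\Omega$ and supported in a small neighbourhood of $\overline\Omega$
\begin{equation*}
|\spec(P^\delta_N)\cap\Omega|=\frac{1}{2\pi}\int_\C \log|\det(P^\delta_N-z)|\,\Delta\chi(z)\,L(dz).
\end{equation*}
Taking expectations reduces the problem to a pointwise analysis of $\erw\log|\det(P^\delta_N-z)|$ together with a control on the fluctuations of this quantity once integrated against $\Delta\chi$.

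The next step is to prove matching upper and lower bounds for $\erw\log|\det(P^\delta_N-z)|$ against the deterministic Weyl term $N^d\int_{\T^{2d}}\log|f(w)-z|\,dw$. The upper bound uses $\log|\det A|=\tfrac12\tr\log(A^*A)$, the bound $\|N^{-p}Q_N\|\lesssim N^{d/2-p}$ valid with overwhelming probability, and the standard trace/functional-calculus identity for Toeplitz quantizations,
\begin{equation*}
\tr\log\bigl(|f_N-z|^2+\eta\bigr)=N^d\!\int_{\T^{2d}}\!\log\bigl(|f(w)-z|^2+\eta\bigr)\,dw+\mO(N^{d-1}),
\end{equation*}
together with a controlled choice $\eta\downarrow 0$. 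The lower bound is harder: one must know that $P^\delta_N-z$ is unlikely to have very small singular values. Using the rotational invariance of the complex Gaussian $Q_N$, I would derive a tail estimate of the form $\prob(s_{N^d}(P^\delta_N-z)\le t)\le C(tN^p)^2 N^{Cd}$, and then integrate $-\log s_{N^d}$ against this tail to obtain
\begin{equation*}
\erw\log|\det(P^\delta_N-z)|\ge N^d\!\int_{\T^{2d}}\!\log|f(w)-z|\,dw-\mO(\log N),
\end{equation*}
uniformly in $z$ outside a set of controllably small measure.

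The main obstacle, and the place where the volume hypothesis \eqref{ii3} enters decisively, is turning these \emph{pointwise} estimates into the global error $\mO(N^{d-\beta})$ after testing against $\Delta\chi$, whose support concentrates near $\partial\Omega$. The natural strategy is to split according to distance to the critical locus $f^{-1}(\partial\Omega)$: far from it the $\mO(\log N)$ loss above is more than enough, while inside a regularised shell of width $\sim N^{-\alpha}$ only a cruder singular-value bound is available. The damage inflicted by the latter is controlled via \eqref{ii3} through $\mathrm{vol}_{\T^{2d}}\{w:\mathrm{dist}(f(w),\partial\Omega)\le N^{-\alpha}\}=\mO(N^{-2\alpha\kappa})$. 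Optimising $\alpha$ between the two regimes, combined with the Fubini step that converts pointwise fluctuations into a global bound, should yield the exponent $\beta$ announced in \eqref{ii4}. Carrying out this optimisation and discharging the uniform-in-$z$ probabilistic and integrability issues that accompany it is the most delicate technical part of the argument.
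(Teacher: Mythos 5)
Your outline follows the classical Green's-formula route — subharmonicity of $\log|\det(P^\delta_N-z)|$, Fubini, matching deterministic upper and probabilistic lower bounds on $\log|\det|$ — which is indeed close to the strategy of Christiansen--Zworski for this expectation statement. The present paper, by contrast, does not re-prove this theorem: it derives its strictly stronger Theorems \ref{thm1}--\ref{thm2} via a Grushin-problem reduction that isolates the $\mO(N^d\alpha^\kappa)$ small singular values of $(p_N-z)^*(p_N-z)$ into a finite block $E_{-+}^{\delta}$, and then applies Sj\"ostrand's zero-counting theorem (Theorem \ref{thm:Count}) on Lipschitz domains with discrete sampling points $z_j$ on $\partial\Omega$. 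That route buys probability estimates and finer error control that an expectation/Fubini argument cannot deliver, so the comparison is fair even if not literal.

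That said, there are two concrete gaps in your plan. First, the lower bound
$\erw\log|\det(P^\delta_N-z)|\ge N^d\!\int\log|f-z|\,d\rho-\mO(\log N)$
is too strong and cannot be obtained from a smallest-singular-value tail bound alone. The obstruction is precisely what the volume hypothesis is there to quantify: the unperturbed $(f_N-z)^*(f_N-z)$ has $\mO(N^d\alpha^\kappa)$ eigenvalues below $\alpha$, and even after the Gaussian perturbation these singular values are only lifted to scale $\sim\delta$, not to order $1$. Their collective contribution to $\log|\det|$ is $\mO(N^d\alpha^\kappa\log(1/\alpha))$ (this is exactly the error appearing in Proposition \ref{prop:ld1} and in \eqref{gc12.1}), which dwarfs $\log N$ and, after optimizing $\alpha$, is the quantity that sets the exponent $\beta$. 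A bound controlling only $s_{N^d}$ misses the bulk of this cost; you need either the full functional-calculus estimate \eqref{prop:ld1.3} or a Grushin decomposition that tracks all $M$ small singular values simultaneously.

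Second, your volume estimate for the tube is misstated. Hypothesis \eqref{ii3} gives $\mathrm{vol}\{|f-z|\le s\}=\mO(s^{2\kappa})$ for each fixed $z$; to bound $\mathrm{vol}\{\mathrm{dist}(f(w),\partial\Omega)\le r\}$ you must cover the one-dimensional curve $\partial\Omega$ by $\mO(r^{-1})$ disks of radius $r$, which yields $\mO(r^{-1}\cdot r^{2\kappa})=\mO(r^{2\kappa-1})$, as stated in \eqref{ke6}. Your $\mO(N^{-2\alpha\kappa})$ drops the factor $r^{-1}$ and would lead to an exponent $\beta$ that is too large. Combining the corrected tube estimate with the corrected $\mO(N^d\alpha^\kappa\log(1/\alpha))$ cost and optimizing is what actually produces the announced $\beta$, and since both of your intermediate bounds are overoptimistic, the claimed optimisation step as written would not reproduce the theorem.
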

Let us remark that the original result of \cite{ZwChrist10} is presented with $|f(w)-z|$ in \eqref{ii3} 
instead of $|f(w)-z|^2$, which then leads to $1 < \kappa \leq 2$. We modified the notation to be 
more easily comparable with the results that follow. 
\par
In Theorem \ref{thm1} and \ref{thm2} below we present a stronger result, estimating the 
probability that this asymptotic holds and providing more precise error estimates. Moreover, 
we remove the lower bound on $\kappa$ and 
simply demand it to be $>0$. Furthermore, we allow for a universal probability distribution 
in the perturbation, see Theorem \ref{thm2}. Finally, we 
remark that in our results we allow for coupling constants which may go up to the critical case of $N^{-p}$ 
with $p>d/2$ and down to being sub-exponentially small in $N$.
\\
\par
In \cite{ZwChrist10} the authors state the following 
\begin{con}[\cite{ZwChrist10}]\label{conj1}
	Suppose that \eqref{ii3} holds for all $z\in\C$ with a fixed $0<\kappa \leq 1$. Define random 
	probability measures 
	\begin{equation*}
		\mu_N = N^{-d} \sum_{\lambda \in \spec(f_N+N^{-p}Q_N)} \delta_{\lambda},
	\end{equation*}
	with $p\geq p_0 > d+1/2$. Then, almost surely 
	\begin{equation*}
		\mu_N  \rightharpoonup f_*(\sigma^n/n!),\quad N\to \infty,
	\end{equation*}
	where $\sigma = \sum_1^d d\xi_k\wedge dx_k$, $(x,\xi)\in \T^{2d}$, is the symplectic form 
	in $\T^{2d}$.
\end{con}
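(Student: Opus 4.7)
The plan is to derive Conjecture \ref{conj1} from the quantitative high-probability Weyl laws of Theorems \ref{thm1}-\ref{thm2} via a Borel--Cantelli argument combined with standard weak-convergence machinery.

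As a first step, I would reduce the almost-sure weak convergence of $\mu_N$ to $\mu := f_*(\sigma^d/d!)$ to countably many scalar convergences. Choose a countable family $\{\Omega_k\}$ of relatively compact open subsets of $\C$ with smooth boundary (say Euclidean discs with rational centres) that is convergence-determining for probability measures on $\C$ and such that $\mu(\partial\Omega_k)=0$ for every $k$. The latter is easily arranged by perturbing the radii, since $\mu$ is a finite measure on $\C$ and for each centre only countably many radii can give a boundary circle of positive $\mu$-measure. By the portmanteau theorem it then suffices to show that $\mu_N(\Omega_k)\to\mu(\Omega_k)$ almost surely for every single $k$.

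For each fixed $\Omega_k$ I apply Theorem \ref{thm2}, which should provide a bound of the form
\begin{equation*}
\prob\left( \Bigl| |\spec(f_N+N^{-p}Q_N)\cap\Omega_k| - N^d\,\mathrm{vol}_{\T^{2d}}(f^{-1}(\Omega_k)) \Bigr| \geq N^{d-\beta} \right) \leq r_k(N)
\end{equation*}
for some $\beta=\beta(\kappa)>0$ and a decay rate $r_k(N)\to 0$. If $r_k(N)$ is summable in $N$, then the Borel--Cantelli lemma and a countable union over $k$ immediately yield $N^{-d}|\spec(f_N+N^{-p}Q_N)\cap\Omega_k|\to\mathrm{vol}_{\T^{2d}}(f^{-1}(\Omega_k))$ almost surely and simultaneously in $k$, which by the preceding paragraph closes the argument.

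The main obstacle is therefore to obtain a summable tail $r_k(N)$. With only the bounded fourth moment hypothesis on the entries of $Q_N$, the concentration underlying Theorem \ref{thm2} is typically polynomial rather than sub-exponential, and summability is not automatic. The expected remedy is a truncation: replace each entry of $Q_N$ by its restriction to a disc of radius $N^{\eta}$ with $\eta$ chosen large enough that Markov's inequality (applied with the fourth moment) makes the probability of modification summable in $N$. On the truncated event the entries are bounded, which unlocks sub-Gaussian tools such as Talagrand's inequality or matrix Hoffman--Wielandt-type bounds for singular values, and this in turn yields a summable $r_k(N)$. Carrying out this truncation uniformly in $\Omega_k$, and checking that the truncated matrix still feeds into Theorem \ref{thm2}, is the central technical step; once it is available, the Borel--Cantelli pipeline above closes Conjecture \ref{conj1}.
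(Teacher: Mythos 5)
Your overall architecture (quantitative Weyl law on a countable family of nice sets, then Borel--Cantelli, then standard weak-convergence approximation) is the same as the paper's, and your disc/portmanteau reduction is an acceptable variant of the paper's squares-plus-rational-step-functions argument. The genuine problem is the last paragraph, where you declare that the polynomial tail of Theorem \ref{thm2} is not summable and that the ``central technical step'' is a truncation of the entries to unlock sub-Gaussian concentration. This misdiagnoses the situation and leaves the actual proof unfinished. In the regime of Conjecture \ref{conj1} one has $\delta=N^{-p}$ with $p>d+1/2$, i.e.\ $\delta_0=p-d/2>(d+1)/2\geq 1$, and the failure probability in Theorem \ref{thm2} is $\mO(r^{-1})N^{-(1-\tau)\delta_0}$ with $\tau\in[0,1[$ and $r$ at your disposal. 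Choosing, say, $r=N^{-\tau\kappa/2}$ and $\tau>0$ small enough that $\tau\kappa/2-(1-\tau)\delta_0<-1$ makes the tail summable in $N$ with no modification of the random matrix ensemble whatsoever; simultaneously $\varepsilon/r\to0$ and $r^{\kappa}\to0$, and the boundary term $\int_{p_0^{-1}(\partial\Omega+D(0,r))}d\rho$ tends to $0$ because $p_0^{-1}(\partial\Omega)$ has Lebesgue measure zero (Morse--Sard plus regularity of Lebesgue measure, using that \eqref{i3} holds for all $z$), a point you also need and did not address. This parameter tuning is precisely how the paper closes the argument (it is exactly why the paper obtains almost sure convergence only for $\delta_0>1$ and merely convergence in probability for $\delta_0\leq1$).

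Beyond being unnecessary, your proposed remedy would not deliver what you want: the polynomial decay in Theorem \ref{thm2} does not come from a lack of sub-Gaussian tails in the entries but from the Tao--Vu least-singular-value estimate (and the Markov bound on $\|Q_\omega\|$), so truncating the entries does not upgrade the rate to sub-exponential; moreover truncation perturbs the mean and variance, so one would have to re-derive Theorem \ref{thm2} for the modified ensemble, which you do not do. Since you explicitly defer this (misidentified) step, the proposal as written does not yet prove the conjecture; replacing that paragraph by the explicit choice of $\tau$ and $r$ above, together with the Morse--Sard treatment of the boundary term, completes it along essentially the paper's lines.
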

We prove this conjecture, see Corollary \ref{cor1} below, for general random matrix ensembles, 
and coupling constants $\delta =N^{-p}$, $p>d/2+1$. When $d/2 +1 \geq p>d/2$ we show that 
the convergence still holds in probability.
\section{Main result}\label{main}
We are interested in the Toeplitz quantization of smooth functions on the $2d$-dimensional torus 
$\T^{2d} = \R^{2d}/\Z^d$. This is related to the more general Berezin-Toeplitz quantization 
of compact symplectic K\"ahler manifolds, see \cite{BoUr03} or for instance \cite{Fl18} for an introduction.  
A \emph{symbol} $p\in C^{\infty}(\T^{2d})$ can be identified 
with a smooth periodic function on $\R^{2d}$. Hence $p$ is in the 
symbol class $S(1)$, i.e. the class of smooth functions $a\in C^{\infty}(\R^{2d})$ such that 
 for any $\beta\in \N^{2d}$ there exists a constant $C_{\beta}>0$ 
such that 
\begin{equation}\label{i1}
		|\partial^{\beta} a(\rho )| \leq C_{\beta}.
\end{equation}
We let $h\in ]0,1]$ denote the semiclassical parameter. A symbol $a\in S(1)$ may 
depend on $h$, in which case we demand that the constants in the estimates \eqref{i1} are uniform  
with respect to $h$. The $h$-\emph{Weyl quantization} of such a symbol $a$ is given by 
the linear operator
\begin{equation*}
	a^w(x,hD_x;h) u (x) \defeq \frac{1}{(2\pi h)^d}
	\iint \e^{\frac{i}{h}\xi(x-y) }a\left(\frac{x+y}{2}, \xi ;h\right) u(y)dyd\xi, 
\end{equation*}
acting on a Schwartz function $u\in \cS(\R^d)$. Here, the integral with respect to $\xi$ is 
to be seen as an oscillatory integral. The operator $a^w$ is a continuous linear map  
$\cS\to \cS$, $\cS'\to \cS'$ and a bounded linear map $L^2\to L^2$, see 
for instance in \cite{DiSj99,Ma02,Zw12}.
\\
\par
We denote by $\mathcal{H}_h^d$ the space of tempered distributions 
$u\in\cS'(\R^d)$ which are $\Z^d$-translation invariant in position and in frequency, 
more precisely 
\begin{equation*}
	u(x+n) = u(x), \quad \mathcal{F}_h(u)(\xi + n) = \mathcal{F}_h(u)(\xi), 
	\quad \forall n \in\Z^d. 
\end{equation*}
Here $\mathcal{F}_h$ denotes the semiclassical Fourier transform, see \eqref{qt2} below 
for a definition. 
The space $\mathcal{H}_h^d$ is $\neq \{0\}$ if and only if $h = 1/(2\pi N)$, for some $\N\ni N>0$, in which 
case $\dim \mathcal{H}_h^d = N^d$, and we can identiy $\mathcal{H}_h^d \simeq \ell^2(\Z^d/ N\Z^d) 
\simeq \C^{N^d}$. 
\\
\par
When $p\in C^{\infty}(\T^{2d})$, possibly $h$ dependent in the above sense, then $p^w(x,hD_x)$ 
maps $\mathcal{H}_h^d$ into itself, and the restriction 
\begin{equation*}
	p_N \defeq p^w(x,hD_x)\!\upharpoonright_{\mathcal{H}_{h}^d } :
	\mathcal{H}_h^d \longrightarrow \mathcal{H}_h^d, \quad h = \frac{1}{2\pi N},
\end{equation*}
defines a quantization 
\begin{equation*}
	C^{\infty}(\T^{2d}) \ni p \mapsto p_N \in \mathcal{L}(\C^{N^d},\C^{N^d}).
\end{equation*}
The matrix elements of $p_N$ (see \eqref{qf8} below for details) are given by 
 \begin{equation*}
	(p_N)_{m,j} = \sum_{n,r\in \Z^d}\widehat{p}(n,j-m-rN) \,\e^{\frac{i\pi}{N}(j+m)\cdot n}(-1)^{n\cdot r}, 
	\quad m,j \in (\Z/N\Z)^d.
\end{equation*}
where $\widehat{p}$ is the Fourier transform of $p$. 
\\
\par
Let $h = 1/(2\pi N)$, $\N\ni N>0$, and suppose that for $p\in C^{\infty}(\T^{2d}) $ 
there exist $ p_{\nu}\in C^{\infty}(\T^{2d}) $, $\nu\in\N$, so that 
\begin{equation}\label{i2.1}
	p(\rho;h) \sim p_0(\rho) + h p_1(\rho) + \dots \text{ in } S(1), 
\end{equation}
meaning that  $p - \sum_0^M h^{\nu}p_\nu \in h^{M+1} S(1)$ for all $M\in \N$. We 
call $p_0$ the \emph{principal symbol} of $p$. 
\\
\par
The aim of this paper is to study the eigenvalue distribution of 
$p_N+ \delta Q_{\omega}$ for $\delta$ in a suitable range and 
for $Q_{\omega}$ in a suitable ensemble of $N^d\times N^d$ random 
matrices. 
\\
\par
Let $\Omega\Subset \C$ be an open relatively compact simply connected set 
with a uniformly Lipschitz boundary $\partial \Omega$, see Section \ref{sec:LipBD} 
below for a precise definition. For $z$ in 
a neighbourhood of $\partial\Omega$ (denoted by $\text{neigh}(\partial \Omega)$) 
and $0\leq  t \ll1 $ we set 
\begin{equation}\label{i2}
		V_z(t) = \mathrm{Vol}\{\rho \in \T^{2d}; |p_0(\rho) -z |^2 \leq t \}.
\end{equation}
We suppose that 
\begin{equation}\label{i3}
		\exists \kappa \in ]0,1], \text{ such that } V_z(t) = \mO(t^{\kappa}), 
		\text{ uniformly for } z \in \text{neigh}(\partial \Omega), ~ 0 \leq t \ll 1.
\end{equation}
The first result concerns the case of a perturbation by a complex Gaussian 
random matrix. 
\begin{thm}\label{thm1}
Let $p\in C^{\infty}(\T^{2d})$ satisfy \eqref{i2.1} and let $N\geq 2$. 
Let $\Omega\Subset \C$ be an open relatively compact simply connected set 
with a uniformly Lipschitz boundary $\partial \Omega$, so that \eqref{i3} holds. 
Let $Q_{\omega}$ be a complex Gaussian random $N^d\times N^d$-matrix with independent 
and identically distributed entries, i.e. 
\begin{equation}\label{t0}
	Q_{\omega} = (q_{i,j}(\omega))_{1\leq i,j\leq N^d}, \quad 
	q_{i,j}(\omega) \sim \mathcal{N}_{\C}(0,1) ~(iid).
\end{equation}
Let $N^{-1} \ll \alpha \ll 1$, let $C>0$ be sufficiently large, and let  
\begin{equation}\label{t1}
\varepsilon \gg  \alpha^{\kappa}\log \frac{CN^{d/2}} {\delta\alpha^2} + \delta N^{\frac{d}{2}}\alpha^{-1/2}, 
	\quad 0<\delta \ll N^{-d/2}\alpha^{1/2}.
\end{equation}
Then, 
\begin{equation*}
		\bigg|\#(\spec(p_N +\delta Q_{\omega})\cap \Omega) - {N^d} \int_{p_0^{-1}(\Omega)} d\rho \bigg| 
		\leq \mO(N^d) \left( \int_{p_0^{-1}(\partial\Omega+D(0,r))}d\rho
		 + \frac{\varepsilon}{r}
		 + r^{\kappa}\right)
\end{equation*}
for $0 < r \ll 1$, with probability 
\begin{equation*}
	\geq 1 -  \mO(r^{-1}) \left( N^{d/2}  \alpha^{-2}\delta^{-1}
	 \exp\left(\alpha^{-\kappa}(2\delta N^{\frac{d}{2}}\alpha^{-1/2} - \varepsilon/C)\right) +\e^{-N^d}\right).
\end{equation*}
\end{thm}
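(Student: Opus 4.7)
The plan is to follow the logarithmic-potential strategy of Sj\"ostrand, as adapted to the Toeplitz setting in \cite{ZwChrist10}. Writing $P_z^\omega := p_N + \delta Q_\omega - z$ and $\phi_\alpha(z) := \frac{1}{2}\int_{\T^{2d}}\log(|p_0(\rho)-z|^2+\alpha^2)\,d\rho$, I reduce the eigenvalue count to an integral of the subharmonic function $z \mapsto \log|\det P_z^\omega|$ against $\Delta\chi$, where $\chi \in C_c^\infty(\C)$ equals $1$ on $\Omega$ and vanishes outside an $r$-neighbourhood of $\Omega$. Since the Riesz mass of $\log|\det P_z^\omega|$ equals $2\pi$ times the eigenvalue-counting measure, Green's identity yields
$$\#(\spec(p_N+\delta Q_\omega) \cap \Omega) = \frac{1}{2\pi}\int \log|\det P_z^\omega|\,\Delta\chi(z)\,dL(z) + \mO\!\Big(N^d\!\int_{p_0^{-1}(\partial\Omega+D(0,r))}\!d\rho\Big),$$
the boundary error being controlled by \eqref{i3}. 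Applying Green's identity a second time and using $\Delta_z\log|w-z| = 2\pi\delta_w$ shows that once $\log|\det P_z^\omega|$ is replaced by $N^d\phi_\alpha(z)$ on $\text{neigh}(\partial\Omega)$, the main term becomes $N^d\int_{p_0^{-1}(\Omega)}d\rho$ up to an $\mO(N^d(r^\kappa + \alpha^{2\kappa}))$ error.

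For the deterministic upper bound I would use the inequality $\log|\det A| \leq \frac{1}{2}\tr\log(A^*A + \alpha^2)$ and evaluate the right-hand side via the Berezin--Toeplitz trace formula applied to the self-adjoint Toeplitz operator $(p_N-z)^*(p_N-z)$, treating $\delta Q_\omega$ perturbatively and optimizing the $\alpha$-dependence using \eqref{i3}. On the event $\{\|Q_\omega\| \leq CN^{d/2}\}$, which by standard Gaussian concentration has probability $\geq 1 - \e^{-N^d}$, this produces $\log|\det P_z^\omega| \leq N^d\phi_\alpha(z) + \mO(N^d\delta N^{d/2}\alpha^{-1/2})$ plus semiclassical corrections, and accounts for the $\e^{-N^d}$ term in the final probability.

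The crux is the probabilistic lower bound: with high probability, $\log|\det P_z^\omega| \geq N^d\phi_\alpha(z) - \varepsilon N^d$ uniformly for $z$ in an $r$-net $\Gamma \subset \text{neigh}(\partial\Omega)$. Since $\log|\det P_z^\omega| = \sum_j \log s_j(P_z^\omega)$, this amounts to preventing the small singular values of $P_z^\omega$ from being too small. For the Gaussian shift $P_z^\omega = (p_N-z) + \delta Q_\omega$ I would apply a Markov-type bound at a moment of order $\alpha^{-\kappa}$ to the smoothed determinant $\det((P_z^\omega)^*P_z^\omega + \alpha^2)^{1/2}$, producing precisely the factor $\exp(\alpha^{-\kappa}(2\delta N^{d/2}\alpha^{-1/2} - \varepsilon/C))$ visible in the theorem, where the $2\delta N^{d/2}\alpha^{-1/2}$ term is the perturbative correction to $N^d\phi_\alpha(z)$ coming from the random matrix. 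A union bound over the $\mO(r^{-1})$ grid points, combined with an interpolation argument that transfers these pointwise bounds to all of $\text{neigh}(\partial\Omega)$ at the cost of $\mO(\varepsilon/r)$, upgrades the pointwise estimate to the required uniform one. The principal obstacle is calibrating the exponent $\alpha^{-\kappa}$: the naive Gaussian bound $\prob(s_{\min}<t) \lesssim t^2$ is only polynomial in $t$, and promoting it to an exponential in $\alpha^{-\kappa}$ requires a careful interplay between \eqref{i3} and the regularization scale $\alpha$, forcing the balance $\alpha \gg N^{-1}$, $\delta \ll N^{-d/2}\alpha^{1/2}$, $\varepsilon \gg \alpha^\kappa \log(\cdots)$ of \eqref{t1}.
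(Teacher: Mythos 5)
Your blueprint---deterministic upper bound on a thickened boundary, probabilistic lower bound at grid points, zero-counting via subharmonicity of $\log|\det(P^\delta - z)|$---matches the paper's overall strategy, and the deterministic upper bound step (trace formula for $(p_N-z)^*(p_N-z)$, perturbative handling of $\delta Q_\omega$ on the event $\|Q_\omega\|\leq CN^{d/2}$) is in the right spirit. But two of your key steps do not close the argument as stated.

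First, the counting step. You invoke Green's identity against $\Delta\chi$ and claim the remainder after replacing $\sum_\lambda \chi(\lambda)$ by $\#(\spec\cap\Omega)$ is $\mO(N^d\int_{p_0^{-1}(\partial\Omega + D(0,r))}d\rho)$, controlled by \eqref{i3}. But \eqref{i3} bounds a symbol-level volume, not the number of eigenvalues of the \emph{perturbed} operator in the annulus $\mathrm{supp}\,\Delta\chi$, which is what $\sum_\lambda(\mathbf{1}_\Omega(\lambda) - \chi(\lambda))$ actually measures. To control that, one needs a separate local zero-counting argument (Jensen-type on small discs covering $\partial\Omega$) comparing $\log|\det|$ with the subharmonic $N^d\phi$; this is precisely what Theorem 1.1 of \cite{Sj09b} (recalled in Section \ref{sec:c1} as Theorem \ref{thm:Count}) packages, producing the error $\mu(\widetilde{\omega}_r) + N_0\varepsilon + N_0 r^{1+\kappa}$. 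Your ``interpolation argument at cost $\mO(\varepsilon/r)$'' is an informal stand-in for this and needs to be made a genuine lemma.

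Second, and more seriously, the probabilistic lower bound. The $\alpha^{-\kappa}$ in the exponent of the failure probability is \emph{not} produced by a high-moment Markov inequality on the determinant: it comes from the fact that only $M = \mO(N^d\alpha^\kappa)$ singular values of $p_N - z$ lie below $\sqrt{\alpha}$, which is the content of \eqref{i3} through the functional calculus (Proposition \ref{prop:ld1} and \eqref{gp4.0}). The paper isolates these $M$ directions with a Grushin problem (Section \ref{sec:GP}), reducing the lower bound on $\log|\det(P^\delta - z)|$ to a lower bound on the $M\times M$ determinant $|\det E_{-+}^\delta|$; then the polynomial Sankar--Spielmann--Teng bound on $s_{N^d}(P^\delta - z)$ (Theorem \ref{thm:SSV}), transported to $s_M(E_{-+}^\delta)$ via Lemma \ref{gpp:lem1}, gives $\log|\det E_{-+}^\delta| \geq -CM\log(t\delta)^{-1}$, and solving $\varepsilon\sim\alpha^\kappa\log(t\delta)^{-1}$ for $t$ produces the exponential. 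Your ``Markov-type bound at a moment of order $\alpha^{-\kappa}$'' skips this reduction: absent the Grushin step (or an equivalent mechanism that singles out the $M$ dangerous directions and is stable under the $\delta Q_\omega$ perturbation), a smallest-singular-value bound yields only $\log|\det(P^\delta-z)| \geq N^d\log s_{N^d}$, which is off by the full factor $N^d/M = \alpha^{-\kappa}$ and is too lossy. Filling in that reduction is the technical heart of the proof, and your proposal does not contain it.
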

This Weyl law shows that the eigenvalues of the small random perturbations of 
$p_N$ roughly equidistribute in $\Sigma = p_0(\T^{2d})$ the numerical range of 
the principal symbol of the operator $p_N$. This is illustrated in Figure \ref{fig1}.  
In Corollaries \ref{cor2}, \ref{cor2.2}, 
\ref{cor2.1}, \ref{cor3}, below we provide some special cases of Theorem \ref{thm1}. 
\par
A similar result to Theorem \ref{thm1} for 
semiclassical pseudo-differential operators on $\R^{d}$ has been proven by 
Hager \cite{Ha06} and Hager-Sj\"ostrand \cite{HaSj08}. Condition \eqref{i3} appears 
as well in the works of Christiansen-Zworski \cite{ZwChrist10} and Hager-Sj\"ostrand \cite{HaSj08}, 
and is used there, as well as in our work, to control the number of small singular 
values of $(p_N-z)^*(p_N-z)$ for $z$ in a neighbourhood of $\partial\Omega$. 
\par
As observed in \cite{ZwChrist10} 
for real analytic $p$ condition \eqref{i3} always holds for some $\kappa >0$. Similarly, when $p$ is 
real analytic and such that $p(\T^{2d})\subset\C$ has non-empty interior, 
then 
\begin{equation}\label{ke1}
	\forall z\in\partial\Omega: ~
	dp \!\upharpoonright_{p^{-1}(z)} \neq 0 \quad \Longrightarrow \quad 
	\eqref{i2} \text{ holds with } \kappa > 1/2.
\end{equation}
For smooth $p\in C^{\infty}(\T^{2d})$ we have that when for every $z\in\partial\Omega$ 
\begin{equation}\label{ke2}
\begin{split}
	&dp, d\overline{p} \text{ are linearly independent at every point of } p^{-1}(z),  \\
	&\text{then }\eqref{i2} \text{ holds with } \kappa = 1.
\end{split}
\end{equation}
Observe that $dp$ and $d\overline{p}$ are linearly independent at $\rho$ when 
 $\{p,\overline{p}\}(\rho)\neq 0$, 
 where $\{a,b\} = \partial_\xi a \cdot \partial_x b -  \partial_x a \cdot \partial_\xi b$ 
denotes the Poisson bracket on $\T^{2d}$. Observe that in dimension $d=1$ 
the condition $\{p,\overline{p}\}\neq 0$ on $p^{-1}(z)$ is equivalent to 
$dp$, $d\overline{p}$ being linearly independent at every point of $p^{-1}(z)$. 
However, in dimension $d>1$ this cannot in hold general as the integral of 
$\{p,\overline{p}\}$ with respect to the Liouville measure on $p^{-1}(z)$ vanishes 
on every compact connected component of $p^{-1}(z)$, see \cite[Lemma 8.1]{MeSj02}. 
Furthermore, condition \eqref{ke2} cannot hold 
when $z\in \partial\Sigma$, $\Sigma = p_0(\T^{2d})$. However, some iterated Poisson 
bracket may not be zero there. For example, it was observe in \cite[Example 12.1]{HaSj08} 
that 
\begin{equation}\label{ke3}
	\forall \rho \in p^{-1}(\partial \Omega): ~\{p,\overline{p}\}(\rho) \neq 0 \text{ or }
	\{p,\{p,\overline{p}\}\} (\rho) \neq 0, 
	\text{ then }
	\eqref{i2} \text{ holds with } \kappa = \frac{3}{4}.
\end{equation}
The Poisson bracket has another important role in this context: the link to 
\emph{spectral instability}. Indeed 
\begin{equation}\label{ke4}
	\{p,\overline{p}\}(\rho_0) \neq 0, ~ \rho_0 = p(z_0), 
	\quad \text{then } \|(p-z_0)^{-1}\| \geq C_k N^{k}, ~\forall k>0.
\end{equation}
Moreover, the approximate eigenvector $u_N$, with $\|u_N\|_{\ell^2} =1$, also called 
a \emph{quasimode}, causing this resolvent growth at $z_0$ can be microlocalized 
at $\rho_0$, i.e. for any smooth function $\chi$ on $\T^{2d}$ vanishing near $\rho_0$, we have that 
$\| \chi_N u_N\| =\mO(N^{-\infty})$. This has been shown for $\T^2$ by 
Chapman-Trefethen \cite{TrCh04}, see also \cite{TrEm05}, and for the  
Berezin-Toeplitz quantization of compact symplectic K\"ahler manifolds 
by Borthwick-Uribe \cite{BoUr03}. In these works the condition $\{p,\overline{p}\}(\rho_0) \neq 0$ 
is usually referred to as the \emph{twist} resp. \emph{anti-twist condition},  
depending on the sign of $\{\Rea p, \Ima p\}(\rho_0)$. 
The construction of such quasimodes goes back to a classical result of 
H\"ormander concerning the local solvability of partial differential equations, 
see \cite{Zw01} and the references therein. We refer the reader also to 
\cite{Da99b,NSjZw04} for recent results.
\\
\par
In Figure \ref{fig1} we see that the eigenvalues of the perturbed operator 
do not fully reach the boundary of $\Sigma$. It is expected that there, 
under some non-degeneracy condition, such as \eqref{ke4}, we have 
improved resolvent bounds on $\|(p_N -z_0)^{-1}\|$, $z_0\in\partial \Sigma$. 
Indeed, Dencker, Sj\"ostrand and Zworski prove in \cite[Theorem 1.4]{NSjZw04} that 
under the condition that at least some  
$k$-fold iterated Poisson bracket of the real and imaginary part of the principal 
symbol $p_0$ does not vanish on $p^{-1}(z_0)$, $z_0\in\partial \Sigma$,  
the Weyl quantization of the bounded smooth symbol $p\sim p_0 + hp_1+\dots$, 
satisfies the $L^2$ resolvent bound 
\begin{equation*}
	\| (p^w -z_0)^{-1}\| \leq C h^{-\frac{k}{k+1}}.
\end{equation*}
We suspect (but have not investigated further) that the proof of \cite[Theorem 1.4]{NSjZw04} can be carried over to our situation, with $h =1/2\pi N$, which would then give an explanation of the 
absence of eigenvalues of $p_N+\delta Q_{\omega}$ in Figure \ref{fig1} 
in a small $N$-dependent neighbourhood of the boundary $\partial\Sigma$ 
(away from the edge points of the square $\Sigma$).
\\
\par
Before we turn to the case of more general perturbations, let us 
discuss some special cases of Theorem \ref{thm1}. Notice that when 
$\kappa >1/2$ then \eqref{i3} implies that 
\begin{equation}\label{ke6}
	\int_{p_0^{-1}(\partial\Omega+D(0,r))}d\rho = \mO(r^{2\kappa -1}).
\end{equation}
One can easily see that $r = \varepsilon^{\frac{1}{2\kappa}}$ minimizes (up to a constant) 
the error term in Theorem \ref{thm1}, and it becomes 
\begin{equation*}
	\mO(N^d \varepsilon^{\frac{2\kappa -1}{2\kappa}}).
\end{equation*}
Taking $\alpha=CN^{-1}$ and $\varepsilon = C_0 N^{-\kappa}(\log N )^2$, 
for some sufficiently large constants $C,C_0>1$, one obtains from Theorem \ref{thm1} 
the following 
\begin{cor}\label{cor2}
Under the assumptions of Theorem \ref{thm1}, we let $\kappa \in ]1/2,1]$ and 
set for $p \geq (d+1)/2+\kappa$  
\begin{equation*}
	\delta = \frac{1}{C} N^{-p},
\end{equation*}
for some sufficiently large $C>0$. Then, 
\begin{equation*}
		\bigg|\#(\spec(p_N +\delta Q_{\omega})\cap \Omega) - {N^d} \int_{p_0^{-1}(\Omega)} d\rho \bigg| 
		\leq \mO\!\left(N^{d-\kappa+1/2} (\log N)^{(2\kappa-1)/\kappa}\right)
\end{equation*}
with probability $\geq 1 -  \mO(N^{-\infty})$.
\end{cor}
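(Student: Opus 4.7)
The plan is to obtain Corollary \ref{cor2} as a direct specialization of Theorem \ref{thm1}, by inserting the proposed values of the parameters $\alpha,\delta,\varepsilon$ and optimizing in $r$. First I would verify that the admissibility conditions on $\alpha,\delta,\varepsilon$ in \eqref{t1} are met. With $\alpha = CN^{-1}$ the condition $\delta \ll N^{-d/2}\alpha^{1/2}\sim N^{-(d+1)/2}$ is satisfied because $\delta = C^{-1}N^{-p}$ with $p\ge (d+1)/2+\kappa$. The lower bound on $\varepsilon$ then reduces to
\begin{equation*}
	\alpha^\kappa \log\!\Big(\tfrac{CN^{d/2}}{\delta\alpha^2}\Big) + \delta N^{d/2}\alpha^{-1/2}
	= \mO\!\big(N^{-\kappa}\log N\big) + \mO\!\big(N^{-p+(d+1)/2}\big),
\end{equation*}
both of which are dominated by $\varepsilon = C_0 N^{-\kappa}(\log N)^2$ as soon as $C_0$ is taken large enough (and $p\ge (d+1)/2+\kappa$ ensures the second term is $\mO(N^{-\kappa})$).

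Next I would use the a priori estimate \eqref{ke6} on the measure of tubes around $\partial\Omega$, valid since $\kappa>1/2$, to absorb the geometric error term of Theorem \ref{thm1} into the other two, obtaining
\begin{equation*}
	\bigg|\#(\spec(p_N+\delta Q_\omega)\cap \Omega) - N^d\!\!\int_{p_0^{-1}(\Omega)}\!d\rho\bigg|
	\le \mO(N^d)\bigl(r^{2\kappa-1}+\varepsilon/r+r^{\kappa}\bigr).
\end{equation*}
A short computation shows that the choice $r = \varepsilon^{1/(2\kappa)}$ balances the first two terms (and dominates the last one for $\kappa\le 1$), yielding an error of order $N^d\varepsilon^{(2\kappa-1)/(2\kappa)}$. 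Inserting $\varepsilon = C_0 N^{-\kappa}(\log N)^2$ produces exactly $\mO\!\bigl(N^{d-\kappa+1/2}(\log N)^{(2\kappa-1)/\kappa}\bigr)$, which is the claimed error estimate.

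Finally I would control the probability. With the chosen parameters one has $\alpha^{-\kappa}\cdot 2\delta N^{d/2}\alpha^{-1/2} = \mO(1)$, while
\begin{equation*}
	\alpha^{-\kappa}\cdot \varepsilon/C = \tfrac{C_0}{C^{\kappa+1}}\,(\log N)^2,
\end{equation*}
so that taking $C_0$ sufficiently large relative to $C$ makes the exponent of the Gaussian tail in Theorem \ref{thm1} at most $-c(\log N)^2$ for some $c>0$. The polynomial prefactor $r^{-1}N^{d/2}\alpha^{-2}\delta^{-1}$ is bounded by a fixed power of $N$, so the bracket in the probability estimate becomes $\mO(N^{-\infty})$, and the $\e^{-N^d}$ term is negligible. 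This gives the stated probability $\ge 1-\mO(N^{-\infty})$.

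There is no real technical obstacle here beyond bookkeeping: the whole content of the corollary is that the choices $\alpha \sim N^{-1}$, $\varepsilon\sim N^{-\kappa}(\log N)^2$, $\delta\lesssim N^{-(d+1)/2-\kappa}$ simultaneously (i) satisfy the admissibility window \eqref{t1}, (ii) minimize the deterministic error, and (iii) drive the Gaussian tail to $N^{-\infty}$. The only subtlety requiring care is keeping the constants $C$ and $C_0$ consistent throughout the three estimates, in particular making $C_0$ large enough compared to $C^{\kappa+1}$ so that the exponent $-c(\log N)^2$ has sign and size sufficient to absorb the polynomial prefactors.
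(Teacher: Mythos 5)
Your proposal is correct and is essentially the argument the paper itself gives: the paper's proof of Corollary \ref{cor2} is precisely the two preceding remarks, namely that under \eqref{ke6} one takes $r=\varepsilon^{1/(2\kappa)}$ to get the error $\mO(N^d\varepsilon^{(2\kappa-1)/(2\kappa)})$, and then inserts $\alpha=CN^{-1}$, $\varepsilon=C_0N^{-\kappa}(\log N)^2$ into Theorem \ref{thm1}, checking \eqref{t1} and the probability bound along the way. Your verification of the admissibility window, the optimization in $r$, and the $\exp(-c(\log N)^2)=\mO(N^{-\infty})$ estimate for the Gaussian tail all match the paper's route; the only minor overstatement is that ``$C_0$ large compared to $C^{\kappa+1}$'' is not really needed, since the $(\log N)^2$ growth makes any $C_0>0$ suffice to defeat the $\mO(1)$ term and the polynomial prefactors for $N$ large.
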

Notice that at the price of increasing the error term of the eigenvalue counting 
estimate by a factor $N^{\beta}$, with $\beta \in]0,1[ $, one can obtain the above result 
with probability $\geq 1 - \e^{-N^{\beta'}/C}$, for some $\beta' \in ]0,1[$.
\\
\par
For $p_0\in]0,\kappa]$, we set $\alpha=CN^{-p_0/\kappa}$ and $\varepsilon = C_0 N^{-p_0}(\log N )^2$, 
for some sufficiently large constants $C,C_0>1$. Then, one gets from Theorem \ref{thm1} 
the following 
\begin{cor}\label{cor2.2}
Under the assumptions of Theorem \ref{thm1}, we let $\kappa \in ]1/2,1]$ and for $p_0 \in ]0,\kappa]$  
set 
\begin{equation*}
	\delta = \frac{1}{C} N^{-(d+1)/2-p_0},
\end{equation*}
for some sufficiently large $C>0$. Then, 
\begin{equation*}
		\bigg|\#(\spec(p_N +\delta Q_{\omega})\cap \Omega) - {N^d} \int_{p_0^{-1}(\Omega)} d\rho \bigg| 
		\leq \mO\!\left(N^{d-p_0(2\kappa-1)/(2\kappa)} (\log N)^{(2\kappa-1)/\kappa}\right)
\end{equation*}
with probability $\geq 1 -  \mO(N^{-\infty})$.
\end{cor}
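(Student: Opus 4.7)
The plan is to deduce the corollary by inserting the parameter choice announced in the paragraph preceding the statement—namely $\alpha = CN^{-p_0/\kappa}$, $\varepsilon = C_0 N^{-p_0}(\log N)^2$, $\delta = C^{-1} N^{-(d+1)/2 - p_0}$—directly into Theorem \ref{thm1}, combined with the improved boundary volume bound \eqref{ke6} that is available for $\kappa > 1/2$. No new mathematical ingredient is required; the work splits into (i) checking the admissibility conditions \eqref{t1}, (ii) balancing the geometric error in the auxiliary radius $r$, and (iii) verifying that the resulting failure probability is $\mO(N^{-\infty})$.

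For step (i), the hypothesis $p_0 \in ]0,\kappa]$ gives $\alpha \gg N^{-1}$ (for $C$ large) and $p_0/(2\kappa) \leq 1/2$. A direct substitution then yields $\delta N^{d/2}\alpha^{-1/2} = \mO(N^{-1/2 - p_0 + p_0/(2\kappa)}) = \mO(N^{-p_0})$ and $\alpha^\kappa\log(CN^{d/2}/(\delta\alpha^2)) = \mO(N^{-p_0}\log N)$, both comfortably dominated by $\varepsilon = C_0 N^{-p_0}(\log N)^2$ once $C_0$ is chosen large enough. The remaining condition $\delta \ll N^{-d/2}\alpha^{1/2}$ reduces to $p_0(1 - 1/(2\kappa)) > -1/2$, which is immediate since $\kappa > 1/2$. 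For step (ii), the assumption $\kappa > 1/2$ activates \eqref{ke6}, and since $\kappa \leq 1$ the term $r^\kappa$ is absorbed by $r^{2\kappa-1}$ for small $r$. The error of Theorem \ref{thm1} therefore reduces to $\mO(N^d)(r^{2\kappa-1} + \varepsilon/r)$, minimized (up to a constant) at $r = \varepsilon^{1/(2\kappa)}$ with optimum value $\varepsilon^{(2\kappa-1)/(2\kappa)} = \mO(N^{-p_0(2\kappa-1)/(2\kappa)}(\log N)^{(2\kappa-1)/\kappa})$, which is exactly the stated bound.

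For step (iii), the critical quantity in the probability estimate of Theorem \ref{thm1} is $\alpha^{-\kappa}(2\delta N^{d/2}\alpha^{-1/2} - \varepsilon/C)$. Since $\alpha^{-\kappa} = C^{-\kappa}N^{p_0}$, I obtain $\alpha^{-\kappa}\cdot 2\delta N^{d/2}\alpha^{-1/2} = \mO(N^{-1/2 + p_0/(2\kappa)}) = \mO(1)$ by $p_0 \leq \kappa$, while $\alpha^{-\kappa}\varepsilon/C$ is of order $(\log N)^2$. Choosing $C$ large enough, the parenthesis is bounded above by $-c(\log N)^2$ for some $c > 0$, whence the exponential factor is at most $N^{-c\log N}$. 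The polynomial prefactor $r^{-1}N^{d/2}\alpha^{-2}\delta^{-1}$ and the term $\e^{-N^d}$ are both negligible against this, so the probability is $1 - \mO(N^{-\infty})$. The only real obstacle in the argument is the simultaneous compatibility of the four exponents of $N$ appearing in \eqref{t1} with the optimal balance $r = \varepsilon^{1/(2\kappa)}$; this consistency is precisely what forces the range $p_0 \in ]0,\kappa]$.
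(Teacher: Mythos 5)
Your proposal is correct and follows exactly the route the paper intends: the corollary is obtained by substituting $\alpha=CN^{-p_0/\kappa}$, $\varepsilon=C_0N^{-p_0}(\log N)^2$, $\delta=C^{-1}N^{-(d+1)/2-p_0}$ into Theorem \ref{thm1}, using \eqref{ke6} and the balance $r=\varepsilon^{1/(2\kappa)}$ announced before Corollary \ref{cor2}, and checking that the exponential factor in the probability decays like $N^{-c\log N}$. Your verification of the admissibility conditions \eqref{t1} and of the exponent bookkeeping matches the paper's (implicit) computation, so nothing is missing.
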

Taking $\alpha=CN^{-1}$ and $\varepsilon = C_0 N^{\beta-\kappa}$, 
for some sufficiently large constants $C,C_0>1$, and $\beta\in]0,\kappa[$, one
 obtains form Theorem \ref{thm1} the following 
\begin{cor}\label{cor2.1}
Under the assumptions of Theorem \ref{thm1}, we let $\kappa \in ]1/2,1]$ and $\beta\in]0,\kappa[$. 
Set
\begin{equation*}
	\delta = \e^{-N^{\beta}},
\end{equation*}
then, 
\begin{equation*}
		\bigg|\#(\spec(p_N +\delta Q_{\omega})\cap \Omega) - {N^d} \int_{p_0^{-1}(\Omega)} d\rho \bigg| 
		\leq \mO\!\left(N^{d+(\beta-\kappa)(2\kappa-1)/(2\kappa)} \right)
\end{equation*}
with probability 
\begin{equation*}
	\geq 1 -  \mO\!\left(\e^{-N^{\beta}/C}\right).
\end{equation*}
\end{cor}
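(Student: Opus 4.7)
The plan is to substitute into Theorem \ref{thm1} the parameter choices announced in the paragraph preceding the statement, namely $\alpha = CN^{-1}$ and $\varepsilon = C_0 N^{\beta-\kappa}$, together with $\delta = \e^{-N^\beta}$, for suitably large constants $C, C_0 > 1$, and then to optimize the radius $r$ that still appears in the error term.

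First I would verify that $(\delta,\alpha,\varepsilon)$ lies in the admissible range \eqref{t1}. The upper bound $\delta \ll N^{-d/2}\alpha^{1/2}$ reads $\e^{-N^\beta}\ll N^{-(d+1)/2}$, which clearly holds for $N$ large. For the lower bound on $\varepsilon$, the dominant contribution is $\alpha^{\kappa}\log(CN^{d/2}/(\delta\alpha^2))$: the logarithm equals $N^{\beta}+\mO(\log N)$ since $\log\delta^{-1} = N^{\beta}$, so this term is of size $\mO(N^{\beta-\kappa})$. The second contribution $\delta N^{d/2}\alpha^{-1/2} = \e^{-N^\beta}\mO(N^{(d+1)/2})$ is exponentially small and hence negligible. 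Thus $\varepsilon = C_0 N^{\beta-\kappa}$ satisfies \eqref{t1} once $C_0$ is chosen large enough.

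Next, since $\kappa > 1/2$, the geometric inequality \eqref{ke6} yields $\int_{p_0^{-1}(\partial\Omega+D(0,r))}d\rho = \mO(r^{2\kappa-1})$. For $r\ll 1$ and $\kappa\leq 1$, one has $r^{\kappa}\leq r^{2\kappa-1}$, so the error bound in Theorem \ref{thm1} reduces to $\mO(N^d)(r^{2\kappa-1}+\varepsilon/r)$. Balancing these two terms gives the optimal choice $r\sim \varepsilon^{1/(2\kappa)}$, which produces an error of size $\mO(N^d\varepsilon^{(2\kappa-1)/(2\kappa)}) = \mO(N^{d+(\beta-\kappa)(2\kappa-1)/(2\kappa)})$, matching the claim.

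It remains to handle the probability, which is the only point requiring real care. The exponent $\alpha^{-\kappa}(2\delta N^{d/2}\alpha^{-1/2}-\varepsilon/C)$ is, to leading order, $-C^{-\kappa-1}C_0 N^{\beta}$, since the first piece is exponentially small; taking $C_0$ large gives $\exp(\cdot)\leq \e^{-A N^\beta}$ for any prescribed $A$. On the other hand, the prefactor $r^{-1}N^{d/2}\alpha^{-2}\delta^{-1}$ has the form $N^{\mO(1)}\e^{N^\beta}$ because of the $\delta^{-1}$ factor, and the residual $\e^{-N^d}$ term is negligible as $\beta<\kappa\leq 1\leq d$. The obstacle is precisely this bookkeeping: one must choose $C_0$ (and hence $A$) large enough, depending on $C$ and the implicit constants, so that the subexponential smallness of $\delta$ is more than compensated, absorbing the $\e^{N^\beta}$ and the polynomial factor into the desired bound $\mO(\e^{-N^\beta/C})$.
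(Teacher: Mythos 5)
Your proposal is correct and follows exactly the route the paper intends: substituting $\alpha=CN^{-1}$, $\varepsilon=C_0N^{\beta-\kappa}$, $\delta=\e^{-N^{\beta}}$ into Theorem \ref{thm1}, using \eqref{ke6} and the optimal choice $r=\varepsilon^{1/(2\kappa)}$ for the error term, and absorbing the $\delta^{-1}=\e^{N^{\beta}}$ prefactor into the exponential by taking $C_0$ large. The bookkeeping of the probability bound, including the negligible $\e^{-N^d}$ term, is handled correctly.
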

When $0 < \kappa \leq 1/2$, then, without any additional 
assumptions on the behaviour of the integral \eqref{ke6}, we only know that  
\begin{equation*}
	\int_{p_0^{-1}(\partial\Omega+D(0,r))}d\rho= o(1), \quad r \to 0,
\end{equation*}
since by the Morse-Sard theorem $p_0^{-1}(\partial\Omega)$ has Lebesgue 
measure $0$, so the regularity of the Lebesgue measure shows the 
above convergence. In this situation, the best we can have is an 
error term of order 
\begin{equation*}
	o(N^d).
\end{equation*}
Similarly to Corollary \ref{cor2}, \ref{cor2.1}, we get that 
\begin{cor}\label{cor3}
Under the assumptions of Theorem \ref{thm1}, we let $\kappa \in ]0,1]$ and for $p > d/2$  
set
\begin{equation*}
	\delta = \frac{1}{C} N^{-p},
\end{equation*}
for some sufficiently large $C>0$. Then, 
\begin{equation}\label{ee1}
		\bigg|\#(\spec(p_N +\delta Q_{\omega})\cap \Omega) - {N^d} \int_{p_0^{-1}(\Omega)} d\rho \bigg| 
		\leq o(N^d)
\end{equation}
with probability $\geq 1 -  \mO(N^{-\infty})$. Moreover, when 
\begin{equation*}
	\delta = \e^{-N^{\beta}}, \quad  \text{for some } \beta\in]0,\kappa[,
\end{equation*}
then \eqref{ee1} holds with probability $\geq 1 -  \mO(\e^{-N^{\beta}/C})$.
\end{cor}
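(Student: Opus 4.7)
The plan is to deduce Corollary \ref{cor3} from Theorem \ref{thm1} by selecting $\alpha$, $\varepsilon$, $r$ as functions of $N$, much as in Corollaries \ref{cor2} and \ref{cor2.1}, but replacing the quantitative estimate \eqref{ke6} (which required $\kappa>1/2$) by a qualitative argument for the boundary integral. The crucial input is the observation already noted just above the statement: by the Morse--Sard theorem $p_0^{-1}(\partial\Omega)$ has Lebesgue measure zero, hence by outer regularity of the Lebesgue measure
$$\int_{p_0^{-1}(\partial\Omega+D(0,r))}d\rho \to 0, \quad r\to 0.$$

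For the polynomial coupling $\delta = N^{-p}/C$, $p > d/2$, I would set $\alpha = N^{-\eta}$ with $\eta>0$ small enough that $\eta<1$ and $\eta < (2p-d)/(2\kappa+1)$. These two bounds ensure $N^{-1}\ll\alpha\ll 1$, the upper bound $\delta\ll N^{-d/2}\alpha^{1/2}$ in \eqref{t1}, and the strengthened bound $\delta N^{d/2}\alpha^{-1/2}\ll \alpha^\kappa$. Then I would choose $\varepsilon = C_0\alpha^\kappa\log N$ with $C_0$ large, so that the lower bound on $\varepsilon$ in \eqref{t1} holds and $\varepsilon\to 0$, and finally $r=\varepsilon^{1/2}$, giving $\varepsilon/r\to 0$ and $r^\kappa\to 0$. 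Combined with the qualitative convergence of the boundary integral, Theorem \ref{thm1} then yields the $o(N^d)$ bound. For the probability, the exponent $\alpha^{-\kappa}(2\delta N^{d/2}\alpha^{-1/2}-\varepsilon/C)$ equals $-(C_0/C)\log N + \mO(1)$, giving $\mO(N^{-C_0/C})$ for the exponential; the polynomial prefactor $r^{-1}N^{d/2}\alpha^{-2}\delta^{-1}$ is absorbed by taking $C_0$ large, leaving probability $1-\mO(N^{-\infty})$.

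For the sub-exponential coupling $\delta = \e^{-N^\beta}$, $\beta\in ]0,\kappa[$, the logarithmic factor in \eqref{t1} is of size $N^\beta$, so I would instead set $\varepsilon = C_0\alpha^\kappa N^\beta$. Requiring $\varepsilon\to 0$ forces $\alpha\ll N^{-\beta/\kappa}$, which is compatible with $\alpha\gg N^{-1}$ precisely because $\beta<\kappa$; a convenient choice is $\alpha = N^{-(\beta+\eta')/\kappa}$ for a small $\eta'\in ]0,\kappa-\beta[$. The remaining analysis is identical, except that the exponent in the probability bound is now of order $-N^\beta$, yielding $1-\mO(\e^{-N^\beta/C})$.

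The only real subtlety, and the reason the error is merely $o(N^d)$ rather than a concrete power of $N$, is the absence of a rate in the qualitative convergence of the boundary integral when $\kappa\le 1/2$: one knows the integral vanishes but not at what speed. All other steps are bookkeeping to balance $\varepsilon/r$, $r^\kappa$, and the probability estimate against the constraints in Theorem \ref{thm1}.
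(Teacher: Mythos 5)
Your overall route is exactly the one the paper intends (it only sketches it): choose $\alpha,\varepsilon,r$ as in Corollaries \ref{cor2}, \ref{cor2.1}, and replace the quantitative bound \eqref{ke6} by the qualitative Morse--Sard/regularity argument for $\int_{p_0^{-1}(\partial\Omega+D(0,r))}d\rho\to 0$. Your parameter bookkeeping for the constraints in \eqref{t1} is correct in both regimes (in particular $\eta<(2p-d)/(2\kappa+1)$ does give $\delta N^{d/2}\alpha^{-1/2}\ll\alpha^{\kappa}$), and the sub-exponential case $\delta=\e^{-N^\beta}$ with $\varepsilon=C_0\alpha^{\kappa}N^{\beta}$, $\alpha=N^{-(\beta+\eta')/\kappa}$ goes through as you describe, since there the prefactor $\delta^{-1}=\e^{N^{\beta}}$ is absorbed by taking $C_0>C$, leaving $1-\mO(\e^{-N^{\beta}/C})$.

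There is, however, a genuine flaw in the probability claim for the polynomial coupling. With $\varepsilon=C_0\alpha^{\kappa}\log N$ and a \emph{fixed} $C_0$, the exponential factor in Theorem \ref{thm1} is $\exp\bigl(\alpha^{-\kappa}(2\delta N^{d/2}\alpha^{-1/2}-\varepsilon/C)\bigr)=\mO(N^{-C_0/C})$, so after multiplying by the polynomial prefactor $r^{-1}N^{d/2}\alpha^{-2}\delta^{-1}=\mO(N^{A})$ the failure probability is $\mO(N^{A-C_0/C})$: a fixed polynomial rate, not $\mO(N^{-\infty})$. ``Taking $C_0$ large'' only improves the exponent, it does not make the bound superpolynomial for a single choice of parameters, and the event itself (through $\varepsilon$ and $r$) depends on $C_0$, so you cannot simply quantify over $M$ afterwards without an extra diagonalization. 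The fix is the one the paper already uses in Corollary \ref{cor2}: take $\varepsilon=C_0\alpha^{\kappa}(\log N)^2$ (any factor growing faster than $\log N$ while keeping $\varepsilon\to0$ works), so the exponential term is $\e^{-c(\log N)^2}=\mO(N^{-\infty})$, which swallows the polynomial prefactor; all your other estimates are unaffected, and the error term remains $o(N^d)$.
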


The next result concerns the case of a perturbation by an iid matrix. 
\begin{thm}\label{thm2}
Let $p \in C^{\infty}(\T^{2d})$ satisfy \eqref{i2.1}.  
Let $\Omega\Subset \C$ be an open relatively compact simply connected set 
with a uniformly Lipschitz boundary $\partial \Omega$, so that \eqref{i3} holds. 
Let $Q_{\omega}$ be a random $N^d\times N^d$-matrix whose entries are independent 
copies of a random variable $q$ satisfying
\begin{equation*}
	\erw[ q ] =0, \quad  \erw[|q|^2] =1, \quad \erw[ |q|^4] < +\infty.
\end{equation*}
For $\delta_0 >0$ and some sufficiently large $C>0$, let 
\begin{equation*}
	\delta  = \frac{1}{C} N^{ -d/2 - \delta_0 },
\end{equation*}
and for $\tau \in [0,1[$, set 
\begin{equation*}
		\varepsilon =  N^{-\min(\delta_0,1)\tau \kappa}\log N+N^{-\tau \delta_0 /2},
\end{equation*}
Then, 
\begin{equation*}
	\bigg|\#(\spec(p_N+\delta Q_{\omega})\cap \Omega) - {N^d} \int_{p_0^{-1}(\Omega)} d\rho \bigg| 
	    \leq \mO(N^d) \left(
		\int_{p_0^{-1}(\partial\Omega+D(0,r))}d\rho+ \frac{\varepsilon}{r} +r^{\kappa}
		\right),
\end{equation*}
for $0 < r \ll 1$, with probability 
\begin{equation*}
	\geq  1 -  \mO(r^{-1})N^{-( 1-\tau) \delta_0}.
\end{equation*}
\end{thm}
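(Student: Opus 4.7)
My plan is to follow the same analytic framework as for Theorem \ref{thm1}, relating the eigenvalue counting in $\Omega$ to a contour integral of $\log|\det(p_N + \delta Q_\omega - z)|$ and comparing it to the deterministic main term $N^d \int_{\T^{2d}} \log|p_0(\rho) - z|\, d\rho$. Using Jensen's formula (equivalently the Riesz representation of the subharmonic $\log|\det(p_N+\delta Q_\omega - \cdot)|$) on a domain slightly fattening $\Omega$, one writes
\begin{equation*}
  \#(\spec(p_N+\delta Q_\omega)\cap \Omega)
  = \frac{1}{2\pi}\int \Delta \phi(z)\,\log|\det(p_N + \delta Q_\omega - z)|\, L(dz)
\end{equation*}
up to smoothing errors, for a suitable mollification $\phi$ of $\mathbf 1_\Omega$; the uniform Lipschitz regularity of $\partial \Omega$ converts those errors into the boundary term $\int_{p_0^{-1}(\partial\Omega + D(0,r))} d\rho$. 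The proof thus reduces to showing, for $z \in \text{neigh}(\partial\Omega)$ and with the claimed probability, the two-sided estimate
\begin{equation*}
  \Bigl| \tfrac{1}{N^d}\log|\det(p_N+\delta Q_\omega - z)| - \int_{\T^{2d}} \log|p_0(\rho) - z|\,d\rho\Bigr| \leq \varepsilon.
\end{equation*}

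The upper bound is essentially deterministic once $\|Q_\omega\|$ is controlled. By Weyl/Horn it suffices to bound $\tfrac{1}{2N^d}\tr \log((p_N+\delta Q_\omega-z)^*(p_N+\delta Q_\omega-z))$; we split this integral into the contribution of singular values above a threshold $\alpha$, handled by the semiclassical Berezin--Toeplitz functional calculus of $(p_N-z)^*(p_N-z)$ plus a $\delta \|Q_\omega\|$ perturbation, and the contribution of small singular values, bounded using the volume hypothesis \eqref{i3}. The operator norm $\|Q_\omega\| = \mO(N^{d/2})$ holds with overwhelming probability by the Bai--Yin theorem, whose validity under only a bounded fourth moment is the reason for the moment hypothesis. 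This produces the logarithmic piece $N^{-\min(\delta_0,1)\tau\kappa}\log N$ of $\varepsilon$.

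The lower bound is the crux and the main obstacle. It requires preventing $|\det(p_N+\delta Q_\omega - z)|$ from being much smaller than $\prod s_j$ suggests, and therefore controlling the smallest singular values of $p_N + \delta Q_\omega - z$. In the Gaussian setting of Theorem \ref{thm1} one integrates explicitly; under only a fourth moment this density is unavailable, and the strategy I would adopt is a Markov-type smallest-singular-value bound of Rudelson--Vershynin / Tao--Vu / Sankar--Spielman--Teng flavour valid for heavy-tailed iid ensembles perturbing a fixed matrix, producing a polynomial tail roughly of the form
\begin{equation*}
  \prob\!\left(s_{\min}(p_N+\delta Q_\omega - z) \leq t\right) \lesssim \frac{t\, N^{d/2}}{\delta}.
\end{equation*}
With $\delta = C^{-1} N^{-d/2-\delta_0}$ and the threshold $t$ tied to $\varepsilon$, this polynomial tail matches the factor $N^{-(1-\tau)\delta_0}$ in the announced probability bound, with the parameter $\tau$ precisely encoding the trade-off between the smallness of $\varepsilon$ and of the failure probability. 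The intermediate singular values are controlled by Weyl's interlacing relative to $p_N - z$ together with the operator norm bound on $\delta Q_\omega$.

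Finally, to pass from fixed $z$ to the integral over $\text{neigh}(\partial\Omega)$, I would apply the singular-value estimate on an $\mO(r)$-net of the neighbourhood and use the Lipschitz dependence of $\log|\det|$ in $z$ (controlled again by $\|Q_\omega\|$); the net cardinality of order $r^{-1}$ yields the $r^{-1}$ factor in the probability, after which optimization in $r$ recovers the statement.
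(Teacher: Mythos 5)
Your outline captures the high-level shape of the argument (two-sided control of $\log|\det(p_N+\delta Q_\omega-z)|$ on the boundary, converted to a counting statement by a Jensen/Riesz representation, plus a net and union bound), and in place of the paper's Grushin reduction to the $M\times M$ block $E_{-+}^{\delta}$ followed by Sj\"ostrand's zero-counting theorem \ref{thm:Count} your direct singular-value split is a workable alternative, provided you reproduce the same precision in the boundary error terms. But there are two concrete errors in the probabilistic accounting that would derail the proof if carried out as written.

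First, Bai--Yin does \emph{not} give $\Vert Q_\omega\Vert=\mO(N^{d/2})$ with overwhelming probability under only a finite fourth moment; it is an almost-sure limit theorem, not a quantitative tail bound at finite $N$. With these hypotheses the only available quantitative input is Latala's $\erw\Vert Q_\omega\Vert\lesssim N^{d/2}$ combined with Markov's inequality, which yields $\prob(\Vert Q_\omega\Vert\geq CN^{d/2+(1-\tau)\delta_0})\leq N^{-(1-\tau)\delta_0}$. This is precisely where a large part of the announced failure probability $\mO(r^{-1})N^{-(1-\tau)\delta_0}$ comes from, and it is also where the parameter $\tau$ enters the picture: one trades a weaker (polynomially inflated) norm bound for a smaller failure probability, and the choice $\alpha=N^{-\min(\delta_0,1)\tau}$ in the functional-calculus step is tuned to that trade-off (this is also where $\tau$ appears in $\varepsilon$). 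Attributing the $N^{-(1-\tau)\delta_0}$ loss solely to the smallest singular value estimate misses this mechanism entirely.

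Second, the claimed smallest-singular-value tail $\prob(s_{\min}(p_N+\delta Q_\omega-z)\leq t)\lesssim tN^{d/2}/\delta$ is not available under a mere fourth-moment assumption; such linear-in-$t$ bounds are Gaussian phenomena (the paper's Theorem~\ref{thm:SSV} gives $CNt^2$ in the Gaussian case). What is actually used is Tao--Vu's Theorem 3.2, which gives $\prob\bigl(s_{N^d}\leq (N^d)^{-\gamma(2A+2)+1/2}\bigr)\leq C\bigl(N^{-dA+o(1)}+\prob(\Vert Q_\omega\Vert\geq N^{d\gamma})\bigr)$: a polynomial probability bound at a fixed polynomial threshold, not a sliding linear tail. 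This fixed-threshold form suffices here because the Grushin structure (equivalently, your singular-value split with $M=\mO(N^d\alpha^\kappa)$ small singular values) only requires $\log s_{\min}\gtrsim -\log N$, and the resulting contribution $-M\log s_{\min}^{-1}=\mO(N^d\alpha^\kappa\log N)$ is absorbed into $\varepsilon$. You should replace the heuristic tail bound by a citation to the actual Tao--Vu statement and verify that its hypotheses (a deterministic bound $\Vert \delta^{-1}(p_N-z)\Vert\leq N^{d\gamma}$, which requires Proposition~\ref{prop:qf2}) are met.
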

Similarly to Corollaries \ref{cor2}, \ref{cor2.2}, \ref{cor2.1}, \ref{cor3}, one can use 
Theorem \ref{thm2} to get precise error estimates in the various situations. %
\\
\par
As a consequence of Theorem \ref{thm2}, or of Theorem \ref{thm1} in the Gaussian 
case, we obtain the following result providing a positive response to Conjecture 
\ref{conj1} by \cite{ZwChrist10}. 
\begin{cor}\label{cor1} 
Let $Q_{\omega}$ and $\delta>0$ be as in Theorem \ref{thm2} and 
assume that \eqref{i3} holds uniformly for all $z\in \C$. Set
\begin{equation*}
		\mu_N = N^{-d} \sum_{\lambda \in \spec(p_N +{\delta}Q_{\omega})} \delta_{\lambda}.
\end{equation*}
Then, for $\delta_0 >1$
\begin{equation*}
		\mu_N  \rightharpoonup (p_0)_*(d\rho), \quad \text{almost surely},
\end{equation*}
and for $ \delta_0 \in ]0,1]$, 
\begin{equation*}
		\mu_N  \rightharpoonup (p_0)_*(d\rho), \quad \text{in probability.}
\end{equation*}
\end{cor}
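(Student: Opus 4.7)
The plan is to derive the weak convergence of the random empirical spectral measures $\mu_N$ from the counting asymptotics of Theorem \ref{thm2}. First, I would establish tightness: since $\|p_N\|$ is bounded uniformly in $N$ and, by classical fourth-moment norm bounds for i.i.d.\ random matrices, one has $\|\delta Q_{\omega}\|=\mO(N^{-\delta_0})$ with probability $1-o(1)$, the supports $\supp \mu_N$ all lie in a fixed compact disk $D(0,R)\subset \C$ with overwhelming probability. It therefore suffices to establish $\mu_N(\Omega) \to ((p_0)_*(d\rho))(\Omega)$ on a sufficiently rich countable collection of test sets $\Omega$.

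Let $\mathcal{F}$ be the countable family of open disks $\Omega = D(c,\rho) \subset \C$ with $c$ in a countable dense subset of $\C$ and $\rho$ in a countable dense subset of $]0,\infty[$, chosen so that $p_0^{-1}(\partial D(c,\rho))$ has Lebesgue measure zero in $\T^{2d}$; since the circles $\partial D(c,\rho)$ are pairwise disjoint for fixed $c$ and $(p_0)_*(d\rho)$ is a finite measure, only countably many $\rho$ need be excluded for each $c$. Each $\Omega\in\mathcal{F}$ has a smooth, hence uniformly Lipschitz, boundary; \eqref{i3} is assumed to hold uniformly on $\C$; and $\int_{p_0^{-1}(\partial\Omega+D(0,r))}d\rho \to 0$ as $r \to 0$ by monotone convergence. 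For each such $\Omega$ I would apply Theorem \ref{thm2} with $r = r_N = N^{-\gamma}$ and $\tau\in\,]0,1[$ to be chosen. In the almost-sure regime $\delta_0>1$, pick $\tau$ with $(1-\tau)\delta_0>1$ and then $\gamma>0$ small enough that both $\varepsilon/r_N\to 0$ and $(1-\tau)\delta_0-\gamma>1$. The total error divided by $N^d$ tends to $0$, while the failure probability $\mO(N^{\gamma-(1-\tau)\delta_0})$ is summable in $N$, so Borel-Cantelli yields almost-sure convergence $\mu_N(\Omega)\to((p_0)_*(d\rho))(\Omega)$ for each fixed $\Omega\in\mathcal{F}$; intersecting the corresponding full-probability events over the countable family $\mathcal{F}$ gives a single event of full probability on which this convergence holds simultaneously for all $\Omega\in\mathcal{F}$.

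For $\delta_0 \in\,]0,1]$, an analogous choice of $\tau\in\,]0,1[$ and $\gamma\in\,]0,(1-\tau)\delta_0[$ (adjusted so that $\gamma$ is smaller than the exponent in $\varepsilon$) keeps the failure probability $o(1)$ for each fixed $\Omega$, giving only convergence in probability of the individual $\mu_N(\Omega)$. In both cases the final step is to upgrade set-wise convergence on $\mathcal{F}$ to weak convergence of measures: given tightness and the fact that $\mathcal{F}$ is a convergence-determining class of sets with $(p_0)_*(d\rho)$-null boundaries, this follows from Portmanteau by approximating any $\varphi\in C_c(\C)$ uniformly by finite linear combinations of indicators $\mathbf{1}_{\Omega_i}$ with $\Omega_i\in\mathcal{F}$. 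The delicate point is the balancing of $\tau$ and $\gamma$ against the explicit form of $\varepsilon$ in Theorem \ref{thm2}: summability of the failure probabilities demands $(1-\tau)\delta_0>1$, a constraint that is sharp precisely at $\delta_0=1$, and this is exactly what separates almost-sure convergence for $\delta_0>1$ from convergence only in probability in the range $\delta_0\in\,]0,1]$.
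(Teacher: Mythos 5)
Your proof follows essentially the same strategy as the paper's: establish tightness via the bound on $\Vert \delta Q_{\omega}\Vert$, apply Theorem \ref{thm2} on a countable family of test sets with shrinking boundary layers $r=r_N$, use Borel--Cantelli (for $\delta_0>1$) keyed to the summability constraint $(1-\tau)\delta_0>1$, and then upgrade set-wise convergence to weak convergence of measures. The balancing of $\tau$ and $\gamma$ and the identification of $\delta_0=1$ as the threshold between almost-sure and in-probability convergence is exactly the paper's mechanism.

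The one place you deviate is in the choice of test sets: the paper tiles the plane by half-open squares $\Omega_{\varepsilon,j,k}$, while you use disks $D(c,\rho)$. For the final upgrade you then claim that ``any $\varphi\in C_c(\C)$ can be approximated uniformly by finite linear combinations of indicators $\mathbf{1}_{\Omega_i}$ with $\Omega_i\in\mathcal{F}$.'' This is immediate for grid squares (step-function approximation of a uniformly continuous function on a compact set), but it is not obviously true --- and I believe is false --- when $\mathcal{F}$ consists of disks: disks are not stable under the intersections/unions needed to build up general step functions, and the span of disk indicators does not seem to be dense in $C_0$ for the sup norm. Your parenthetical appeal to ``$\mathcal{F}$ being a convergence-determining class'' is the statement that would actually be needed, but disks do not form a $\pi$-system, so the standard Billingsley criterion doesn't apply verbatim; verifying that a countable family of disks is convergence-determining requires a separate argument (e.g.\ a Fourier/Bessel-zero argument). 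All of this evaporates if you simply use the paper's half-open grid squares, which tile the plane disjointly and trivially yield the uniform step-function approximation. Your incidental observation --- that for fixed $c$ only countably many radii $\rho$ give a boundary of positive $(p_0)_*$-mass --- is a valid alternative to the paper's use of Morse--Sard for ensuring $\int_{p_0^{-1}(\partial\Omega+D(0,r))}d\rho\to 0$, and works just as well for the square grid.
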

We remark than in the case of $\T^{2d}$ the measure induced by the symplectic volume form 
$\sigma^n/n!$ given in Conjecture \ref{conj1} is equal to the Lebesgue measure $d\rho$ on $\T^{2d}$. 
\subsection{Related results}
The case of Toeplitz matrices given by symbols on $\T^2$ of the form $\sum_{n\in\Z} a_n\e^{in\xi}$ 
was studied in a series of recent works by Davies and Hager \cite{DaHa09}, Guionnet, Wood and Zeitouni 
\cite{GuMaZe14}, Basak, Paquette and Zeitouni \cite{BPZ18, BPZ18b}, Sj\"ostrand and the author 
of the current paper \cite{SjVo19a,SjVo19b}. Such symbols amount to the case of symbols which are 
constant in the $x$ variable. In these works the non-selfadjointness of the problem does however not 
come from the symbol itself but from boundary conditions destroying the periodicity of the symbol 
in $x$ by allowing for a discontinuity. Nevertheless, these works show that by adding some small random 
noise the limit of the empirical eigenvalues measure $\mu_N$ of the perturbed operator converges 
in probability (or even almost surely in some cases) to $p_*(d\rho)$. 
\par
In \cite{BPZ18} the authors treated in particular the special case of upper triangular banded 
twisted Toeplitz matrices given by symbols of the form 
\begin{equation*}
	\widetilde{p}(x,\xi) =  \sum_{n=0}^{N_+}f_n(2\pi x)\e^{-2\pi i n \xi}, \quad (x,\xi)\in\T^2
\end{equation*}
where $f_n$ is only assumed to be a H\"older continuous function and can have a 
discontinuity. They showed through quite different methods from ours that 
the $\mu_N$ converges weakly in probability to the measure 
\begin{equation*}
		\widetilde{\mu} = \widetilde{p}_*(d\rho).
\end{equation*}
Thus we recover this result of \cite{BPZ18} (at least in the smooth periodic setting) with 
Corollary \ref{cor1}. This suggests that the results of Corollary \ref{cor1} also 
hold in the case of general twisted Toeplitz matrices with band entries defined by $C^1$ 
functions which are defined on a compact interval with non-periodic boundary conditions. 
\subsection{Outline of this paper}
In Section \ref{sec:SC} we recall some fundamental notions and results from standard semiclassical 
calculus needed in this paper. We use this to describe the procedure to quantize 
complex-valued function on dilated tori $\T^{2d}_{\alpha}$, generalizing the 
quantization approach presented in \cite{ZwChrist10,NoZw07}.
\par
In Section \ref{sec:FC} we build on the theory developed in Section \ref{sec:SC} 
and present some functional calculus and estimates on the number of small 
singular values of Toeplitz quantizations, adapting the approach of \cite{HaSj08}.
\par
In Section \ref{sec:GP} we set up a Grushin problem providing upper bounds on 
the log determinant of the perturbed operator  $\log |\det(p_N+\delta Q_{\omega}-z)|$. 
\par
In Section \ref{sec:PRM} we provide probabilistic lower bounds on the number of small 
singular values of the perturbed operator $(p_N+\delta Q_{\omega}-z)$ which yields   
probabilistic lower bounds on $\log|\det(p_N+\delta Q_{\omega}-z)|$. This, 
together with the estimates of Section \ref{sec:GP}, 
can also be seen as a form of concentration inequality for $\log|\det(p_N+\delta Q_{\omega}-z)|$. 
\par
In Section \ref{sec:c1} we provide proofs of Theorems \ref{thm1} and 
\ref{thm2} by combining the estimates on $\log|\det(p_N+\delta Q_{\omega}-z)|$ 
with a theorem providing estimates on the number of zeros of holomorphic functions 
with exponential growth in Lipschitz domains. 
\par
In Section \ref{sec:WC}, we provide a proof of Corollary \ref{cor1} and in Appendix 
\ref{app1} we prove a complex version of a result due to Sankar, Spielmann and 
Teng \cite[Lemma 3.2]{SaTeSp06}, see also \cite[Theorem 2.2]{TaVu10}.
\subsection{Notation}
In this paper we frequently use the following notation: when we write $a \ll b$, we mean that 
$Ca \leq  b$ for some sufficiently large constant $C>0$. The notation $f = \mO(N)$ 
means that there exists a constant $C>0$ (independent of $N$) such that $|f| \leq C N$. 
When we want to emphasize that the constant $C>0$ depends on some parameter 
$k$, then we write $C_k$, or with the above big-O notation $\mO_k(N)$. 
\par
When we write $f= \mO(N^{-\infty})$, then we mean that for every $M\in\N$, there exists 
a constant $C_M>0$, depending on $M$, such that $|f| \leq C_M N^{-M}$. Similarly we will  
also use the notation $f=\mO(h^{\infty})$, with $h\in]0,1]$.
\par
When we write $f= o(1)$, as $N\to \infty$, then we mean that $f \to 0$ as $N\to \infty$. 
Similarly  $f= o(N)$, as $N\to \infty$, means that $N^{-1}f \to 0$ as $N\to \infty$. 
\\
\\
\paragraph{\textbf{Acknowledgments}} 
The author is very grateful to Maciej Zworski for suggesting this project, to L\'aszl\'o Erd\H{o}s for 
some very enlightening discussions and to the Institute of Science and Technology, Austria, where 
a part of this paper has been written, for providing a welcoming and stimulating environment. 
The author was supported by a CNRS Momentum grant. 
\section{Semiclassical calculus}\label{sec:SC}
In this section we begin by reviewing some basic notions and properties of semiclassical calculus in $\R^d$, as can be found for instance in \cite{DiSj99,Ma02,Zw12}. Afterwards, we will review the Toeplitz quantizaton of functions on the torus, as presented in \cite{ZwChrist10,NoZw07}, which roughly speaking consist in restricting the semiclassical calculus to periodic symbols and to function spaces given by tempered distributions which are both periodic in space and in semiclassical frequency. 
\subsection{Semiclassical quantization}\label{sec:SQ}
Until further notice we let $h\in]0,1]$ denote the semiclassical parameter. 
We call $m\in C^{\infty}(\R^{2d},]0,\infty[)$ an \emph{order function}, if there 
exist $C_0,N_0>0$ such that 
\begin{equation}\label{sc0}
	m(\rho) \leq C_0 \langle \rho - \mu \rangle^{N_0} m(\mu),
\end{equation}
where $\langle \rho - \mu \rangle = (1 + |\rho - \mu|^2)^{1/2}$. We define the symbol class 
\begin{equation}\label{sc0.1}
	S(m) \defeq \{ p\in C^{\infty}(\R^{2d}); ~
	 \forall \alpha\in \N^{2d} :|\partial^{\alpha} p(\rho)| \leq C_{\alpha} m(\rho)\}.
\end{equation}
A symbol $p=p(\rho; h)\in S(m)$ may depend on $h\in ]0,1]$, in which case we assume that the symbol  
estimates \eqref{sc0.1} hold uniformly with respect to $h$. If a symbol $p$ is of the form 
$p(\rho;h) = p_0(\rho) + hr(\rho;h)$
with $r\in S(m)$, then we call $p_0$ the \emph{principal symbol} of $p$. We say that $p\in S(m)$ 
has the asymptotic expansion 
\begin{equation}\label{sc1}
	p \sim p_0 + h p_1 + \dots \text{ in } S(m), \quad p_j \in S(m),
\end{equation}
when $ p - \sum_0^N h^j p_j \in h^{N+1} S(m)$ for any $N\in \N$. In fact, given symbols $p_j\in S(m)$ 
we can always find a symbol $p\in S(m)$ by Borel summation, such that \eqref{sc1} holds. 
\\
\par
The $h$-Weyl quantization of a symbol $p \in S(m) $, acting on a function $u\in \mathcal{S}(\R^d)$ in Schwartz space, is given by 
\begin{equation}\label{sc1.1}
	p^w(x,hD_x)u(x) = \frac{1}{(2\pi h)^d} \iint_{\R^d} \e^{\frac{i}{h}\xi(x-y)} p\left( \frac{x+y}{2},\xi;h\right) u(y) dy d\xi,
\end{equation}
where the integral with respect to $\xi$ is to be seen as an oscillatory integral. Integration by parts shows 
that that the operator $p^w(x,hD_x)$ is continuous $ \mathcal{S}(\R^d)\to \mathcal{S}(\R^d) $, and 
continuous $\mathcal{S}'(\R^d)\to \mathcal{S}'(\R^d)$ by duality. Moreover, it can be shown that when 
$m$ is bounded then $p^w(x,hD_x)$ is bounded $L^2(\R^d)\to L^2(\R^d)$. 
\\
\par
Given $a,b\in S(m)$, then 
\begin{equation}\label{sc2}
	a^w \circ b^w = c^w, \quad \text{where } c = a\#_h b ~\in S(m).
\end{equation}
Here, the product $\#_h$ is the bilinear continuous map
\begin{equation}\label{sc3}
	S(m_1)\times S(m_2) \ni (a,b) \mapsto a\#_h b 
	\defeq \e^{\frac{ih}{2} \sigma(D_x,D_{\xi};D_y,D_{\eta})} a(x,\xi) b(y,\eta) \big|_{y=x,\eta=\xi}~
	\in S(m_1m_2),
\end{equation}
where $\sigma$ denotes the symplectic form on $\R^{2d}$, so 
$\sigma(D_x,D_{\xi};D_y,D_{\eta})= D_{\xi}D_y - D_xD_{\eta}$. 
We have the following asymptotic expansion 
\begin{equation}\label{sc4}
	a\#_h b \sim \sum_0^{\infty} \frac{1}{k!} \left( 
	\frac{ih}{2}\sigma(D_x,D_{\xi};D_y,D_{\eta})
	\right)^k a(x,\xi) b(y,\eta) \big|_{y=x,\eta=\xi} \in S(m_1m_2).
\end{equation}
\par
A symbol $a\in S(m)$ is called \emph{elliptic} if there 
exists a constant $C>0$, independently of $h>0$, such that 
\begin{equation}\label{sc5}
	|a| \geq \frac{1}{C} m.
\end{equation}
\subsection{Quantization of the torus}
We essentially follow the approach of \cite{ZwChrist10,NoZw07} who considered the 
case $\alpha=1$ of the subsequent. For $\alpha >0$, we define the torus
\begin{equation}\label{qt1}
	\T^{2d}_{\alpha} := \R^{2d}/ (\alpha^{-1/2}\Z)^{2d}.
\end{equation}
When $\alpha=1$ then we will write $\T^{2d}=\T^{2d}_{1}$.  
We define the semiclassical Fourier transform by 
\begin{equation}\label{qt2}
	(\mathcal{F}_hu)(\xi) \defeq \frac{1}{(2\pi h)^{d/2}} \int \e^{\frac{i}{h}\xi x }u(x) dx, \quad u \in \mathcal{S}(\R^d), 
\end{equation}
which maps $\cS \to \cS$ continuously and can be extended to a continuous map $\cS' \to \cS'$, 
mapping $L^2\to L^2$ unitarily.
\par
Let $h \ll \alpha \leq 1$ and set $\widetilde{h} = h/\alpha$. We define the space 
$\mathcal{H}_{\tth,\alpha}^d \subset \cS'(\R^d)$ of tempered distributions $u\in\cS'$ which 
are both $\alpha^{-1/2}\Z^d$-periodic in position and in frequency, i.e. 
\begin{equation}\label{qt2.1}
	u(x+ \alpha^{-1/2}n) = u(x), \quad (\mathcal{F}_{\tth}u)(\xi+ \alpha^{-1/2}n) =  (\mathcal{F}_{\tth}u)(\xi), 
	\quad \forall n\in \Z^d.
\end{equation}
When $\alpha=1$ we will simply write $\mathcal{H}_{h}^d = \mathcal{H}_{\tth,1}^d $. The following result 
was stated in the case $\alpha = 1 $ in \cite{ZwChrist10,NoZw07}.
\begin{lem}\label{lem:qt1}
	Let $h \ll \alpha \leq 1$. Then, $\mathcal{H}_{\tth,\alpha}^d \neq \{0\}$ if and only if $h = (2\pi N)^{-1}$ for some $N\in\N^*=\N\backslash\{0\}$, 
	in which case $\dim \mathcal{H}_{\tth,\alpha}^d = N^d$ and 
	\begin{equation}\label{qt3}
		 \mathcal{H}_{\tth,\alpha}^d 
		 = \mathrm{span}\left\{ Q_k^{\alpha}=\frac{1}{(\alpha^{1/2}N)^{d/2}} \sum_{n\in\Z^d} 
		 \delta( x - \alpha^{-1/2}( n + k/N)); k\in (\Z/ N\Z)^d\right\}.
	\end{equation}
\end{lem}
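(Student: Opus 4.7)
The plan is to translate the two periodicity conditions into Fourier-theoretic constraints and read off the result. I would start by writing any $u \in \mathcal{H}_{\tth,\alpha}^d$ as a Fourier series on the torus $\R^d/(\alpha^{-1/2}\Z)^d$,
\[ u = \sum_{n \in \Z^d} c_n \, \e^{2\pi i \alpha^{1/2} n \cdot x}, \]
with tempered coefficients $c_n$; this is just the standard characterization of $\alpha^{-1/2}\Z^d$-periodic tempered distributions as Fourier series on the dual lattice $\alpha^{1/2}\Z^d$. A term-by-term application of $\mathcal{F}_{\tth}$ then yields
\[ \mathcal{F}_{\tth} u = (2\pi \tth)^{d/2} \sum_{n \in \Z^d} c_n \, \delta(\xi + 2\pi \alpha^{1/2} \tth n), \]
so $\mathcal{F}_{\tth} u$ is a discrete measure supported on the lattice $2\pi \alpha^{1/2} \tth \Z^d$.

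Next I would impose the periodicity of $\mathcal{F}_{\tth} u$ with period lattice $\alpha^{-1/2}\Z^d$. For the support above to be invariant under $\alpha^{-1/2}\Z^d$-translations, the inclusion $\alpha^{-1/2}\Z^d \subset 2\pi \alpha^{1/2} \tth \Z^d$ is required; using $\tth = h/\alpha$, this collapses to $2\pi h = 1/N$ for some $N \in \N^*$, i.e.\ $h = 1/(2\pi N)$, establishing the ``only if'' direction. When $h = 1/(2\pi N)$, the support becomes $\alpha^{-1/2} N^{-1} \Z^d$, and a direct comparison of coefficients after shifting $\xi \mapsto \xi + \alpha^{-1/2} m$ shows that $\mathcal{F}_{\tth} u$ is $\alpha^{-1/2}\Z^d$-periodic if and only if $c_{n + Nm} = c_n$ for all $n, m \in \Z^d$, i.e.\ $c_n$ depends only on $n \bmod N$. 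This yields $\dim \mathcal{H}_{\tth,\alpha}^d = N^d$.

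To match the explicit basis, I would verify that each $Q_k^\alpha$, defined as an $\alpha^{-1/2}\Z^d$-periodization of a Dirac mass at $\alpha^{-1/2}k/N$, is $\alpha^{-1/2}\Z^d$-periodic in position (immediate from the lattice-sum definition), and then use the Poisson summation formula to compute $\mathcal{F}_{\tth} Q_k^\alpha$ as a discrete measure on $\alpha^{-1/2} N^{-1}\Z^d$ whose coefficients (up to a global normalization) take the form $\e^{2\pi i k\cdot m/N}$, which are periodic in $m \in \Z^d$ with period $N$; hence $\mathcal{F}_{\tth} Q_k^\alpha$ is also $\alpha^{-1/2}\Z^d$-periodic. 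Linear independence of the family $\{Q_k^\alpha : k \in (\Z/N\Z)^d\}$ is clear because their position supports $\alpha^{-1/2}(\Z^d + k/N)$ are pairwise disjoint, so combined with the dimension count from the previous step they form a basis.

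The main obstacle will be bookkeeping the interlocking factors $\alpha$, $\tth = h/\alpha$, and $N = 1/(2\pi h)$ throughout the Poisson summations, and in particular checking that the normalization $(\alpha^{1/2} N)^{-d/2}$ appearing in the definition of $Q_k^\alpha$ is the natural one; conceptually, however, the argument is a routine application of distributional Fourier analysis once both periodicity conditions have been restated as constraints on the supports and coefficients of $\mathcal{F}_{\tth} u$.
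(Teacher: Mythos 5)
Your proposal is correct and follows essentially the same Fourier-analytic strategy as the paper: expand $\mathcal{F}_{\tilde h}u$ as a Dirac comb supported on $2\pi\alpha^{1/2}\tilde h\,\Z^d$, read off the lattice-compatibility condition $1/(2\pi h)\in\N^*$ from the requirement that the support be $\alpha^{-1/2}\Z^d$-invariant, and obtain $\dim=N^d$ from the resulting $N$-periodicity of the coefficients. The only (harmless) differences are that you invoke the standard Fourier-series characterization of periodic tempered distributions as a black box where the paper re-derives it via Poisson summation with a partition of unity, and you obtain $c_{n+Nm}=c_n$ by directly shifting the Dirac comb rather than passing through the modulation identity $u = e^{-i\alpha^{-1/2}n\cdot x/\tilde h}u$ and verifying that $\{Q_k^\alpha\}$ is a basis by a dimension count plus linear independence rather than by Fourier inversion as in the paper's \eqref{qt9}.
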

\begin{proof} To ease the notation we write $\widehat{u}^{\tth}= (2\pi \tth)^{d/2}\mathcal{F}_{\tth}u$, $u \in \cS'$. 
Recall the Poisson summation formula (for instance from \cite[Section 7.2]{Ho83}) 
\begin{equation}\label{qt4}
		 \sum_{g\in\Z^d}\widehat{\psi}^{\tth}(g) = 
		 \left(\frac{2\pi \tth }{a}\right)^d \sum_{g\in\Z^d}\psi\left(\frac{2\pi \tth }{a} g\right), \quad \psi \in \cS, ~ 0\neq a \in \R.
\end{equation}
Let $\phi \in C^{\infty}_c(\R^d;\R)$ be such that $\sum_{g\in\Z^d} \phi(x-g) =1$. Suppose that $u\in\cS'$ is 
$\alpha^{-1/2}\Z^d$-periodic in position, as in \eqref{qt2.1}. Then for $\psi\in\cS$
\begin{equation}\label{qt5}
	\begin{split}
		\langle \widehat{u}^{\tth} , \psi \rangle 
		= \langle u , \widehat{\psi}^{\tth}\rangle
		&= \langle u , \sum _g\widehat{\psi}^{\tth}(\cdot + g)\phi  \rangle \\
		& = (2\pi \tth \alpha^{1/2})^d \langle u , \sum _g\psi(2\pi \tth \alpha^{1/2}g) \e^{-2\pi i \alpha^{1/2}g \cdot}\phi  \rangle,
	\end{split}
\end{equation}
where the "$\cdot$" holds the place of the variable in which the distribution acts. In the last equation we applied 
\eqref{qt4} with $a = \alpha^{-1/2}$ to $\psi(y)\e^{-\frac{i}{\tth} yx}$, whose $\tth$-Fourier transform $y\to\xi$ is 
given by $\widehat{\psi}^{\tth}(\xi + x)$. Hence, 
\begin{equation}\label{qt6}
		\begin{split}
		&\widehat{u}^{\tth} = (2\pi \tth \alpha^{1/2})^d\sum _g c_g\delta_{2\pi \tth \alpha^{1/2} g }, \\
		& c_g = \langle u ,  \e^{-2\pi i\alpha^{1/2}g \cdot} \phi \rangle = \langle u, \e^{-2\pi i \alpha^{1/2}g \cdot} \rangle_{\T^d_{\alpha}},
		\end{split}
\end{equation}
where in the last line we see $u\in \mathcal{D}'(\T^d_{\alpha})$ as a distribution 
on $\T^{2d}_{\alpha}$, and $\delta_{x_0}=\delta(x-x_0)$ 
denotes the Dirac measure at $x_0$. 
\par
Since $\widehat{u}^{\tth}$ is $\alpha^{-1/2}\Z^d$-translation invariant by \eqref{qt2.1}, it follows that 
\begin{equation}\label{qt7}
		\widehat{u}^{\tth} = \delta_{\alpha^{-1/2}g} *\widehat{u}^{\tth}, \quad \forall g\in\Z^d,
\end{equation}
where $*$ denotes the convolution. Hence, the condition $h= (2\pi N)^{-1}$ for some $N\in \N^*$ is necessary 
for $u \neq 0$, as the supports of both sides of \eqref{qt7} have to match which is equivalent to the condition 
$\Z^d = (2\pi \tth \alpha)^{-1} \Z^d + \Z^d$. 
\par
On the other hand,  suppose that $h= (2\pi N)^{-1}$ for some $N\in \N^*$. The translation invariance \eqref{qt2.1} 
of  $\widehat{u}^{\tth}$ implies that 
$u(x) = \e^{-\frac{i}{\tth}\alpha^{-1/2}nx}u(x)$, $n\in\Z^d$. From \eqref{qt6} we then get $c_g = c_{g + Nl}$ 
for all $l\in\Z^d$. Hence, 
 \begin{equation}\label{qt8}
 \begin{split}
		\widehat{u}^{\tth} &= (2\pi \tth \alpha^{1/2})^d\sum _{j\in(\Z/ N\Z)^d} c_{j} 
		\sum_{l\in\Z^d}\delta_{\alpha^{-1/2}(l + N^{-1}j)} \\ 
		& = (2\pi \tth \alpha^{1/2})^d \sum _{j\in(\Z/ N\Z)^d} c_{j} 
		\left(\delta_{\alpha^{-1/2}N^{-1}j}  * \sum_{l\in\Z^d}\delta_{l\alpha^{-1/2}} \right)
\end{split}
\end{equation}
Notice that when $0\neq a \in \R$ and $u_a = \sum_{g\in\Z^d} \delta_{ag}$, then 
$\widehat{u}^{\tth}_a = (2\pi \tth/a)^d u_{2\pi \tth /a}$, see for instance 
\cite[Theorem 7.2.1]{Ho83}. This, together with 
Fourier inversion formula yields 
 \begin{equation}\label{qt9}
 \begin{split}
		u(x) & = N^{-d}\sum _{j\in(\Z/ N\Z)^d} c_{j} 
		\sum_{l\in\Z^d}\exp\left(\frac{i}{\tth} N^{-1} \alpha^{-1/2} j\cdot x \right) 
		\delta(x-2\pi \tth \alpha^{1/2} l).\\
		& = N^{-d}\sum _{j , k \in(\Z/ N\Z)^d} c_{j} \exp\left(2\pi i N^{-1} j\cdot k \right) 
		\sum_{l'\in\Z^d}
		\delta(x-\alpha^{-1/2}(l' + N^{-1}k)).
\end{split}
\end{equation}
Thus, the condition of the lemma is also sufficient and \eqref{qt3} follows as well. 
\end{proof}
Finally, we remark that the Fourier transform $\mathcal{F}_{\tth}$ maps 
$ \mathcal{H}_{\tth,\alpha}^d $ into 
$ \mathcal{H}_{\tth,\alpha}^d $, and can be represented, in the basis \eqref{qt3}, by 
 \begin{equation}\label{qt10}
	(\mathcal{F}_{\tth})_{nm} = \frac{\e^{-\frac{2\pi i}{N} n\cdot m}}{N^{d/2}}, \quad n,m \in (\Z/ N\Z)^d.
\end{equation}
\subsection{Quantizing functions on the torus}\label{sec:qf}
For $h \ll\alpha\leq  1$ as before, we define an order function on $\T^{2d}_{\alpha}$ as follows: let $m\in C^{\infty}(\T^{2d}_{\alpha}; ]0,\infty[)$ be 
such that there exist constant $C_0,N_0 >0$, independently of $\alpha$, such that 
 \begin{equation}\label{qf1}
 \begin{split}
		m(\rho) &\leq C_0 (1 + |\rho-\mu|_{\T^{2d}_{\alpha}}^2)^{N_0/2} m(\mu), \\ 
			    &\defeq C_0 \langle\rho-\mu\rangle_{\T^{2d}_{\alpha}}^{N_0} m(\mu)
		 \quad \forall \rho,\mu \in \T^{2d}_{\alpha},
\end{split}
\end{equation}
where $ |\rho-\mu|_{\T^{2d}_{\alpha}}:= \inf_{\gamma \in \alpha^{-1/2}\Z^{2d}} | \rho - \mu +\gamma|$. 
When seeing $m$ as a periodic function in $C^{\infty}(\R^{2d}; ]0,\infty[)$, it follows by using the natural 
projection $\R^{2d} \to \T^{2d}_{\alpha}$ that $m$ is an order function on $\R^{2d}$ in the sense of \eqref{sc0}. 
\\
\par
With this notion of order function we define the symbol class
 \begin{equation}\label{qf2}
		S(m,\alpha) \defeq \{ a \in C^{\infty}(\T^{2d}_{\alpha});~
		 \forall \beta \in \N^{2d}~ \exists\,C_{\beta}>0 : ~| \partial^{\beta}_{\rho}a(\rho)| \leq C_{\beta} m(\rho) \},
\end{equation}
where the constant $C_{\beta}>0$ is independent of $\alpha>0$.
\\
\par
Identifying a symbol in $S(m,\alpha)$ with a periodic function in $C^{\infty}(\R^{2d})$, 
we see that $S(m,\alpha) \subset S(m) $. We will use this identification frequently in the 
sequel whenever convenient. Hence, the quantization procedure discussed in 
Section \ref{sec:SQ} applies to symbols $p\in S(m,\alpha)$ and it follows immediately 
from \eqref{sc1.1} by conjugation with the unitary operator $\tau_{\gamma} \e^{i x\cdot \mu/\tth }$, 
$\gamma,\mu\in\alpha^{-1/2}\Z^d$, where $\tau_{\gamma}u(x) = u(x-\gamma)$, that 
 \begin{equation}\label{qf3}
			p^w(x,\tth D_x ) : \mathcal{H}_{\tth,\alpha}^d  \longrightarrow \mathcal{H}_{\tth,\alpha}^d, 
\end{equation}
where $ \mathcal{H}_{\tth,\alpha}^d\subset \cS'$. 
\par
Therefore, we define for $h=\frac{1}{2\pi N}$, $0<N\in \N$, $\tth = h/\alpha$
 \begin{equation}\label{qf4}
		p_{N,\alpha} \defeq p^w(x,\tth D_x )\!\upharpoonright_{\mathcal{H}_{\tth,\alpha}^d }
		~\in \mathcal{L}(\mathcal{H}_{\tth,\alpha}^d ,\mathcal{H}_{\tth,\alpha}^d )
\end{equation}
When $\alpha = 1 $ we will simply write $p_N = p_{N,1}$. Notice 
that $1_{N,\alpha} = \mathrm{Id}_{\mathcal{H}_{\tth,\alpha}^d }$.
\\
\par
It follows from \eqref{sc3} that $a\#_{\tth}b$ is periodic when $a,b\in S(m)$ are periodic. Hence, 
the composition formula \eqref{sc2} applies to symbols $(a,b) \in S(m_1,\alpha)\times S(m_2,\alpha)$ 
and we get
\begin{equation}\label{qf5}
		a_N \circ b_N = c_N, \quad \text{with } c = a\#_{\tth}b \in  S(m_1m_2,\alpha), \quad h = \frac{1}{2\pi N}, \tth = \frac{h}{\alpha}.
\end{equation}
The following result determining the Hilbert space structure of $\mathcal{H}_{\tth,\alpha}^d$ was stated 
in the case $\alpha=1 $ in \cite{ZwChrist10}.
\begin{lem}\label{lem:qf1}
There exists a unique (up to a multiplicative constant) Hilbert space structure 
on $\mathcal{H}_{\tth,\alpha}^d$ for which all 
$f_{N,\alpha} :  \mathcal{H}_{\tth,\alpha}^d  \to \mathcal{H}_{\tth,\alpha}^d$ 
with $f\in C^{\infty}(\T^{2d}_{\alpha};\R)$ are self-adjoint. One can choose the 
constant so that the basis 	\eqref{qt3} is orthonormal. This implies that the Fourier 
transform on $ \mathcal{H}_{\tth,\alpha}^d$ \eqref{qt10} is unitary. 
\end{lem}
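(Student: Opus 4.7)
The plan is to first exhibit a Hilbert space structure by declaring the basis \eqref{qt3} to be orthonormal, and then to prove uniqueness via an irreducibility argument. For existence, by density it is enough to verify self-adjointness of $f_{N,\alpha}$ on a set whose $\R$-linear span is dense in $C^{\infty}(\T^{2d}_{\alpha};\R)$, and I would use the Fourier basis
\[
e_{m,n}(x,\xi) = \exp\bigl(2\pi i \alpha^{1/2}(m\cdot x + n\cdot \xi)\bigr), \quad (m,n)\in\Z^{2d},
\]
grouped in complex-conjugate pairs. Writing a real symbol $p=\sum c_{m,n}e_{m,n}$ with $\overline{c_{m,n}} = c_{-m,-n}$, the task reduces to showing that the quantization $W_{m,n} \defeq (e_{m,n})_{N,\alpha}$ satisfies $W_{m,n}^{*} = W_{-m,-n}$ in the putative inner product, since then
\[
\Bigl(\sum c_{m,n}W_{m,n}\Bigr)^{*} = \sum \overline{c_{m,n}}\,W_{-m,-n} = \sum c_{m,n}W_{m,n}.
\]

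Using \eqref{sc1.1} together with the identification of $Q_k^{\alpha}$ with a $\delta$-comb at $x = \alpha^{-1/2}(\Z^d + k/N)$, I would compute directly that multiplication by $\e^{2\pi i\alpha^{1/2}m\cdot x}$ acts diagonally on $Q_k^{\alpha}$ with eigenvalue $\e^{2\pi i m\cdot k/N}$, while the translation $\e^{2\pi i\alpha^{1/2}n\cdot\tth D_x}$ sends $Q_k^{\alpha}$ to $Q_{k+n}^{\alpha}$. The Weyl ordering contributes a symmetric Maslov-type phase depending only on $(m,n,k)$ in a way that preserves the symmetry $(m,n)\mapsto(-m,-n)$ under adjunction, so that $W_{-m,-n}$ is exactly the adjoint of $W_{m,n}$ when $\{Q_k^{\alpha}\}$ is declared orthonormal. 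This establishes existence and identifies the privileged inner product.

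For uniqueness, suppose $\langle\cdot,\cdot\rangle_1$ and $\langle\cdot,\cdot\rangle_2$ are two Hilbert space structures making every $f_{N,\alpha}$ self-adjoint for $f\in C^{\infty}(\T^{2d}_{\alpha};\R)$. Write $\langle u,v\rangle_2 = \langle u, Av\rangle_1$ with $A$ positive definite with respect to $\langle\cdot,\cdot\rangle_1$; the double self-adjointness gives $A f_{N,\alpha} = f_{N,\alpha} A$ for all real $f$, hence for all complex $f$ after splitting into real and imaginary parts. The principal obstacle is to verify that the algebra $\{f_{N,\alpha} : f\in C^{\infty}(\T^{2d}_{\alpha})\}$ acts irreducibly on $\mathcal{H}_{\tth,\alpha}^{d}$. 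I would deduce this from the two unitary families produced by $e_{m,0}$ and $e_{0,n}$, which in the basis $\{Q_k^{\alpha}\}$ act as a diagonal phase and a cyclic shift, and which satisfy a discrete Heisenberg-type commutation relation $U V = \zeta V U$ with $\zeta$ a primitive $N$-th root of unity; a standard Schur-type argument, analogous to the Stone--von Neumann theorem on $\C^{N^d}$, then forces any matrix commuting with all these generators to be scalar, so $A = cI$ with $c>0$.

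The unitarity of $\mathcal{F}_{\tth}$ on $\mathcal{H}_{\tth,\alpha}^{d}$ is finally immediate: in the orthonormal basis $\{Q_k^{\alpha}\}$ it is represented by the matrix \eqref{qt10}, which is the (scaled) discrete Fourier matrix and hence unitary.
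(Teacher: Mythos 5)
Your proof is correct, and the existence half is essentially the paper's argument: expand $f$ in the characters $e_{m,n}$, compute the action of the quantized exponentials on the delta-comb basis $\{Q_k^{\alpha}\}$ (the position exponential acts diagonally, the momentum exponential acts as a cyclic shift, and Weyl ordering inserts the phase $\e^{\frac{i\pi}{N}(2n\cdot j-n\cdot m)}$), and verify $W_{m,n}^{*}=W_{-m,-n}$ in the inner product making $\{Q_k^{\alpha}\}$ orthonormal; this is exactly the Hermitian symmetry $\overline{F}_{j,m}=F_{m,j}$ of the matrix computed in the paper (your shift goes in the opposite direction to the paper's $Q_j\mapsto Q_{j-m}$, but that is only a sign convention and harmless). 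Where you genuinely diverge is uniqueness. The paper notes that $f\mapsto (F_{m,j})$ maps real symbols \emph{onto} the Hermitian matrices, so the positive operator $B$ relating two admissible inner products commutes with every Hermitian matrix and is scalar. You instead argue by irreducibility: $B$ commutes with all $W_{m,n}$ (the passage from real $f$ to the complex characters is legitimate, since $\Rea e_{m,n}$ and $\Ima e_{m,n}$ are real symbols and quantization is linear), and the clock-and-shift family $W_{m,0}$, $W_{0,n}$ obeying the discrete Heisenberg relation has trivial commutant: commuting with all diagonal characters forces $B$ to be diagonal, commuting with the shifts forces it to be scalar. This Schur/Stone--von Neumann route is slightly more economical, since it needs only the explicit action of the quantized exponentials that you already computed for existence and avoids proving surjectivity onto the Hermitian matrices; the paper's surjectivity argument costs a little more but yields the somewhat stronger byproduct that every Hermitian matrix arises as the quantization of a real symbol. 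The final remark on unitarity of $\mathcal{F}_{\tth}$ via the explicit matrix \eqref{qt10} is the same in both treatments.
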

\begin{proof}
	We can essentially follow the proof of \cite[Lemma 2.4]{ZwChrist10}, which we present here in an adapted version 
	for the readers convenience. Let $(\bullet| \bullet)_0$ denote the scalar product 
	on $\mathcal{H}_{\tth,\alpha}^d$ for which the basis $\{Q_j^{\alpha}\}_{j\in (\Z/N\Z)^d}$ \eqref{qt3} is orthonormal. 
	We write the operator $f^w(x,\tth D_x)$ on $\mathcal{H}_{\tth,\alpha}^d$ explicitly in that basis using the Fourier 
	expansion of $f$: 
	\begin{equation}\label{qf6}
		\begin{split}
			&f(x,\xi) =  \sum_{n,m\in\Z^d} \widehat{f}(n,m) \,\e^{2\pi i \alpha^{1/2} ( x\cdot n + \xi\cdot m)}, \\
			&\widehat{f}(n,m) = \alpha^d \int_{\T^{2d}_{\alpha}} f(x,\xi) \,\e^{-2\pi i \alpha^{1/2} ( x\cdot n + \xi\cdot m)} dxd\xi.
		\end{split}
	\end{equation}
	Integration by parts shows that 
	\begin{equation}\label{qf6.1}
		\widehat{f}(n,m) = \mO_k(1)\langle(n,m)\rangle ^{-k} \alpha^{d-k/2} \sum_{|\beta|\leq k}
		\| \partial^{\beta}f \|_{L^1(\T^{2d}_{\alpha})}.
	\end{equation}
	Write $L_{n,m}(x,\xi) = n\cdot x + m\cdot \xi$, so that 
	\begin{equation}\label{qf7}
		f^w(x,\tth D_x) =  \sum_{n,m\in\Z^d} \widehat{f}(n,m) \,\e^{2\pi i \alpha^{1/2} L_{n,m}^w(x,\tth D_x) }.
	\end{equation}
	Since $\e^{2\pi i \alpha^{1/2} L_{n,m}^w(x,\tth D_x) } = \e^{i\pi \alpha^{1/2}n\cdot x }\tau_{-2\pi \tth \alpha^{1/2}m} \e^{i\pi \alpha^{1/2}n \cdot x}$, 
	we check directly that 
	\begin{equation*}
		\e^{2\pi i \alpha^{1/2} L_{n,m}^w(x,\tth D_x) } Q^{\alpha}_j =
		\e^{\frac{i\pi}{N}(2n\cdot j-n\cdot m)}Q^{\alpha}_{j-m},
	\end{equation*}
	where $j-m$ is meant mod $N$. Here $\tau_au(x) = u(x-a)$ is the shift operator. 
	Consequently, 
	\begin{equation}\label{qf8}
	\begin{split}
		&f_{N,\alpha} Q^{\alpha}_j = \sum_{m\in (\Z/N\Z)^d} F_{m,j} Q_m^{\alpha}, \\
		& F_{m,j} = \sum_{n,r\in \Z^d}\widehat{f}(n,j-m-rN) \,\e^{\frac{i\pi}{N}(j+m)\cdot n}(-1)^{n\cdot r}.
	\end{split}
	\end{equation}
	Notice that $F_{m,j}$ depends on $\alpha$, although not explicitly denoted here.
	\par
	Since $\overline{\widehat{f}(n,m)}=\widehat{\overline{f}}(-n,-m)$, we get that 
	\begin{equation}\label{qf9}
	\begin{split}
		\overline{F}_{j,m} &= \sum_{n,r\in \Z^d}\widehat{\overline{f}}(-n,-(m-j-rN)) \,\e^{-\frac{i\pi}{N}(j+m)\cdot n}(-1)^{n\cdot r} \\
		&= \sum_{n,r\in \Z^d}\widehat{\overline{f}}(n,j-m-rN) \,\e^{\frac{i\pi}{N}(j+m)\cdot n}(-1)^{n\cdot r}.
	\end{split}
	\end{equation}
	We see that $\overline{F}_{j,m} = F_{m,j}$ for real-valued $f$. For such an $f$ we see that 
	$f_{N,\alpha}$ is self-adjoint for the inner product  $(\bullet|\bullet)_0$ and that the map 
	$f \mapsto (F_{m,j})_{j,m \in (\Z/N\Z)^d}$ is onto, from $C^{\infty}(\T^{2d}_{\alpha}) $ to 
	the space of Hermitian matrices. 
	\\
	\par
	Any other metric on $\mathcal{H}_{\tth,\alpha}^d$ can be written as 
	$(u|v) = (Bu|v)_0 = (u|Bv)_0$. If $(f_{N,\alpha}u|v) = (u|f_{N,\alpha}v)$ for 
	real-valued $f$, then $B$ and $f_{N,\alpha}$ commute for all such $f$, and 
	hence for all Hermitian matrices. This shows that $B=c\,\mathrm{Id}$, as claimed. 
\end{proof}
From now on we equip $\mathcal{H}_{\tth,\alpha}^d$ with 
 the inner product $(\bullet|\bullet)_0$, for which the basis \eqref{qt3} is 
orthonormal, and drop the subscript. Furthermore, we use this basis to identify 
\begin{equation}\label{qf10}
	\mathcal{H}_{\tth,\alpha}^d \simeq \ell^2( (\Z/N\Z)^d) \simeq \C^{N^d}.
\end{equation}
Using \eqref{qf8}, \eqref{qf6.1}, we deduce the following result which was presented in the 
case $\alpha=1$ in \cite[Lemma 2.5]{ZwChrist10}. 
\begin{prop}\label{prop:qf1}
	Let $f\in S(m,\alpha)$, then 
	\begin{equation*}
		\tr f_{N,\alpha} = (N\alpha)^d \int_{\T^{2d}_{\alpha}} f(\rho)d\rho + r_N,
	\end{equation*}
	where for every $k\in\N$, there exists a constant $C_{k,d}>0$, depending only on $k$ and 
	the dimension $d$, such that 
	\begin{equation*}
		| r_N | \leq C_{k,d} N^{d-k} \alpha^{d-k/2}\sum_{|\beta|\leq \max(2d+1,k)}
		\| \partial^{\beta} f \|_{L^1(\T^{2d}_{\alpha})}.
	\end{equation*}
\end{prop}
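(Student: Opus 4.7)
The plan is to compute $\tr f_{N,\alpha}$ directly from the explicit matrix entries \eqref{qf8}, extract the zero Fourier mode as the leading term, and control everything else via the Fourier decay \eqref{qf6.1}. Setting $m=j$ in \eqref{qf8} and summing over $j\in(\Z/N\Z)^d$ gives
\begin{equation*}
\tr f_{N,\alpha} = \sum_{n,r\in\Z^d}\widehat{f}(n,-rN)\,(-1)^{n\cdot r}\sum_{j\in(\Z/N\Z)^d}\e^{2\pi i j\cdot n/N}.
\end{equation*}
The inner $j$-sum equals $N^d$ when $n\in N\Z^d$ and $0$ otherwise, so only the modes $n=Nn'$, $n'\in\Z^d$ survive, producing
\begin{equation*}
\tr f_{N,\alpha} = N^d\sum_{n',r\in\Z^d}\widehat{f}(Nn',-Nr)(-1)^{Nn'\cdot r}.
\end{equation*}
The $(n',r)=(0,0)$ contribution is $N^d\widehat{f}(0,0) = (N\alpha)^d\int_{\T^{2d}_\alpha}f\,d\rho$ by the definition of the Fourier coefficients in \eqref{qf6}, which is exactly the claimed principal term.

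To estimate the remainder $r_N = N^d\sum_{(n',r)\neq (0,0)}\widehat{f}(Nn',-Nr)(-1)^{Nn'\cdot r}$, I would set $k^\ast = \max(k,2d+1)$ and apply \eqref{qf6.1} with exponent $k^\ast$. For any $(n',r)\neq 0$ one has $\langle N(n',r)\rangle \geq N|(n',r)|$, so each summand is bounded by $C_{k^\ast}N^{-k^\ast}|(n',r)|^{-k^\ast}\alpha^{d-k^\ast/2}\sum_{|\beta|\leq k^\ast}\|\partial^\beta f\|_{L^1}$. Since $k^\ast > 2d$, the lattice sum $\sum_{(n',r)\neq 0}|(n',r)|^{-k^\ast}$ converges to a constant depending only on $k$ and $d$, yielding
\begin{equation*}
|r_N| \leq C_{k,d}\,N^{d-k^\ast}\alpha^{d-k^\ast/2}\sum_{|\beta|\leq k^\ast}\|\partial^\beta f\|_{L^1(\T^{2d}_\alpha)}.
\end{equation*}

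When $k\geq 2d+1$, $k^\ast = k$ and this is precisely the stated bound. For $k<2d+1$, the factor needed to convert $N^{d-k^\ast}\alpha^{d-k^\ast/2}$ into the claimed $N^{d-k}\alpha^{d-k/2}$ is $(N\alpha^{1/2})^{-(k^\ast-k)}$, which is $\leq 1$ as soon as $N\alpha^{1/2}\geq 1$. This last inequality follows from the standing assumption $h\ll\alpha$ of Section \ref{sec:qf}, since $h = 1/(2\pi N)\ll\alpha$ forces $N\alpha\gg 1$ and hence $N\alpha^{1/2}\geq 1$ up to a harmless constant absorbed into $C_{k,d}$. This small bookkeeping between powers of $N$ and $\alpha$ is essentially the only non-routine step; the remainder of the argument is a direct orthogonality computation in the basis \eqref{qt3} combined with the off-the-shelf decay estimate \eqref{qf6.1}, so I do not anticipate a real conceptual obstacle.
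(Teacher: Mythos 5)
Your proof is correct and follows exactly the route the paper indicates (the paper only states that the proposition follows "using \eqref{qf8}, \eqref{qf6.1}"): take the diagonal of the matrix \eqref{qf8}, use orthogonality of $\sum_j \e^{2\pi i j\cdot n/N}$ to isolate the modes $n\in N\Z^d$, identify the $(0,0)$ mode with $(N\alpha)^d\int f\,d\rho$, and bound the remaining lattice sum by the decay estimate \eqref{qf6.1}. The bookkeeping with $k^\ast=\max(k,2d+1)$ and the reduction to the stated exponents via $N\alpha^{1/2}\gg 1$ (from the standing assumption $h\ll\alpha\leq 1$) is handled correctly.
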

We end this section with a boundedness result.
\begin{prop}\label{prop:qf2}
	Let $ N^{-1} \ll \alpha \leq 1$ and $p \in S(1,\alpha)$. Then, there exists a constant $C>0$,  
	independent of $N$ and $\alpha$, such that
	\begin{equation*}
			\| p_{N,\alpha} \|_{\ell^2 \to \ell^2} \leq C.
	\end{equation*}
\end{prop}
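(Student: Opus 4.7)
The strategy is the classical square-root argument of H\"ormander that underlies the proof of the Calder\'on--Vaillancourt theorem, carried out in the Toeplitz calculus developed in Section~\ref{sec:qf}. The three required ingredients are all in place: the Moyal composition formula \eqref{qf5}--\eqref{sc4}, the self-adjointness of $f_{N,\alpha}$ for real-valued $f$ (Lemma~\ref{lem:qf1}), and the smallness $\tth = h/\alpha = (2\pi N\alpha)^{-1}\ll 1$ coming from $N^{-1}\ll\alpha$.

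Since $p\in S(1,\alpha)$ entails $\|p\|_\infty\leq C_0$ uniformly in $\alpha$, fix $M>C_0$ so that $M^2-|p|^2\geq M^2/2>0$; then $b:=(M^2-|p|^2)^{1/2}$ is a real-valued symbol in $S(1,\alpha)$ whose seminorms are controlled by those of $p$. Using \eqref{sc4} together with the cancellation $\{b,b\}=0$ (which kills the $\tth$-order term of $b\#_{\tth} b$) and the fact that $\{p,\bar p\}$ is purely imaginary, we obtain in $S(1,\alpha)$:
\begin{equation*}
	b\#_{\tth} b = M^2 - |p|^2 + \tth^2 r_b, \qquad \bar p \#_{\tth} p = |p|^2 + (i\tth/2)\{p,\bar p\} + \tth^2 r_p,
\end{equation*}
with $r_b,r_p\in S(1,\alpha)$ real and having seminorms controlled, to higher order, by those of $p$. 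Quantizing via \eqref{qf5} and invoking Lemma~\ref{lem:qf1} (which, by linearity, gives $(p_{N,\alpha})^*=(\bar p)_{N,\alpha}$, so that $p_{N,\alpha}^* p_{N,\alpha}=(\bar p \#_{\tth} p)_{N,\alpha}$, while $(b\#_{\tth} b)_{N,\alpha}=b_{N,\alpha}^2\geq 0$ since $b$ is real), the sum of the two identities rearranges into the operator inequality
\begin{equation*}
	p_{N,\alpha}^* p_{N,\alpha} \leq M^2\,\mathrm{Id} + \tth \bigl((i/2)\{p,\bar p\}\bigr)_{N,\alpha} + \tth^2 (r_p+r_b)_{N,\alpha},
\end{equation*}
whence $\|p_{N,\alpha}\|^2 \leq M^2 + \tth \bigl\|((i/2)\{p,\bar p\})_{N,\alpha}\bigr\| + \tth^2 \|(r_p+r_b)_{N,\alpha}\|$.

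The right-hand side still contains operator norms of Toeplitz quantizations of symbols in $S(1,\alpha)$, with seminorms controlled (in higher order) by those of $p$. The \emph{main obstacle} is to close this into a uniform a priori estimate. This is accomplished by the standard H\"ormander--Beals bootstrap: set $A_k := \sup\{\|q_{N,\alpha}\| : \mathcal{N}_k(q)\leq 1\}$, where $\mathcal{N}_k$ denotes the $k$-th $S(1,\alpha)$-seminorm, so that the above estimate takes the recursive form $A_k^2 \leq C + \tth\,C\,A_{k+2} + \tth^2\,C\,A_{k+4}$ with $C$ depending only on $k$ and $d$. All the $A_k$ are a priori finite by the trivial Hilbert--Schmidt bound $\|q_{N,\alpha}\|^2 \leq \tr((\bar q \#_{\tth} q)_{N,\alpha}) \leq C N^d$, which follows from Proposition~\ref{prop:qf1} applied to the $S(1,\alpha)$ symbol $\bar q \#_{\tth} q$ (whose integral over $\T^{2d}_\alpha$ is $\mO(\alpha^{-d})$ since $\|q\|_\infty$ is bounded). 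Iterating the recursion finitely many times absorbs the $N^{d/2}$ tail into increasing powers of $\tth$; since $\tth\ll 1$, the bootstrap closes and yields a bound on $\|p_{N,\alpha}\|$ depending only on finitely many seminorms of $p$, uniformly in $N$ and $\alpha$, as required.
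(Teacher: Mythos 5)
The square-root trick is a genuinely different route from the paper, which proves Proposition~\ref{prop:qf2} by decomposing $p$ with an $\alpha^{-1/2}\Z^{2d}$-periodic partition of unity, establishing rapid off-diagonal decay of $\|(p_{\beta_1})_N^*(p_{\beta_2})_N\|$ via stationary phase and the trace formula, and closing with the Cotlar--Stein lemma. Unfortunately your bootstrap, as written, does not close, and this is a genuine gap rather than a mere omission of routine detail.

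The recursion you write, $A_k^2 \leq C + \tth\,C\,A_{k+2} + \tth^2\,C\,A_{k+4}$, has the seminorm shift in the wrong direction. The symbols $\{p,\bar p\}$ and the Moyal remainders $r_p,r_b$ involve strictly \emph{more} derivatives of $p$: to control $\mathcal{N}_l(\{p,\bar p\})$ one needs $\mathcal{N}_{l+1}(p)$, and similarly for the order-$\tth^2$ remainders (with a loss $\geq 2$ plus a dimension-dependent loss in the stationary-phase remainder estimate). Hence when $\mathcal{N}_j(p)\leq 1$, the remainders are only controlled in $\mathcal{N}_{j-j_0}$ for some $j_0>0$, and the usable bound is $\|R_{N,\alpha}\|\leq C A_{j-j_0}$; since $A_{j-j_0}\geq A_j$ (the $A_k$ are decreasing in $k$), this gives $A_j^2\leq C + \tth\,C\,A_{j-j_0}+\ldots$, which cannot be closed by iteration against itself. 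If your recursion with $A_{k+2}$, $A_{k+4}$ on the right were correct it would indeed close at once, precisely because $A_{k+2}\leq A_k$; but it is not.

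Even granting the correct recursion and iterating from the a priori bound $A_j\leq C N^{d/2}$, each iteration only takes a square root, so the power of $N$ decays as $N^{d/2^{n+1}}$ after $n$ steps. Reducing this to $\mathcal{O}(1)$ therefore requires $n\sim\log\log N$ iterations. Each iteration shifts the seminorm index by a fixed amount, so the final bound involves $\mathcal{N}_{k_0 + C\log\log N}(p)$; since the seminorms of a symbol in $S(1,\alpha)$ are allowed to grow arbitrarily fast in $j$, this does not yield a bound uniform in $N$. The underlying issue is that in the critical regime $\alpha\approx N^{-1}$ the effective semiclassical parameter $\tth$ is a fixed small constant (it does not tend to $0$), so one iteration of the square-root argument gains only a fixed factor, not enough to absorb $N^{d/2}$. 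This is exactly the reason the Calder\'on--Vaillancourt theorem for the critical class $S^0_{0,0}=S(1)$ is classically proven via Cotlar--Stein (as in \cite[Lemma 7.2]{DiSj99}, which the paper uses) or the Bargmann transform, rather than the square-root trick, which works cleanly only when the calculus gains order per step.

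The ingredients you identify --- the composition formula \eqref{qf5}, the self-adjointness from Lemma~\ref{lem:qf1}, the Hilbert--Schmidt a priori bound via Proposition~\ref{prop:qf1} --- are all correct and relevant, but they need to be combined via almost-orthogonality rather than via the square-root bootstrap.
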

\begin{proof}0. In the case when $CN^{-\rho} \leq \alpha \leq 1$ with $\rho\in ]0,1[$, we 
			may follow the proof of Proposition 2.7 in \cite{ZwChrist10} with the 
			obvious modifications. We therefore present here only  the proof 
			in the critical case $\alpha = N^{-1}C$, $C\gg1$. 
			\\
			\par
			1. We begin by constructing a partition of unity of $\R^{2d}$ comprised out 
			    of $\alpha^{-1/2}\Z^{2d}$ periodic smooth functions. Indeed let $\ell \in [1,2]$ 
			    be such that $\alpha^{-1/2} \ell^{-1} =: M \in \N$. Let $K=[0,\ell]^d$ and let $X\Subset \R^{2d}$ 
			    be some small open relatively compact $\alpha$-independent neighbourhood of $K$. 
			    There exists a $\psi\in C^{\infty}_c(X;[0,1])$ such that $\psi \equiv 1$ on $K$ and 
			    $\partial_{\rho}^{\eta}\psi(\rho) =\mO_{\eta}(1)$, uniformly in $\alpha$, for any $\eta \in \N^{2d}$. Set 
			    \begin{equation*}
				\widetilde{\psi}(\rho) \defeq \sum_{g \in \Z^{2d}} \psi(\rho -\ell g) \geq 1,
			    \end{equation*}
			  and notice that $\partial_{\rho}^{\eta}\widetilde{\psi}(\rho) =\mO_{\eta}(1)$, uniformly in 
			  $\alpha$, for any $\eta \in \N^{2d}$. Setting
			      \begin{equation*}
				\phi(\rho) \defeq \frac{\psi(\rho)}{\widetilde{\psi}(\rho)},
			    \end{equation*}
			   we see that $\phi \in C^{\infty}_c(\R^{2d};[0,1])$ with $\supp \phi \Subset B(0,C_0)$, 
			   for some constant $C_0>0$, independent of $\alpha$, and with 
			   $\partial_{\rho}^{\eta}\phi(\rho) =\mO_{\eta}(1)$, uniformly in 
			  $\alpha$, for any $\eta \in \N^{2d}$. Moreover, 
			  \begin{equation}\label{bd0.0}
				\sum_{g \in \Z^{2d}} \phi(\rho -\ell g) = 1, \quad \forall \rho \in \R^{2d}.
			\end{equation}
			Write $\Z^{2d}_{M}=  (\Z/(M\Z))^{2d}$ and set 
			\begin{equation*}
				\phi_{\beta}(\rho) \defeq \sum_{g\in\Z^{2d}} \phi\left(\rho -\ell \beta - \alpha^{-1/2}g \right). 
			\end{equation*}
			Notice that $\alpha^{-1/2} = \ell M$. Clearly $\phi_{\beta}\in C^{\infty}(\R^{2d};[0,1])$ is a 
			$\alpha^{-1/2}\Z^{2d}$-periodic function, such that 
			 $\partial_{\rho}^{\eta}\phi_{\beta}(\rho) =\mO_{\eta}(1)$, uniformly in $\alpha$, for any 
			 $\eta \in \N^{2d}$. By \eqref{bd0.0} we get that 
			\begin{equation*}
				\sum_{\beta \in \Z^{2d}_{M}}
				\phi_{\beta}(\rho) = 1.
			\end{equation*}
			For $\beta \in \Z^{2d}_{M}$ we define 
			\begin{equation*}
				p_{\beta} \defeq \phi_\beta\, p.
			\end{equation*}
			It is well know (see for instance \cite[Section 7]{DiSj99}) 
			that the composition \eqref{sc3} can be written as the oscillatory integral
			\begin{equation}\label{bd0}
				(\overline{p}_{\beta_1}\#_{\tth} p_{\beta_2})(\rho)  
				= 
				\frac{C_d}{\tth^{2d}}\int_{\R^{4d}} \e^{-\frac{i}{\tth}\varphi(w) }
						\overline{p}_{\beta_1}(\rho - w_1) p_{\beta_2}(\rho -w_2)dw
			\end{equation}
			where $\varphi(w)= 2\sigma(w_1,w_2)$ with $|\det \varphi'' |= 2^{4d}$ and with signature 
			$\mathrm{sign}(\varphi'')=0$. 
			\par
			We split the integral \eqref{bd0} into two parts, $I_1(\rho)+I_2(\rho)$, 
			by using the cut-off functions $\chi(w)$ and 
			$(1-\chi(w))$, where $\chi \in C^{\infty}_c(\R^{4d};[0,1])$ is equal to $1$ on $B(0,1)$
			and equal to $0$ on $\R^{4d}\backslash B(0,2)$. 
			\\
			\par
			2. For $\gamma \in \N^{2d}$ we take the $\partial^{\gamma}_\rho$ derivative of $I_1$ 
			and obtain  
			\begin{equation*}
				\partial^{\gamma}_{\rho}I_1(\rho)
				= 
				\frac{C_d}{\tth^{2d}}\int_{\R^{4d}} \e^{-\frac{i}{\tth}\phi(w) }\chi(w)
						\partial^{\gamma}_{\rho}(\overline{p}_{\beta_1}(\rho - w_1) 
						p_{\beta_2}(\rho -w_2))dw.
			\end{equation*}
			The method of stationary phase, see for instance \cite[Proposition 5.2]{DiSj99}, yields that 
			\begin{equation}\label{bd1}
			\begin{split}
				&\partial^{\gamma}_{\rho}I_1(\rho) \\
				&= \mO_d(1)
				\left( \partial^{\gamma}_{\rho}(\overline{p}_{\beta_1}(\rho) 
						p_{\beta_2}(\rho))
					+\tth \sum_{|\eta|\leq 4d+3}
						\sup_{|w|\leq 2} 
						\left|\partial^{\eta}_w\chi(w)
						\partial^{\gamma}_{\rho}(\overline{p}_{\beta_1}(\rho - w_1) 
						p_{\beta_2}(\rho -w_2))\right|
				\right).
			\end{split}
			\end{equation}
			Notice that the terms on the right hand side are equal to $0$ unless $|w|\leq 2$ and 
			\begin{equation}\label{bd2.0}
				\begin{split}
					&|\rho - w_1 - \ell \beta_1|_{\T^{2d}_{\alpha}}
					=\inf\limits_{g\in\Z^{2d}}|\rho - w_1 - \ell\beta_1-\alpha^{-1/2}g| \leq C_0, \\
					&|\rho - w_2 - \ell\beta_2|_{\T^{2d}_{\alpha}} \leq C_0.
				\end{split}
			\end{equation}
			Then, 
			\begin{equation}\label{bd2}
			\begin{split}
				|\ell\beta_1 - \ell\beta_2|_{\T^{2d}_{\alpha}} 
				&\leq 
				|\rho - w_1 - \ell\beta_1|_{\T^{2d}_{\alpha}}  +|\rho - w_2 - \ell\beta_2|_{\T^{2d}_{\alpha}} +
				|w_1|_{\T^{2d}_{\alpha}} +|w_2|_{\T^{2d}_{\alpha}} \\
				& \leq 2C_0 +4 ,
			\end{split}	
			\end{equation}
			and similarly 
			\begin{equation}\label{bd3}
			\begin{split}
				\left|\rho - \frac{\ell\beta_1+\ell\beta_2}{2}\right|_{\T^{2d}_{\alpha}}
				&\leq 
				\frac{1}{2}|\rho - w_1 - \ell\beta_1|_{\T^{2d}_{\alpha}}  +
				\frac{1}{2}|\rho - w_2 - \ell\beta_2|_{\T^{2d}_{\alpha}} +
				\frac{1}{2}|w_1|_{\T^{2d}_{\alpha}} +\frac{1}{2}|w_2|_{\T^{2d}_{\alpha}} \\
				& \leq C_0 +2.
			\end{split}	
			\end{equation}
			Since all derivatives of $p\in S(1,\alpha)$ are bounded, we deduce from \eqref{bd1}, 
			\eqref{bd2}, \eqref{bd3} that 
			for any $K\in\N$, $\gamma \in \N^{2d}$, $\beta_1,\beta_2 \in \Z^{2d}_{M}$
			\begin{equation}\label{bd4}
				|\partial^{\gamma}_{\rho}I_1(\rho)| \leq 
				\mO_{d,\gamma,K} (1) \langle \ell\beta_1-\ell\beta_2\rangle_{\T^{2d}_{\alpha}}^{-K}
				\left\langle\rho-\frac{\ell\beta_1+\ell\beta_2}{2}\right\rangle_{\T^{2d}_{\alpha}}^{-K}.
			\end{equation}
			3. Next, we turn to the second part of \eqref{bd0} 
			\begin{equation*}
				I_2(\rho)
				= 
				\frac{C_d}{\tth^{2d}}\int_{\R^{4d}} \e^{-\frac{i}{\tth}\varphi(w) }(1-\chi(w))
						\overline{p}_{\beta_1}(\rho - w_1) 
						p_{\beta_2}(\rho -w_2)dw.
			\end{equation*}
			Since the integrand is equal to $0$ when $|w|\leq 1$, and $|\nabla \varphi| = 2|w|$, 
			we set $^tL= |\nabla \varphi |^{-2} \nabla \varphi \cdot  \frac{\tth}{i}\nabla $, and obtain from integration 
			by parts that for any $\gamma \in \N^{2d}$ and $T\in \N$ sufficiently large, 
			\begin{equation}\label{bd4.1}
			\begin{split}
				|\partial_{\rho}^{\gamma}I_2(\rho)|
				&=
				\left| 
				\frac{C_d}{\tth^{2d}}\int_{\R^{4d}} \e^{-\frac{i}{\tth}\varphi(w) }
				 L^T\big((1-\chi(w))
						\partial_{\rho}^{\gamma}(\overline{p}_{\beta_1}(\rho - w_1) 
						p_{\beta_2}(\rho -w_2))\big)dw\right| \\
				& \leq \mO_{d,\gamma,T}(1) \tth^{T-2d} 
					\int_{|w|\geq 1} \frac{f_{\beta_1}(\rho - w_1) f_{\beta_2}(\rho - w_2)}{|w|^T} dw,
			\end{split}
			\end{equation}
			where $f_{\beta_i}(\rho-w_i-\beta_i)$, $i=1,2$, are bounded continuous functions 
			which are equal to $0$ unless \eqref{bd2.0} holds. Since $|w|\geq 1$, we obtain 
			similarly to \eqref{bd2}, \eqref{bd3} that 
			\begin{equation}\label{bd5}
			\begin{split}
				|\ell\beta_1 - \ell\beta_2|_{\T^{2d}_{\alpha}} &\leq 2C_0 + \sqrt{2}|w|_{\T^{2d}_{\alpha}} \\
				&\leq \mO(1)|w|,
			\end{split}	
			\end{equation}
			and
			\begin{equation}\label{bd6}
			\begin{split}
				\left|\rho - \frac{\ell\beta_1+\ell\beta_2}{2}\right|_{\T^{2d}_{\alpha}} &\leq 
				C_0 +
				\frac{\sqrt{2}}{2}|w|_{\T^{2d}_{\alpha}} \\
				& \leq \mO(1)|w|.
			\end{split}	
			\end{equation}
			Here we used that $|w|_{\T^{2d}_{\alpha}}\leq |w|$. Hence, taking $T\in \N$ in 
			\eqref{bd4.1} sufficiently large, we obtain that for any 
			$K\in\N$, $\gamma \in \N^{2d}$, $\beta_1,\beta_2 \in \Z^{2d}_{M}$, 
			\begin{equation}\label{bd7}
				|\partial_{\rho}^{\gamma}I_2(\rho)|
				 \leq \mO_{d,\gamma,K} (1)
				 \langle \ell\beta_1-\ell\beta_2\rangle_{\T^{2d}_{\alpha}}^{-K}
				\left\langle\rho-\frac{\ell\beta_1+\ell\beta_2}{2}\right\rangle_{\T^{2d}_{\alpha}}^{-K}.
			\end{equation}
			4. Recall the definition \eqref{qf1} and notice that for any $\beta_1,\beta_2\in \Z^{2d}_{M}$, 
			$K \in \N\backslash\{0\}$
			\begin{equation}\label{bd8}
				 m_{\beta_1,\beta_2}(\rho) \defeq 
				 \langle \ell\beta_1-\ell\beta_2\rangle_{\T^{2d}_{\alpha}}^{-K}
				\left\langle\rho-\frac{\ell\beta_1+\ell\beta_2}{2}\right\rangle_{\T^{2d}_{\alpha}}^{-K}.
			\end{equation}
			is an order function on $\T^{2d}_{\alpha}$. Since the constants in the estimates \eqref{bd4} 
			and \eqref{bd7} are independent of $N$ and $\alpha$, it follows that 
			$\overline{p}_{\beta_1}\#_{\tth} p_{\beta_2}\in S(m_{\beta_1,\beta_2},\alpha)$. By 
			\eqref{qf5}, we then see that 
			\begin{equation}\label{bd9}
			q_{\beta_1,\beta_2} \defeq
			 p_{\beta_1}\#_{\tth} \overline{p}_{\beta_2}\#_{\tth}  \overline{p}_{\beta_1}\#_{\tth} p_{\beta_2}
			 \in S(m_{\beta_1,\beta_2}^2,\alpha).
			\end{equation}
			Using the calculus \eqref{qf5} and Proposition \eqref{prop:qf1}, we get that 
			\begin{equation}\label{bd10}
			\begin{split}
			\| (p_{\beta_1})^*_N&(p_{\beta_2})_N\|_{\mathrm{HS}}^2 \\
			&= \tr (p_{\beta_2})^*_N(p_{\beta_1})_N(p_{\beta_1})^*_N(p_{\beta_2})_N \\
			& = (N\alpha)^d \left( \int_{\T^{2d}_{\alpha}} q_{\beta_1,\beta_2}(\rho) d\rho 
			+ \mO(N\alpha^{1/2})^{-1}
			\sum_{|\gamma| \leq 2d+1}
			\| \partial^{\gamma} q_{\beta_1,\beta_2}\|_{L^1(\T^{2d}_{\alpha})}
			\right).
			\end{split}
			\end{equation}
			Taking $K\in \N$ in \eqref{bd8} sufficiently large, we can estimate the first integral by 
			\begin{equation}\label{bd11}
			\begin{split}
			\left| \int_{\T^{2d}_{\alpha}} q_{\beta_1,\beta_2}(\rho) d\rho \right|
			&\leq 
			 \mO(1)\int_{\T^{2d}_{\alpha}} m_{\beta_1,\beta_2}(\rho) d\rho \\
			& \leq \mO(1)  \langle \ell\beta_1- \ell\beta_2\rangle_{\T^{2d}_{\alpha}}^{-K}
				\int_{\R^{2d}}\left\langle\rho\right\rangle^{-K}d\rho \\ 
			& =  \mO(1)  \langle \ell\beta_1-\ell\beta_2\rangle_{\T^{2d}_{\alpha}}^{-K}.
			\end{split}
			\end{equation}
			Here, to see the second inequality, we split $\T^{2d}_{\alpha}\simeq [0,\alpha^{-1/2}]^d$ into 
			$2^d$ translates of the cube $[0,\alpha^{-1/2}/2]^d$ and used the translation invariance 
			of the Lebesgue measure.
			\par
			Similarly, we have that 
			\begin{equation}\label{bd12}
			\| \partial^{\gamma} q_{\beta_1,\beta_2}\|_{L^1(\T^{2d}_{\alpha})}
			=  \mO(1)  \langle \ell\beta_1-\ell\beta_2\rangle_{\T^{2d}_{\alpha}}^{-K}.
			\end{equation}
			Using that $\alpha =  CN^{-1}$, $C\gg1$, we get from \eqref{bd10}, \eqref{bd11}, \eqref{bd12}, 
			that
			\begin{equation}\label{bd13}
			\| (p_{\beta_1})^*_N(p_{\beta_2})_N\|_{\ell^2\to \ell^2} 
			\leq \mO(1)\langle \ell\beta_1-\ell\beta_2\rangle_{\T^{2d}_{\alpha}}^{-K}. 
			\end{equation}
			Using that $\alpha^{-1/2} = \ell M$, we get that for $K\in\N$ sufficiently large, 
			there exists a constant $C>0$, independent of $N$, such that 
			\begin{equation}\label{bd14}
				\sup\limits_{\beta_1\in \Z^{2d}_{M}} \sum_{\beta_2 \in \Z^{2d}_{M}} 
			\| (p_{\beta_1})^*_N(p_{\beta_2})_N\|_{\ell^2\to \ell^2} \leq C,
			\end{equation}
			and similarly  
			\begin{equation}\label{bd15}
				\sup\limits_{\beta_1\in \Z^{2d}_{M}} \sum_{\beta_2 \in \Z^{2d}_{M}} 
			\| (p_{\beta_1})_N(p_{\beta_2})^*_N\|_{\ell^2\to \ell^2} \leq C.
			\end{equation}
			Hence, by the Cotlar-Stein Lemma, see for instance \cite[Lemma 7.2]{DiSj99}, 
			$p_N = \sum_{\beta \in \Z^{2d}_{M}} (p_{\beta})_N$ converges strongly and 
			$\| p_N\|_{\ell^2\to \ell^2} \leq C$.
\end{proof}
\section{Functional calculus}\label{sec:FC}
We begin by recalling the functional calculus for pseudo-differential operators, 
as presented in \cite[Section 8]{DiSj99}, adapted to symbols in the class $S(m,\alpha)$ 
\eqref{qf2}. The following Proposition \ref{prop:fc1} in the case when $N^{-\rho}C \leq \alpha \ll1$, 
$\rho\in]0,1[$, has been proven in \cite[Lemma 2.8]{ZwChrist10}. The following result gives 
and extension including the critical case when $\alpha = CN^{-1}$, $C\gg1$.
\begin{prop}\label{prop:fc1}
	Let $N^{-1}\ll \alpha \ll1$. 
	Let $m\geq 1$ be an order function satisfying \eqref{qf1}, and let $p^w(x,\tth D_x;h,\alpha)$ 
	be a selfadjoint operator with $0 \leq p\in S(m,\alpha)$, and $p \sim \sum_0^{\infty}\tth^{\nu} p_{\nu}$ 
	in $S(m,\alpha)$, and with $p+i$ elliptic. Then, for every $\psi\in C^{\infty}_c(\R)$
	\begin{equation}\label{prop:fc1.1}
		\psi(p_{N,\alpha}) = f_{N,\alpha}, 
		\quad f \in S(1/m,\alpha)
	\end{equation}
	and
	\begin{equation}\label{prop:fc1.2}
		f \sim \sum_0^{\infty} \tth^{\nu} f_{\nu}(\rho;\alpha) \text{ in } S(1/m,\alpha), 
		\quad f_{\nu} \in S(1/m,\alpha).
	\end{equation}
	In particular, $f_0(\rho;\alpha) = \psi(p_0(\rho))$ and 
	\begin{equation}\label{prop:fc1.3}
		 f_{\nu}(\rho;\alpha) = \sum_{j=1}^{2\nu}g_j(\rho,\alpha)\psi^{(j)}(p_0(\rho)),
		 \quad g_j \in S(1,\alpha).
	\end{equation}
\end{prop}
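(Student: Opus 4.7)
The plan is to follow the Helffer-Sj\"ostrand approach as developed in \cite[Section 8]{DiSj99} for pseudodifferential operators on $\R^d$, adapted to the torus via the symbol calculus from Section \ref{sec:qf}. The operator $p_{N,\alpha}$ is self-adjoint on $\mathcal{H}_{\tth,\alpha}^d$ by Lemma \ref{lem:qf1}, so its spectrum is real. For $\psi\in C^\infty_c(\R)$ one chooses a compactly supported almost analytic extension $\widetilde{\psi}\in C^\infty_c(\C)$ with $\bar\partial\widetilde{\psi}(z)=\mO_M(|\Ima z|^M)$ for any $M\in\N$, and writes
\begin{equation*}
        \psi(p_{N,\alpha}) = -\frac{1}{\pi}\int_{\C} \bar\partial\widetilde{\psi}(z)\,(z-p_{N,\alpha})^{-1}\,L(dz).
\end{equation*}
The task is then to realize $(z-p_{N,\alpha})^{-1}$, for $z\in\supp\widetilde{\psi}\setminus\R$, as the quantization of a symbol in $S(1/m,\alpha)$ that depends on $z$ with controlled growth as $\Ima z\to 0$, and to identify the resulting $\alpha$-dependent symbol $f$ after integration.

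The parametrix construction proceeds symbolically. One builds $b(z;\rho,\alpha)\sim \sum_\nu \tth^\nu b_\nu(z;\rho,\alpha)$ in $S(\langle\Ima z\rangle^{-N_0}/m,\alpha)$, starting from $b_0=(p_0-z)^{-1}$, which lies in $S(1/m,\alpha)$ uniformly for $z\in\supp\widetilde{\psi}$ once divided by suitable powers of $\Ima z$, since $p_0\geq 0$ and $p+i$ is elliptic. Using the composition formula \eqref{qf5} together with its asymptotic expansion \eqref{sc4} one solves the equations $(p-z)\#_{\tth} b \sim 1$ recursively, producing
\begin{equation*}
        b_\nu(z;\rho,\alpha) = \sum_{j=1}^{2\nu+1} c_{\nu,j}(\rho;\alpha)(p_0(\rho)-z)^{-j-1},
        \quad c_{\nu,j}\in S(1,\alpha),
\end{equation*}
where the coefficients $c_{\nu,j}$ are polynomials in derivatives of $p_0,p_1,\dots$ The uniformity of the symbol estimates in $\alpha$ follows from the fact that the composition $\#_{\tth}$ on $S(\cdot,\alpha)$ inherits its estimates from $\#_{\tth}$ on $S(\cdot)$, which we have controlled in Section \ref{sec:qf}. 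After Borel summation one obtains $b(z)\in S(1/m,\alpha)$ with symbol estimates blowing up polynomially in $|\Ima z|^{-1}$.

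Writing $B_N(z)=b(z)_{N,\alpha}$, the asymptotic identity yields $(p_{N,\alpha}-z)B_N(z)=I+R_N(z)$ with $R_N(z)=r(z)_{N,\alpha}$ and $r\in \tth^\infty |\Ima z|^{-K} S(1,\alpha)$. Proposition \ref{prop:qf2} (applied uniformly in $\alpha$, including the critical case $\alpha=CN^{-1}$) gives $\|R_N(z)\|_{\ell^2\to\ell^2} = \mO(\tth^\infty|\Ima z|^{-K})$, which is much smaller than $1$ when $|\Ima z|\gg \tth^{\infty}$. Combined with the self-adjoint resolvent bound $\|(z-p_{N,\alpha})^{-1}\|\leq|\Ima z|^{-1}$, one gets $(z-p_{N,\alpha})^{-1}=-B_N(z)+\mO(\tth^\infty|\Ima z|^{-K'})$ in operator norm. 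Substituting in the Helffer-Sj\"ostrand formula and using that $\bar\partial\widetilde{\psi}$ vanishes to infinite order on $\R$ makes the remainder harmless. The contour integral of a symbol-valued function yields a symbol in $S(1/m,\alpha)$ by standard arguments, and term-by-term integration gives \eqref{prop:fc1.2}, \eqref{prop:fc1.3} with $f_0=\psi(p_0)$ and $f_\nu$ of the form \eqref{prop:fc1.3} by computing $-\pi^{-1}\int \bar\partial\widetilde{\psi}(z)(p_0-z)^{-j-1}L(dz)=\psi^{(j)}(p_0)/j!$ via Cauchy-Pompeiu.

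The main obstacle is the critical regime $\alpha=CN^{-1}$ with $C\gg1$. There $\tth=1/(2\pi C)$ is only a small constant, so one cannot rely on naive smallness of $\tth$-remainders in operator norm via crude estimates; one instead needs the boundedness estimate of Proposition \ref{prop:qf2}, which is precisely uniform in $\alpha$ down to the critical scale, to convert symbolic estimates $r\in\tth^\infty S(1,\alpha)$ into operator-norm smallness. A second technical point is ensuring that all constants in the symbolic recursion remain independent of $\alpha$; this is automatic because the symbolic calculus \eqref{qf5} inherits its constants from the $\R^{2d}$ calculus, and the order-function inequality \eqref{qf1} is satisfied with $\alpha$-independent constants by hypothesis. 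Once these two points are controlled, the remainder of the argument is the standard Helffer-Sj\"ostrand/Beals-type parametrix reasoning of \cite[Lemma 2.8]{ZwChrist10}, carried out verbatim.
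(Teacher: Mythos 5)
There is a genuine gap, and it sits exactly at the point you flag as the ``main obstacle''. Your argument is a parametrix-plus-Neumann-series construction: you solve $(p-z)\#_{\tth}b\sim 1$ symbolically and then invert $1+R_N(z)$, with $\|R_N(z)\|=\mO_M(\tth^M|\Ima z|^{-K})$. In the critical regime $\alpha=CN^{-1}$, which the proposition is explicitly designed to cover, $\tth=h/\alpha=1/(2\pi C)$ is a \emph{fixed} constant, so ``$\mO(\tth^\infty)$'' is not an asymptotically small quantity: for each $M$ the bound is $C_M\tth^M|\Ima z|^{-K_M}$ with $M$-dependent constants, the Neumann series for $(1+R_N(z))^{-1}$ cannot be summed once $|\Ima z|$ is below a fixed threshold, and Proposition \ref{prop:qf2} only converts symbol bounds into $\mO(1)$ operator bounds, not into smallness — the smallness has to come from the $\tth^M$ prefactor, which is unavailable here. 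Even granting the almost-analyticity of $\widetilde\psi$ to tame the strip near $\R$, what you obtain is $\psi(p_{N,\alpha})=f_{N,\alpha}+\mathcal{R}$ with $\mathcal{R}$ controlled only in operator norm (and, in the critical case, only by a constant depending on $C$), whereas \eqref{prop:fc1.1}--\eqref{prop:fc1.3} assert an \emph{exact} identity with $f\in S(1/m,\alpha)$ and uniform seminorms. This is not a cosmetic difference: in the proof of Proposition \ref{prop:ld1} the expansion is traced over an $N^d$-dimensional space, and an operator-norm error of fixed size would contribute $\mO(N^d)$ to the trace, destroying the $\mO(N^d\alpha^\kappa\log\frac1\alpha)$ error terms.

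The missing ingredient is the one the paper actually uses: Beals' lemma applied to the \emph{exact} resolvent. Since $p-z$ is elliptic for $\Ima z\neq 0$, Beals' characterization gives $(z-p^w)^{-1}=r^w$ with $r\in S(1)$ and seminorms polynomially controlled in $|\Ima z|^{-1}$, with no remainder to discard and no use of $\tth\to 0$; one then upgrades $r\in S(1)$ to $r\in S(1,\alpha)$ by conjugating with $\e^{\frac{i}{\tth}L(x,\tth D_x)}$ for $x^*,\xi^*\in\alpha^{-1/2}\Z^d$ and using the $\alpha^{-1/2}\Z^{2d}$-periodicity of $p$, so that $r\circ\phi_1=r$ pointwise — a step your proposal does not address, but which is what guarantees that the exact symbol lives in the periodic class with $\alpha$-independent constants. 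After that, the modified Beals estimates and the arguments of \cite[Propositions 8.6, 8.7, (8.19)]{DiSj99} give $\psi(p^w)=f^w$ exactly with $f\in S(1/m,\alpha)$ and the expansion \eqref{prop:fc1.2}--\eqref{prop:fc1.3}, and the identity $r_N=(z-p_N)^{-1}$ lets one restrict through the Helffer--Sj\"ostrand formula to get \eqref{prop:fc1.1}. Your closing claim that the rest is ``the reasoning of \cite[Lemma 2.8]{ZwChrist10} carried out verbatim'' conflates the two routes: that lemma (like the present proposition) rests on the Beals characterization of the exact resolvent, not on a parametrix, and the verbatim transcription is precisely what fails at $\alpha\sim N^{-1}$ unless you replace your Neumann-series step by it.
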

\begin{rem}	
	We recall that in the above Proposition, although not denoted explicitly, 
	$p, p_{\nu}\in S(m,\alpha)$ may depend on $\alpha$, 
	however with the constants in the symbol estimates \eqref{qf2} 
	being independent of $\alpha$. 
\end{rem}
\begin{proof}[Proof of Proposition \ref{prop:fc1}]
	We employ the approach to the functional calculus of pseudo-differential operators 
	via the Helffer-Sj\"ostrand formula: For self-adjoint operators $A$, it follows 
	from the spectral theorem and the fact that $(\pi z)^{-1}$ is a fundamental solution 
	to $\partial_{\bar{z}}$ on $\C$, that 
	\begin{equation}\label{qf10.1}
		\psi(A) = - \frac{1}{\pi} \int (z-A)^{-1} \partial_{\bar{z}} \widetilde{\psi}(z) L(dz).
	\end{equation}
	Here $L(dz)$ denotes the Lebesgue measure on $\C$ and 
	$\widetilde{\psi} \in C_c^{\infty}(\C)$ is an almost holomorphic extension of $\psi$, 
	satisfying $\widetilde{\psi}\!\upharpoonright_{\R} = \psi$ and 
	$\partial_{\bar{z}}\widetilde{\psi}(z) = \mO(|\Ima z|^{\infty})$, see \cite[Chapter 8]{DiSj99} 
	for more details.
	\\
	\par
	Since $p+i$ is elliptic, so is $p-z$ for $|z|\leq C$ and $\Ima z \neq 0$. By Beals' Lemma, 
	see for instance \cite[Proposition 8.3]{DiSj99}, 
	it follows that $(z-p^w)^{-1} = r^w$ with $r\in S(1)$. In fact $r\in S(1,\alpha)$. To see this 
	notice first that conjugating $a^w(x,\tth D_x)$, $a\in S(m')$ for some order function $m'$, 
	with the unitary operator $\exp(i L(x,\tth D_x)/\tth)$, $L(x,\tth D_x) = x^*\cdot x + \xi^*\cdot \tth D_x$, 
	$\xi^*,x^*\in \R^d$, 	we get 
	\begin{equation*}
		\e^{\frac{i}{\tth} L(x,\tth D_x)}a^w(x,\tth D_x) \e^{-\frac{i}{\tth} L(x,\tth D_x)} 
		= (a\circ\phi_1)^w(x,\tth D_x).
	\end{equation*}
	Here $\phi_t$ is the flow at time $t$ associated with the Hamilton vector field $H_L = (\xi^*, -x^*)$ generated by $L$. 
	In particular $a\circ\phi_1(x,\xi)  =  a(x +\xi^*,\xi - x^*) \in S(m')$. Setting $x^* = \alpha^{-1/2}m$, 
	$\xi^* = -\alpha^{-1/2} n$, for $n,m\in \Z^d$, and using the $\alpha^{-1/2}\Z^{2d}$-translation invariance 
	of $p$, we get 
	\begin{equation*}
		(r\circ \phi_1)^w = \e^{\frac{i}{\tth} L}r^w\e^{-\frac{i}{\tth} L} = (z- \e^{\frac{i}{\tth} L}p^w\e^{-\frac{i}{\tth} L} )^{-1} 
		= (z- p^w)^{-1} 
		= r^w. 
	\end{equation*}
	This equality holds in the space of linear continuous maps $\cS\to \cS'$, so by the Schwartz' kernel theorem  
	$r\circ \phi_1 = r$ in $\cS'$, and therefore $r\circ \phi_1 = r$ point-wise since both are smooth functions. 
	Since $n,m$ where chosen arbitrarily, it follows that $r\in C^{\infty}(\T^{2d}_{\alpha})$, and in particular that 
	$r\in S(1,\alpha)$.
	\par
	Next, applying the modified Beals' symbol estimates as in \cite[Proposition 8.6]{DiSj99} and proceeding as in 
	the proof of Proposition 8.7 and of (8.19) \cite{DiSj99}, it follows that $\psi(p^w) = f^w$ with $f\in S(1/m,\alpha)$, 
	satisfying \eqref{prop:fc1.2}, \eqref{prop:fc1.3}.
	\\
	\par
	The above discussion shows that $r_N = (z-p_N)^{-1}$, so by \eqref{qf10.1}, we see that 
	\begin{equation*}
		\psi(p_{N,\alpha}) = \psi(p^w\!\upharpoonright_{\mathcal{H}_{\tth,\alpha}^d }) 
		= \psi(p^w)\!\upharpoonright_{\mathcal{H}_{\tth,\alpha}^d }
	\end{equation*}
	and \eqref{prop:fc1.1} follows.
\end{proof}
\subsection{Phase space dilation}
Until further notice we let $h=1/(2\pi N)$, $0<N\in \N$ and $h \ll \alpha \ll 1$. Let $p\in C^{\infty}(\T^{2d})$ 
with $p \sim p_0 + hp_1 + \dots$ in $S(1,1)$. Then, $P = p^w(x,hD_x;h)$ is a bounded 
operator $L^2(\R^{d})\to L^2(\R^{d})$. Setting $Q=P^*P$, we see, by the semiclassical calculus reviewed in Section \ref{sec:SC},  that the selfadjoint operator 
\begin{equation}\label{qf11.0}
\begin{split}
	&Q=q^w(x,hD_x;h) \text{ has the symbol } \\ 
	&q=\overline{p}\#_h p \sim q_0 + h q_1 + \dots \text{ in } S(1,1), 
	\quad \text{with }q_0 = |p_0|^2.
\end{split}
\end{equation} 
\\
\par 
The transformation 
\begin{equation}\label{qf11}
	(U_{\alpha} \phi)(x) = \alpha^{d/4}\phi (\alpha^{1/2}x), \quad \phi \in \cS(\R^d),
\end{equation}
is a continuous bijection $\cS \to \cS$, and $\cS' \to \cS'$ by duality. Notice that the 
factor $\alpha^{d/4}$ has been chosen so that $U_{\alpha} : L^2 \to L^2$ unitarily 
with $U_{\alpha}^* = U_{\alpha}^{-1} = U_{\alpha^{-1}}$. Moreover, $U_{\alpha}$ maps 
$C^{\infty}(\T^{2d}) \to C^{\infty}(\T^{2d}_{\alpha})$ continuously, 
and $ \mathcal{H}_{h,1}^d \to \mathcal{H}_{\tth,\alpha}^d$ unitarily 
with respect to the inner products introduced in Lemma \ref{lem:qf1}. In particular, we see by  
\eqref{qt3} that 
\begin{equation}\label{qf12}
	U_{\alpha} Q_j^1 = Q_j^{\alpha}. 
\end{equation}
Using $U_{\alpha}$ we perform the phase space 
dilation $\T^{2d} \ni (x,\xi) =\alpha^{1/2}(\widetilde{x},\widetilde{\xi})$, 
$(\widetilde{x},\widetilde{\xi}) \in \T^{2d}_{\alpha}$, and get 
\begin{equation}\label{qf13}
	q^w(x,hD_x;h) = U_{\alpha}^{-1} q^w(\alpha^{1/2}(\widetilde{x},\tth D_{\widetilde{x}});h)U_{\alpha}, 
	\quad \tth = \frac{h}{\alpha}.
\end{equation}
\par
Writing $\widetilde{q}(\widetilde{\rho};h) := q(\alpha^{1/2}\widetilde{\rho};h)\in S(1,\alpha)$, we conclude 
from the mapping properties discussed after \eqref{qf11}, that

\begin{equation}\label{qf14}
	 q_N = q^w(x,hD_x;h)\!\upharpoonright_{\mathcal{H}_{h,1}^d }
	=
	U_{\alpha}^{-1}
	\left( \widetilde{q}^w(\alpha^{1/2}(\widetilde{x},\tth D_{\widetilde{x}});h)
	\!\upharpoonright_{\mathcal{H}	_{\tth,\alpha}^d }\right) 
	U_{\alpha}
	=
	U_{\alpha}^{-1}
	\widetilde{q}_{N,\alpha}
	U_{\alpha}.
\end{equation}
Next, we follow the ideas of \cite[Section 4]{HaSj08}, and 
pass to a new order function adapted to the rescaled 
symbol $\alpha^{-1}\widetilde{q}$. Furthermore, we drop the tilde on the variable $\rho$  
to ease the notation. Since $q_0\in S(1,1)$, we see that 
\begin{equation}\label{psd4}
	 m(\rho) \defeq 1 + \frac{q_0(\alpha^{1/2}\rho)}{\alpha}\geq 1, \quad \rho \in \T^{2d}_{\alpha},
\end{equation}
is a function in $C^{\infty}(\T^{2d}_{\alpha})$. We check that it 
satisfies the estimate for an order function \eqref{qf1}. Indeed, for $|\beta|=1$
\begin{equation*}
	\partial_{\rho}^{\beta} m(\rho)  = \frac{(\partial_{\rho}^{\beta} q_0)(\alpha^{1/2}\rho)}{\alpha^{1/2}} 
	\leq C \frac{q_0^{1/2}(\alpha^{1/2}\rho)}{\alpha^{1/2}} 
	\leq  C \,m(\rho)^{1/2},
\end{equation*}
and for $|\beta|=2$ 
\begin{equation*}
	\partial_{\rho}^{\beta} m(\rho)  = (\partial_{\rho}^{\beta} q_0)(\alpha^{1/2}\rho)
	\leq C,
\end{equation*}
where the constant $C>0$ (not necessarily the same in both inequalities) is 
independent of $\alpha$. Hence, we get by Taylor expansion that for $\rho, \mu \in \R^{2d}$ 
\begin{equation*}
	m(\rho) \leq m(\mu) + C m(\rho)^{1/2}| \rho - \mu| + C| \rho - \mu| ^2,
\end{equation*}
and since $m \geq 1 $ that 
\begin{equation*}
	m(\rho) \leq C \langle \rho - \mu \rangle^2 m(\mu).
\end{equation*}
Using the $\alpha^{-1/2}\Z^{2d}$-translation invariance of $m$, we see that for any $\rho',\mu'\in \T^{2d}_{\alpha}$ and 
any $\gamma \in \Z^{2d}$
\begin{equation*}
	m(\rho') \leq C \langle \rho' - \mu' +\alpha^{-1/2}\gamma \rangle^2 m(\mu).
\end{equation*}
Since this holds for any $\gamma$ it also holds for the infimum over $\gamma\in \Z^{2d}$, and we 
deduce that $m$ satisfies \eqref{qf1}. 
\par
Similarly, we get the following symbol 
estimates (with respect to the new order function $m$ \eqref{psd4})
\begin{equation}\label{psd5}
\begin{split}
		&\widetilde{q}(\rho) \leq m(\rho) \\
		&\partial^{\beta}_{\rho} \widetilde{q}(\rho) =\mO(1) m(\rho)^{1/2}, \quad |\beta| =1 \\ 
		&\partial^{\beta}_{\rho} \widetilde{q}(\rho) = \mO(1) , \quad |\beta| =2 \\ 
		&\partial^{\beta}_{\rho} \widetilde{q}(\rho) = \mO(1) \alpha^{|\beta|/2-1}, \quad |\beta|  \geq 2.
\end{split}
\end{equation}
and in general 
\begin{equation}\label{psd6}
	\partial^{\beta}_{\rho} \widetilde{q}(\rho) = \mO_{\beta}(1)m(\rho).
\end{equation}
We note that in the above equations the constants $\mO(1)$ are independent of $\alpha$. Hence, 
$\widetilde{q} \in S(m,\alpha)$, and setting 
$\widetilde{q}_{\nu}(\rho) = \alpha^{\nu-1} q(\alpha^{1/2}\rho)$, $\nu \in \N$, we see that 
$\widetilde{q} \sim \sum_0^{\infty} \tth^\nu \widetilde{q}_{\nu}$ in $S(m,\alpha)$.
\\
\par
Since $\widetilde{q}_0+i$ is elliptic in $S(m,\alpha)$, we may applying the functional calculus given in  
Proposition \ref{prop:fc1}, and we get for $\psi\in C_c^{\infty}(\R)$ that 
\begin{equation}\label{psd7}
\begin{split}
		&\psi(\alpha^{-1}\widetilde{q}_{N,\alpha}) = f_{N,\alpha}, 	\quad f \in S(1/m,\alpha),  \\
		&f \sim \sum_0^{\infty} \tth^{\nu} f_{\nu}(\rho;\alpha) \text{ in } S(1/m,\alpha), 
		\quad f_{\nu} \in S(1/m,\alpha),
	\end{split}
\end{equation}
with $f_0(\rho;\alpha) = \psi(q_0(\alpha^{1/2}\rho)/\alpha)$ and 
\begin{equation}\label{psd8}
		 f_{\nu}(\rho;\alpha) = \sum_{j=1}^{2\nu}g_j(\rho,\alpha)\psi^{(j)}(q_0(\alpha^{1/2}\rho)/\alpha),
		 \quad g_j \in S(1,\alpha).
\end{equation}
Next, we recall \cite[Proposition 4.1]{HaSj08} translated to our calculus.
\begin{prop}\label{prop:fc2}
	Let $\widetilde{m}\in C^{\infty}(\T^{2d}_{\alpha},]0,\infty[)$ be an order function satisfying \eqref{sc1.1}, so that 
	$\widetilde{m}(\rho)= 1$ when $\alpha^{-1}q_0(\alpha^{1/2}\rho) \leq \sup \supp \psi +1/C$, for some $C>0$ that is  
	independent of $\alpha$. Then, \eqref{psd7} holds in $S(\widetilde{m},\alpha)$, for $h$,$\tth$ sufficiently small. 
\end{prop}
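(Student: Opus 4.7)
The plan is to upgrade the symbol class from $S(1/m, \alpha)$, given by Proposition \ref{prop:fc1}, to $S(\widetilde m, \alpha)$ by exploiting the compact support of $\psi$. By \eqref{psd7}--\eqref{psd8}, we already have $\psi(\alpha^{-1}\widetilde q_{N,\alpha}) = f_{N,\alpha}$ with $f \sim \sum_\nu \tth^\nu f_\nu$ in $S(1/m, \alpha)$, where each $f_\nu(\rho)$ is a finite sum of terms of the form $g_j(\rho;\alpha)\psi^{(j)}(q_0(\alpha^{1/2}\rho)/\alpha)$ with $g_j \in S(1,\alpha)$. The key observation is that every such $f_\nu$ is supported in $\Sigma := \{\rho \in \T^{2d}_\alpha : q_0(\alpha^{1/2}\rho)/\alpha \in \supp \psi\}$, which by hypothesis on $\widetilde m$ lies inside $\{\widetilde m \equiv 1\}$.

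First, I would check that each $f_\nu \in S(\widetilde m, \alpha)$. On $\Sigma$ we have $\widetilde m = 1 \geq 1/m$ (using $m \geq 1$), so the bound $f_\nu \in S(1/m, \alpha)$ immediately gives $|\partial^\beta f_\nu| \leq C_\beta \widetilde m$. Off $\Sigma$, $f_\nu \equiv 0$ and the bound is trivial. Hence every term of the asymptotic expansion lies in $S(\widetilde m, \alpha)$, with constants that are uniform in $\alpha$ thanks to the uniform-in-$\alpha$ estimates \eqref{psd5}--\eqref{psd6}.

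Second, I would control the Borel-sum tail $R_K := f - \sum_{\nu \leq K} \tth^\nu f_\nu$, which by Proposition \ref{prop:fc1} satisfies $R_K \in \tth^{K+1} S(1/m, \alpha)$, so in particular $|\partial^\beta R_K| \leq C_{K,\beta}\,\tth^{K+1}$. To convert this to a bound in $S(\widetilde m, \alpha)$, I would apply the order-function inequality \eqref{qf1} at a base point $\rho_0 \in \Sigma$, where $\widetilde m(\rho_0) = 1$, to obtain the lower bound $\widetilde m(\rho) \geq c\,\langle \rho - \rho_0\rangle_{\T^{2d}_\alpha}^{-N_0}$ uniformly in $\alpha$. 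Since $\T^{2d}_\alpha$ has diameter $\mO(\alpha^{-1/2})$, this yields the uniform lower bound $\widetilde m \geq c'\alpha^{N_0/2}$. Consequently $R_K \in \tth^{K+1} S(\widetilde m, \alpha)$ as soon as $\tth^{K+1} \lesssim \alpha^{N_0/2}$, which holds for $K$ large enough once $h$ and $\tth$ are sufficiently small. Summing the two contributions gives $f \in S(\widetilde m, \alpha)$ together with the expansion \eqref{psd7} interpreted in this class.

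The principal technical obstacle is tracking uniformity in $\alpha$. One must check that the constants in the symbol estimates \eqref{psd5}--\eqref{psd6}, in Proposition \ref{prop:fc1} (notably the ellipticity of $\widetilde q_0 + i$ in $S(m,\alpha)$ and the resulting Beals-type estimates), and in the order-function inequality for $\widetilde m$ are all $\alpha$-independent, so that the absorption of $\tth^{K+1}$ into $\widetilde m$ via the polynomial lower bound is uniform. This is precisely where the smallness assumption on $h$ and $\tth$ in the statement is used.
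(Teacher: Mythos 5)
Your first step is sound: each $f_\nu$ is supported in $\Sigma := \{\rho : q_0(\alpha^{1/2}\rho)/\alpha \in \supp\psi\}$, where $\widetilde m \equiv 1 \geq 1/m$, so the $S(1/m,\alpha)$ bounds on the terms of the expansion upgrade immediately to $S(\widetilde m,\alpha)$ bounds, uniformly in $\alpha$. The gap is in the remainder estimate, and it is fatal precisely in the regime that matters. You convert $|\partial^\beta R_K| \leq C\tth^{K+1}$ into $|\partial^\beta R_K| \leq C\,\tth^{K+1}\alpha^{-N_0/2}\,\widetilde m$ via the crude global lower bound $\widetilde m \geq c'\alpha^{N_0/2}$, and then hope to absorb $\alpha^{-N_0/2}$ by taking $K$ large. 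But $\tth = h/\alpha$ with $h = 1/(2\pi N)$, so in the critical regime $\alpha = c/N$ with $c \gg 1$ fixed, $\tth = 1/(2\pi c)$ is bounded below by a positive constant while $\alpha^{-N_0/2} = (N/c)^{N_0/2} \to \infty$; no fixed $K$ makes $\tth^{K-M}\alpha^{-N_0/2}$ uniformly bounded over admissible $(h,\alpha)$. This is exactly the regime in which Proposition \ref{prop:fc2} is invoked: in step $2$ of the proof of Proposition \ref{prop:ld1}, the rescaling parameter $t$ ranges down to $\alpha$, which can be of order $N^{-1}$.

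The deeper issue is that the $S(\widetilde m,\alpha)$ assertion encodes genuine rapid off-$\Sigma$ decay of the full remainder (the $\widetilde m$ used in the application decays polynomially in $\dist_{\T^{2d}_\alpha}(\cdot,\Sigma)$), and your bootstrap never establishes any such decay of $R_K$ — it only exploits how small $\widetilde m$ can possibly get. The paper's proof, via \cite[Proposition 4.1]{HaSj08}, returns to the Helffer--Sj\"ostrand formula: away from $\Sigma$ the symbol $\widetilde q_0/\alpha - z$ is elliptic in $S(m,\alpha)$, so the resolvent symbol of $(z - \widetilde q_{N,\alpha}/\alpha)^{-1}$ decays off $\Sigma$ with rates controlled in terms of $|\Ima z|$, and integrating against $\partial_{\bar z}\widetilde\psi = \mO(|\Ima z|^\infty)$ yields the $S(\widetilde m,\alpha)$ bound for the full remainder directly, with no $\tth$-versus-$\alpha$ trade-off. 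To close your argument you would need to carry out precisely that localization estimate; the order-function lower bound alone cannot substitute for it.
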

\begin{proof}
	The proof of Proposition 4.1 in \cite{HaSj08} is based on the functional calculus using the Helffer-Sjöstrand 
	formula and on standard semiclassical calculus. It translates directly to our case using the notions 
	discussed in Section \ref{sec:SQ}.
\end{proof}
\subsection{Log-determinant estimates}
The following result is an adaptation of the results of \cite[Section 4]{HaSj08} to our situtation.
\begin{prop}\label{prop:ld1}
Let $0< N\in \N$ and $N^{-1} \ll \alpha \ll 1$, and let $q$ be as in \eqref{qf11.0}. Suppose 
that there exists a $\kappa \in ]0,1]$ such that 
\begin{equation}\label{prop:ld1.1}
		V(t) = \mathrm{Vol}\{\rho \in \T^{2d}; q_0(\rho) \leq t \} = \mO(t^{\kappa}), 
		\quad 0 \leq t \ll1.
\end{equation}
Then, for $\psi\in C^{\infty}_c(\R)$ 
\begin{equation}\label{prop:ld1.2}
\tr \psi\left( \frac{\widetilde{q}_{N,\alpha}}{\alpha}\right)
		 = 
		N^d \left( \int\psi\left(\frac{q_0}{\alpha}\right) d V(q_0)
		+ 
		\mO(N\alpha)^{-1}\alpha^{\kappa} \right),
\end{equation}
Moreover, for $\chi \in C^{\infty}_c([0,\infty[,[0,\infty[)$ with $\chi(0)>0$, 
\begin{equation}\label{prop:ld1.3}
		\log \det \left( q_N + \alpha \chi \left(\frac{q_N}{\alpha}\right )\right) 
		 = N^d \left( \int_{\T^{2d}}  \log q_0(\rho)  \, d\rho + 
		 	 \mO\!\left(\alpha^{\kappa}\log \frac{1}{\alpha}\right)\right).
\end{equation}
\end{prop}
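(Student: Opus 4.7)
\textbf{Plan for \eqref{prop:ld1.2}.} The strategy is semiclassical functional calculus combined with the trace expansion of Proposition \ref{prop:qf1}. Applying Proposition \ref{prop:fc2} to $\psi(\widetilde q_{N,\alpha}/\alpha)$ with $\psi \in C^\infty_c(\R)$ yields $f_{N,\alpha}$ with $f \sim f_0 + \tth f_1 + \dots$ in $S(\widetilde m,\alpha)$, where $f_0(\rho) = \psi(q_0(\alpha^{1/2}\rho)/\alpha)$ and each $f_\nu$ for $\nu \geq 1$ has the form \eqref{psd8}, i.e.\ a finite linear combination of $g_j(\rho)\,\psi^{(j)}(q_0(\alpha^{1/2}\rho)/\alpha)$ with $g_j \in S(1,\alpha)$. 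Proposition \ref{prop:qf1} then gives $\tr f_{N,\alpha} = (N\alpha)^d \int_{\T^{2d}_\alpha} f\,d\rho + r_N$; undoing the phase-space dilation via $\tilde\rho = \alpha^{1/2}\rho$, the leading contribution becomes $N^d \int_{\T^{2d}} \psi(q_0/\alpha)\,d\tilde\rho$. For $\nu \geq 1$ each $f_\nu$ is supported where $q_0 \leq C\alpha$, and hypothesis \eqref{prop:ld1.1} forces $\int_{\T^{2d}_\alpha}|f_\nu|\,d\rho = \mO(\alpha^{\kappa-d})$ after rescaling; multiplying by the prefactor $\tth(N\alpha)^d = (2\pi)^{-1}(N\alpha)^{d-1}$ produces the error $\mO(N^d(N\alpha)^{-1}\alpha^\kappa)$. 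The Weyl remainder $r_N$ from Proposition \ref{prop:qf1} is absorbed by choosing $k$ large enough, using $N\alpha \gg 1$.

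\textbf{Plan for \eqref{prop:ld1.3}.} Since $\chi(0) > 0$ and $q_N \geq 0$, the operator $A := q_N + \alpha\chi(q_N/\alpha)$ is strictly positive, so $\log\det A = \tr\log A$. Factoring out $\alpha$,
\[
\log\det A = N^d\log\alpha + \tr F(q_N/\alpha), \qquad F(s) := \log(s+\chi(s)),
\]
and $F \in C^\infty([0,\infty))$ coincides with $\log s$ for $s > \sup\supp\chi$. I would split $F = \psi_0 F + (1-\psi_0)\log$ via $\psi_0 \in C^\infty_c([0,\infty))$ equal to $1$ on $\supp\chi$; the piece $\psi_0 F$ is compactly supported, so \eqref{prop:ld1.2} yields $\tr(\psi_0 F)(q_N/\alpha) = N^d\int(\psi_0 F)(q_0/\alpha)\,d\rho + \mO(N^d(N\alpha)^{-1}\alpha^\kappa)$. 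On the symbol side, the change of variables $t = \alpha u$ in $\int_{q_0 \leq C\alpha}|\log(q_0/\alpha)|\,d\rho$ combined with $V(t) = \mO(t^\kappa)$ shows $\int |F(q_0/\alpha) - \log(q_0/\alpha)|\,d\rho = \mO(\alpha^\kappa)$, so that $N^d\int F(q_0/\alpha)\,d\rho = N^d\int\log q_0\,d\rho - N^d\log\alpha + \mO(N^d\alpha^\kappa)$; the $\pm N^d\log\alpha$ terms cancel and the main term $N^d\int\log q_0\,d\rho$ emerges.

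\textbf{Main obstacle.} The delicate step is handling $\tr((1-\psi_0)\log)(q_N/\alpha)$ and its comparison with the corresponding symbol-side integral, since $(1-\psi_0)\log$ is smooth but not compactly supported, so Proposition \ref{prop:fc2} does not directly apply. My plan is to exploit $\|q_N\|_{\ell^2\to\ell^2} \leq C$ (Proposition \ref{prop:qf2}), which confines the relevant spectral values of $q_N/\alpha$ to $[0, C/\alpha]$, and then perform a dyadic decomposition $(1-\psi_0(s))\log s = \sum_{j=0}^{J}\theta_j(s)\log s$ with $\theta_j \in C^\infty_c$ supported at scale $s \sim 2^j$ and $J \sim \log_2(1/\alpha)$. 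Each piece $\theta_j\log$ is in $C^\infty_c$, so \eqref{prop:ld1.2} applies, and the small-scale blocks with $2^j\alpha \lesssim 1$ contribute errors proportional to the $\alpha^\kappa$-small volumes $V(2^{j+1}\alpha)$; summing over the $\mO(\log(1/\alpha))$ dyadic levels yields the claimed total error $\mO(N^d\alpha^\kappa\log(1/\alpha))$. The principal technical difficulty is tracking how the $\theta_j$-dependent constants in the functional calculus interact with the $\alpha$-dependent cutoffs without producing extra powers of $\alpha^{-1}$ at large $j$, and in particular ensuring that the blocks with $2^j\alpha \gtrsim 1$ can be treated jointly by a single bulk trace formula rather than summed individually.
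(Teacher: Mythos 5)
Your treatment of \eqref{prop:ld1.2} is essentially the paper's: Proposition \ref{prop:fc2} to get $\psi(\widetilde q_{N,\alpha}/\alpha) = f_{N,\alpha}$ with $f\sim f_0 + \tth f_1 + \dots$ in $S(\widetilde m,\alpha)$, Proposition \ref{prop:qf1} for the trace, rescaling back and controlling the $\nu\geq 1$ terms via the volume bound. The paper organizes this as the general intermediate formula \eqref{ld.5}, but the content is the same.

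For \eqref{prop:ld1.3} your route is genuinely different, and the obstacle you name at the end is exactly the one the paper's proof is designed to avoid. The paper never applies functional calculus to a non-compactly-supported function at the critical scale $\alpha$. Instead it (a) proves the identity at a \emph{fixed} macroscopic scale $\alpha_1$, where $q_0 + \alpha_1\chi(q_0/\alpha_1)$ is uniformly elliptic and Beals' lemma together with a homotopy $t\widehat q + (1-t)$ gives a direct $\log\det$ trace formula with $O(N^{-1})$ error, cf.\ \eqref{ld.0.1}--\eqref{ld.0.2}; and then (b) bridges from $\alpha_1$ down to $\alpha$ by differentiating $\log\det(\widetilde q_{N,t} + t\chi(\widetilde q_{N,t}/t))$ in the dilation parameter $t$. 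Crucially, the $t$-derivative produces (see \eqref{ld.0.3}--\eqref{ld.0.4}) a trace of $\psi(\widetilde q_{N,t}/t)$ with $\psi\in C^\infty_c$, so only compactly supported test functions ever enter the calculus, at every scale. The $\alpha^\kappa\log(1/\alpha)$ loss then arises from integrating $dt/t$ against volume estimates over $[\alpha,\alpha_1]$ (steps \eqref{ld.2}--\eqref{ld.9}) — this plays the same bookkeeping role as your sum over dyadic levels but in continuous form, with the bulk already discharged at $t=\alpha_1$.

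As written, your $\log\det$ argument is a plan with a stated but unresolved gap, not a complete proof. To close it along dyadic lines you would need (i) uniformity in $j$ of the remainder constants from Proposition \ref{prop:fc2} for the family $\psi = \theta_j\log$, which requires rescaling the order function $\widetilde m$ with $j$ since the support and symbol seminorms of $\theta_j\log$ grow with $j$; and (ii) a separate argument for the bulk blocks $2^j\alpha\gtrsim 1$, which in effect reproduces the paper's step at $\alpha_1$. The homotopy-in-$t$ approach buys you both of these at once: it replaces the discrete decomposition and the bulk patching by a single ODE for $\log\det$ in the scale parameter whose right-hand side is always of the clean form $t^{-1}\tr\psi(\widetilde q_{N,t}/t)$.
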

Before, we turn to the proof, let us make the following remark: since the trace 
is invariant under unitary conjugation, \eqref{prop:ld1.2} and \eqref{qf14}  
imply that the number $N(q_N,\alpha)$ of eigenvalues of $q_N$ in the interval $[0,\alpha]$ 
is 
\begin{equation}\label{ld10}
	N(q_N,\alpha) = \mO(N^d \alpha^{\kappa}).
\end{equation}
\begin{proof}[Proof of Proposition \ref{prop:ld1}]
	We essentially follow the proof given in \cite[Section 4]{HaSj08} with some modifications 
	to suit our setting. 
	\\
	\par
	1. Extend $\chi \in C^{\infty}_c(\R;\C)$ in such a way that $\chi(x)>0$ near $0$ and 
	$x+\chi(x) \neq 0$ for all $x\in\R$. Suppose that $\widehat{q}_N >0$, with 
	$0 < 1/C\leq \widehat{q}\in S(1,1)$. 
	Setting $\widehat{q}^t := t\widehat{q} + (1-t) \in S(1,1)$, $t\in [0,1]$, so that $\widehat{q}^0_N 
	= \mathbf{1}_{\mathcal{H}^d_h}$ and $\widehat{q}^1_N =\widehat{q}_N $, we see that 
	$\widehat{q}^t_N \geq 1/C$, uniformly in $t$. Hence, $(\widehat{q}^t_N)^{-1}$ exists and 
	is given by $r^t_N$, for some $r^t\in S(1,1)$, by Beals' Lemma \cite[Proposition 8.3]{DiSj99}. 
	Indeed the periodicity 
	of the symbol $r^t$ follows by an argument similar to the one in the proof of Proposition \ref{prop:fc1}. 
	Moreover, since $\widehat{q}^t \in S(1,1)$ with symbol estimates uniformly with respect to $t$,  
	and since $r^t_N$ is bounded uniformly with respect to $t$, Beals' Lemma shows that 
	$r^t \in S(1,1)$ uniformly with respect to $t$. 
	\par
	Hence, we get by the symbolic calculus \eqref{qf5}, \eqref{sc4} and by 
	Proposition \ref{prop:qf1} that 
	\begin{equation}\label{ld.0}
	\begin{split}
		\frac{d}{dt} \log \det \left( \widehat{q}^t_N \right) 
		&= \tr (\widehat{q}^t_N )^{-1}\frac{d}{dt}  \widehat{q}^t_N \\ 
		&=\tr \left((\widehat{q}_0^t)_N )^{-1}\frac{d}{dt}  (\widehat{q}_0^t)_N + \mO(N^{-1})\right) \\
		&=N^d \left( \int_{\T^{2d}} \frac{d}{dt} \log \widehat{q}_0^t (\rho) \,d\rho + \mO(N^{-1})\right)
	\end{split}
	\end{equation}
	Notice that the $\mO(N^{-1})$ term in the second line means a symbol $b\in N^{-1}S(1,1)$ 
	with symbol estimates 
	uniform with respect to $t\in [0,1]$. Integrating \eqref{ld.0} from $t=0$ to $t=1$, we get 
	\begin{equation}\label{ld.0.1}
		 \log \det \left( \widehat{q}_N \right)  
		 =N^d \left( \int_{\T^{2d}}  \log \widehat{q}_0 (\rho) \,d\rho + \mO(N^{-1})\right).
	\end{equation}
	For fixed $0<\alpha_1\ll1$, the above discussion applies to 
	$q_N + \alpha_1 \chi(\alpha_1^{-1}q_N)$. Hence 
	\begin{equation}\label{ld.0.2}
		\log \det \left( q_N + \alpha_1\chi \left(\frac{q_N}{\alpha_1}\right )\right) 
		 =N^d \left( \int_{\T^{2d}}  \log \left(q_0 (\rho)
		 + \alpha_1\chi \left(\frac{q_0 (\rho)}{\alpha_1}\right ) \right)\, d\rho + \mO(N^{-1})\right).
	\end{equation}
	2. Next, let $ 0< h \ll \alpha \leq t \leq  \alpha_1$. For $t>0$, $E\geq 0$ we have 
	\begin{equation}\label{ld.0.3}
		\frac{d}{dt} \log (E + t\chi(E/t)) = \frac{1}{t} \frac{\chi(E/t)-(E/t)\chi'(E/t)}{E/t + \chi(E/t)}
		\defeq \frac{1}{t}\psi(E/t),
	\end{equation}
	where $\psi \in C^{\infty}_c(\R)$. As in \eqref{qf14}, we have  
	$\widetilde{q}_{N,t}  =  \widetilde{q}^w(t^{1/2}(x,\tth D_{x});h)
	\!\upharpoonright_{\mathcal{H}	_{\tth,t}^d }$, $\tth = h/t$, with 
	$\widetilde{q}(\rho;h) = t^{-1}q(t^{1/2}\rho;h) \in S(m,t)$ 
	and $m$ as in \eqref{psd4}. 
	\par
	Then, we get by standard self-adjoint functional calculus that 
	\begin{equation}\label{ld.0.4}
	\begin{split}
	\frac{d}{dt} \log \det \left( \widetilde{q}_{N,t}  + t\chi(t^{-1} \widetilde{q}_{N,t} ) \right)
	&= 
	\tr ( \widetilde{q}_{N,t}  + t\chi(t^{-1} \widetilde{q}_{N,t} ) )^{-1} 
	\frac{d}{dt}( \widetilde{q}_{N,t}  + t\chi(t^{-1} \widetilde{q}_{N,t} ) \\ 
	& = \tr \frac{1}{t} \psi\left( \frac{\widetilde{q}_{N,t}}{t}\right).
	\end{split}
	\end{equation}
	From Proposition \ref{prop:fc2} in combination with \eqref{psd7}, \eqref{psd8}, we get that 
	\begin{equation*}
	\begin{split}
		&\psi\left( \frac{\widetilde{q}_{N,t}}{t}\right) = f_{N,t}, 	\quad f \in S(\widetilde{m},t),  \\
		&f \sim \frac{q_0(t^{1/2}\rho)}{t} + 
		\tth f_1(\rho;t) +\dots  \text{ in } S(\widetilde{m},t), \quad \tth = \frac{h}{t} = \frac{1}{2\pi N t},
	\end{split}
	\end{equation*}
	with $f_{\nu} \in S(\widetilde{m},t)$ as in \eqref{psd8}  (with $\alpha$ replaced by $t$). 
	Using Proposition \ref{prop:qf1} and \eqref{ld.0.4}, we get for any $\N \ni M,k \geq 1$ 
	\begin{equation}\label{ld.1}
	\begin{split}
		\frac{d}{dt} \log \det \left( \widetilde{q}_{N,t}  +
		 t\chi\left( \frac{\widetilde{q}_{N,t}}{t}\right) 
		 \right) 
		 = &
		\frac{N^d t^{d}}{t}\int_{\T^{2d}_t}\psi\left(\frac{q_0(t^{1/2}\rho)}{t}\right) d\rho  \\
		&+ 
		\mO_M(1)\frac{N^d t^{d}}{t}\tth
		\int_{\T^{2d}_t}\widetilde{\chi}\left(\frac{q_0(t^{1/2}\rho)}{t}\right) d\rho  \\ 
		&+ 
		\mO_M(1)\frac{N^d t^{d}}{t}\tth^M
		\int_{\T^{2d}_t}\widetilde{m}(\rho)d\rho  \\ 
		&+ 
		\mO_k(1)\frac{N^d t^{d}}{t}(Nt^{1/2})^{-k}
		\int_{\T^{2d}_t}\widetilde{m}(\rho) d\rho,
	\end{split}
	\end{equation}
where $\widetilde{\chi}\in C^{\infty}_c(\R;[0,1])$ is equal to $1$ on a small neighbourhood of $[0,\sup\supp \psi]$. 
Here, the third term 
is the error term from the asymptotic expansion of $f$, and 
the fourth term is the error term stated in Proposition \ref{prop:qf1}. Taking, $M=k$, 
we see that the last term can be absorbed in the third term. Next, we set  
\begin{equation*}
	\widetilde{m}(\rho)  = \left( 1 +
	\dist_{\T^{2d}_t}\left( \rho, \supp \widetilde{\chi}
	\left(\frac{q_0(t^{1/2}\bullet)}{t}\right)
	\right)^2\right)^{-M'}
\end{equation*}
for some $1\leq M'\in \N$ to be chosen later on. Here the distance $\dist_{\T^{2d}_t}$ is defined 
similar to \eqref{qf1}.
\\ 
\\
3. We begin with the leading in term on the right hand side in \eqref{ld.1}. The change of variables 
$t^{1/2} \rho = \rho '$, $d\rho = t^{-d}d\rho'$, and \eqref{ld.0.3} yield 
\begin{equation*}
	\frac{N^d t^{d}}{t}\int_{\T^{2d}_t}\psi\left(\frac{q_0(t^{1/2}\rho)}{t}\right) d\rho 
	= 
	N^d\int_{\T^{2d}} \frac{d}{dt}\log (q_0(\rho) + t\chi(q_0(\rho)/t)) d\rho.
\end{equation*}
Integrating this quantity from $t=\alpha$ to $t=\alpha_1$, we find 
 \begin{equation}\label{ld.2}
 	\left. N^d\int_{\T^{2d}}\log (q_0(\rho) + t\chi(q_0(\rho)/t)) d\rho\right|_{t=\alpha}^{\alpha_1}.
\end{equation}
Next, we treat the second term on the right hand side of \eqref{ld.1}. Performing the 
same change of variables as above yields that
\begin{equation*}
\begin{split}
	\mO_M(1)\frac{N^d t^{d}}{t}\tth
		\int_{\T^{2d}_t}\widetilde{\chi}\left(\frac{q_0(t^{1/2}\rho)}{t}\right) d\rho 
	&= 
	\mO_M(1) N^{d-1}t^{-2}
		\int_{\T^{2d}}\widetilde{\chi}\left(\frac{q_0(\rho)}{t}\right) d\rho \\
	&\leq \mO_M(1)N^{d-1}t^{-2} \int_{ \T^{2d}} \mathbf{1}_{q_0 \leq Ct}\,d\rho,
\end{split}
\end{equation*}
for some $C>0$ depending only on $\widetilde{\chi}$. 
The integral from $t=\alpha$ to $t=\alpha_1$ of this quantity, is bounded by 
\begin{equation}\label{ld.3}
\begin{split}
	&\mO_M(1) N^{d-1}
		\int \int_{\T^{2d}}t^{-2} \mathbf{1}_{ \max(q_0/C,\alpha) \leq t \leq \alpha_1} \,d\rho \,dt \\
	& = \mO_M(1) N^{d-1}
	      \int_{\T^{2d}} \left( \frac{1}{\max(q_0/C,\alpha)} - \frac{1}{\alpha_1}\right)
	       \mathbf{1}_{ q_0 \leq C\alpha_1} \,d\rho \\
	& \leq \mO_M(1) N^{d-1}
		 \int_{\T^{2d}}  \frac{1}{\alpha + q_0(\rho)} 
	       \mathbf{1}_{ q_0 \leq C\alpha_1} \,d\rho \\
	& = \mO_M(1) N^{d-1} \int_0^{C\alpha_1} \frac{1}{\alpha + t } \, dV(t),
\end{split}
\end{equation}
where $dV = (q_0)_*(d\rho)$ is the push-forward measure of the Lebesgues measure $d\rho $ 
on $\T^{2d}$ by $q_0$, with distribution function 
\begin{equation*}
	V(t) = \int_{\T^{2d}}  \mathbf{1}_{ q_0(\rho) \leq t} \,d\rho, \quad 0 \leq t \leq C\alpha_1 \ll1,
\end{equation*}
which is an increasing and right-continuous function. 
\\
\par
4. We turn to the third term on the right hand side of \eqref{ld.1}. Set  
\begin{equation*}
	d_t(\rho) = \dist_{\T^{2d}_t}\left( \rho, \{ \rho'\in \T^{2d}_t ; t^{-1}q_0(t^{1/2}\rho')\leq C\}\right) 
\end{equation*}
for some sufficiently large $C>0$. For $\rho' \in \T^{2d}_t $ with 
$q_0(t^{1/2}\rho') \leq Ct$, it follows by \eqref{psd5}, \eqref{psd4} that 
$\nabla q_0(t^{1/2}\rho') /t= \mO(1)$ uniformly in $t$. Similarly, 
$\nabla^2 q_0(t^{1/2}\rho') /t= \mO(1)$. Hence, by Taylor expansion we 
get for any $\rho \in \T^{2d}_t$ 
\begin{equation*}
	t^{-1} q_0(t^{1/2}\rho) 
	\leq \mO(1) ( 1 +d_t(\rho) + d_t(\rho)^2)	
	\leq \mO(1) ( 1 +d_t(\rho)^2).
\end{equation*}
We split the integral in the third term in \eqref{ld.1} into two parts: one where $q_0(t^{1/2}\rho) \leq C\alpha_1$, 
and one where $q_0(t^{1/2}\rho) > C\alpha_1$, for some $C>0$. The first part is bounded by 
\begin{equation}\label{ld.5.1}
\begin{split}
	\mO_M(1)&\frac{N^d t^{d}}{t}\tth^M
		\int_{\T^{2d}_t } \widetilde{m}(\rho) 
		\mathbf{1}_{q_0(t^{1/2}\rho) \leq C\alpha_1 }d\rho \\
		&\leq \frac{\mO_{M,M'}(1) (Nt)^{d-M}}{t} 
		\int_{\T^{2d}_t} ( 1+t^{-1} q_0(t^{1/2}\rho))^{-M'}\mathbf{1}_{q_0(t^{1/2}\rho) \leq C\alpha_1 }d\rho\\
		&
		\leq \frac{\mO_{M,M'}(1)N^d }{t(Nt)^{M}} 
		\int_{\T^{2d}} ( 1+t^{-1} q_0(\rho))^{-M'}\mathbf{1}_{q_0(\rho) \leq C\alpha_1 }d\rho.
\end{split}
\end{equation}
The second part is 
\begin{equation}\label{ld.5.2}
\begin{split}
	\mO_M(1)&\frac{N^d t^{d}}{t}\tth^M
		\int_{\T^{2d}_t } \widetilde{m}(\rho) 
		\mathbf{1}_{q_0(t^{1/2}\rho) > C\alpha_1 }d\rho \\
		&
		\leq \frac{\mO_{M,M'}(1)N^d }{t(Nt)^{M}} 
		\int_{\T^{2d}} ( t^{-1} q_0(\rho))^{-M'}\mathbf{1}_{q_0(\rho) > C\alpha_1 }d\rho \\ 
		& \leq \frac{\mO_{M,M'}(1)N^d }{t(Nt)^{M}} t^{M'} \\ 
		& =\mO_M(N^{d-M}),
\end{split}
\end{equation}
where we chose $M'=M+1$ and the last estimate is uniform in $t$. Going back to \eqref{ld.5.1}, 
we integrate it from $t=\alpha$ to $t=\alpha_1$, exchange the integrals, 
and, keeping in mind that $q_0(\rho) \leq C\alpha_1$,  we estimate
\begin{equation}\label{ld.5.3}
		\int_{\alpha}^{\alpha_1} \frac{1}{t^{M+1}( 1+t^{-1} q_0(\rho))^{M'}}dt,
\end{equation}
with $M'=M+1$. 
When $q_0\leq \alpha$, then 
\begin{equation*}
		\int_{\alpha}^{\alpha_1} 
		 \frac{1}{t^{M+1}( 1+t^{-1} q_0)^{M+1}}dt 
		 = \mO_M(1)\int_{\alpha}^{\alpha_1} t^{-M-1}dt = \mO_M(\alpha^{-M}).
\end{equation*}
When $\alpha \leq q_0 \leq \alpha_1$,, then 
\begin{equation*}
\begin{split}
		\left(\int_{\alpha}^{q_0} + \int_{q_0}^{\alpha_1} \right)
		 \frac{1}{t^{M+1}( 1+t^{-1} q_0)^{M+1}}dt 
		 = \mO_M(1)q_0^{-M}.
\end{split}
\end{equation*}
When $\alpha_1 \leq q_0 $,  then \eqref{ld.5.3} is bounded from above by 
 \begin{equation*}
\begin{split}
		\int_{\alpha}^{\alpha_1} 
		 \frac{1}{q_0^{M+1}}dt 
		 = \mO_M(1)
\end{split}
\end{equation*}
In conclusion the integral from $t=\alpha$ to $t=\alpha_1$ of \eqref{ld.5.1} 
is 
\begin{equation}\label{ld.5.4}
	        \mO_{M}(1)N^d
		\int_0^{C\alpha_1}
		 \left( \frac{N^{-1}}{\alpha+ q_0}\right)^{M}dV(q_0).
\end{equation}
Summing up what he have shown so far, we get by \eqref{ld.1},  \eqref{ld.0.2},  \eqref{ld.2}, as well as 
\eqref{ld.3}, \eqref{ld.5.2} and \eqref{ld.5.4} that for any $2\leq M \in \N$ 
\begin{equation}\label{ld.6}
\begin{split}
		\log \det \left( q_N + \alpha \chi \left(\frac{q_N}{\alpha}\right )\right) 
		 = &N^d \left( \int_{\T^{2d}}  \log \left( q_0(\rho) + 
		 	\alpha \chi \left(\frac{q_0(\rho)}{\alpha}\right )\right)  \, d\rho + \mO(N^{-1})\right. \\
		    &+\mO_M(1)  \int_0^{C\alpha_1} \frac{N^{-1}}{\alpha + q_0 } \, dV(q_0), \\ 
		    &+   \left.     \mO_{M}(1)
		\int_0^{C\alpha_1} 
		 \left( \frac{N^{-1}}{\alpha+ q_0}\right)^{M}dV(q_0)+ \mO_M(N^{-M})\right).
\end{split}
\end{equation}
Let us remark at this point that most of the above discussion 
applies to general $\psi\in C^{\infty}_c(\R)$. Thus, using \eqref{ld.1}, \eqref{ld.0.4}, 
\eqref{ld.5.1}, \eqref{ld.5.2}, we get for any $M,M'\geq 1$ that 
\begin{equation}\label{ld.5}
\begin{split}
\tr \psi\left( \frac{\widetilde{q}_{N,\alpha}}{\alpha}\right)
		 = &
		N^d \left( \int\psi\left(\frac{q_0}{\alpha}\right) d V(q_0)
		+ 
		\mO\!\left((N\alpha)^{-1}\right)
		\int\widetilde{\chi}\left(\frac{q_0}{\alpha}\right) d V(q_0)  \right.\\ 
		&+\left. 
		\mO_{M,M'}\!\left((N\alpha)^{-M}\right)
		\int_0^{C\alpha_1}\left(1 +\frac{q_0}{\alpha} \right)^{-M'}\,dV(q_0)
		+\mO_{M,M'}((N\alpha)^{-M}\alpha^{M'}) \right)
	\end{split}
\end{equation}
5. Splitting the first integral in \eqref{ld.6} into one where $q_0 \leq C\alpha$ and one 
where $q_0 > C\alpha$, we see that 
\begin{equation}\label{ld.6.1}
\begin{split}
\int_{\T^{2d}}  \log \left( q_0(\rho) + 
		 	\alpha \chi \left(\frac{q_0(\rho)}{\alpha}\right )\right)  \, d\rho 
			=\int_{\T^{2d}}  \log q_0(\rho)   \, d\rho  + 
			\mO(1) \int_{0}^{C\alpha}  \log  \frac{1}{q_0} dV(q_0).
\end{split}
\end{equation}
To estimate the second term on the right hand side of \eqref{ld.6.1}, 
we use \eqref{prop:ld1.1} and integration by parts, and get that 
\begin{equation}\label{ld.7}
\begin{split}
	 \int_0^{C\alpha} \log x\,dV(x) &=
	V(C\alpha) \log C\alpha
	  + \int_0^{C\alpha} \frac{1}{x} V(x)dx \\ 
	  & \leq \mO(\alpha^{\kappa}\log \frac{1}{\alpha}).
\end{split}
\end{equation}
Similarly, we get that the second integral in \eqref{ld.6} is
\begin{equation}\label{ld.8}
\begin{split}
	 \int_0^{C\alpha_1} \frac{N^{-1}}{\alpha + t } \, dV(t) &=
	 \frac{N^{-1}}{\alpha + C\alpha_1} V(C\alpha_1) 
	  + \int_0^{C\alpha_1} \frac{N^{-1}}{(\alpha + t)^2 } V(t)dt \\ 
	  & \leq \mO(N^{-1}) \left( \ \frac{(C\alpha_1)^{\kappa}}{\alpha + C\alpha_1}
	  +\alpha^{\kappa -1}\int_0^{C\alpha_1/\alpha} \frac{t^{\kappa}}{(1 + t)^2 }dt \right) \\
	&  =\begin{cases}
		\mO(N^{-1}\log\frac{1}{\alpha}), \quad \kappa =1 \\ 
		\mO(N^{-1}\alpha^{\kappa -1}), \quad  0<\kappa < 1. 
	 	\end{cases}
\end{split}
\end{equation}
The last integral in \eqref{ld.6} gives
\begin{equation}\label{ld.9}
\begin{split}
		\int_0^{C\alpha_1}
		 \left(\frac{N^{-1}}{\alpha+ q_0}\right)^{M}dV(q_0) 
		& =  \left.\left(\frac{N^{-1}}{\alpha+ q_0}\right)^{M}V(q_0) \right|_{q_0 =0}^{C\alpha_1} 
		    + N^{-M} \int_0^{C\alpha_1}
		 \left(\alpha+ q_0\right)^{-M-1} V(q_0) dq_0 \\ 
		 & = \mO_M(N^{-M})  + \mO(1) (N\alpha)^{-M}\alpha ^{\kappa} \int_0^{C\alpha_1/\alpha}
		 \left(1+ x\right)^{-M-1} x^{\kappa} dx \\ 
		 & = \mO_M(1)\alpha^{\kappa},
\end{split}
\end{equation}
where in the last line we used that $N^{-1} \ll \alpha \ll 1$. Combining \eqref{ld.6} with (\ref{ld.6.1}-\ref{ld.9}) we 
obtain \eqref{prop:ld1.3}.
\\
\par
6. Finally, we turn back to \eqref{ld.5}. Using that 
\begin{equation*}
	\int\widetilde{\chi}\left(\frac{q_0}{\alpha}\right) d V(q_0)  
	\leq \mO(1)
	\int_0^{C\alpha} d V(q_0)  
	\leq 
	\mO(1) \alpha^{\kappa},
\end{equation*}
and taking $M=1$, $M'=2$, we deduce from \eqref{ld.5} in 
combination with \eqref{ld.9} that 
\begin{equation*}
\tr \psi\left( \frac{\widetilde{q}_{N,\alpha}}{\alpha}\right)
		 = 
		N^d \left( \int\psi\left(\frac{q_0}{\alpha}\right) d V(q_0)
		+ 
		\mO(N\alpha)^{-1}\alpha^{\kappa} \right),
\end{equation*}
which completes the proof of the proposition.
\end{proof}
\section{Grushin problem}\label{sec:GP}
We begin by giving a short overview on Grushin problems. For more 
general details see for instance the review \cite{SjZw07}. The central 
idea is to set up a problem of the form 
\begin{equation*}
 \begin{pmatrix}
  P(z) & R_- \\ 
  R_+ & 0\\
 \end{pmatrix}
 :
 \mathcal{H}_1\oplus \mathcal{H}_- 
 \longrightarrow \mathcal{H}_2\oplus \mathcal{H}_+,
\end{equation*}
where $P(z)$ is the operator under investigation and $R_{\pm}$ are 
suitably chosen so that the above matrix of operators is bijective. If 
$\dim\mathcal{H}_-  = \dim\mathcal{H}_+ < \infty$, one typically 
writes 
\begin{equation*}
 \begin{pmatrix}
  P(z) & R_- \\ 
  R_+ & 0\\
 \end{pmatrix}^{-1}
 =
 \begin{pmatrix}
  E(z) & E_{+}(z) \\ 
  E_{-}(z) & E_{-+}(z) \\
 \end{pmatrix}.
\end{equation*}
The key observation goes back to the Schur complement formula or, 
equivalently, the Lyapunov-Schmidt bifurcation method: 
the operator $P(z): \mathcal{H}_1 \rightarrow \mathcal{H}_2$ 
is invertible if and only if the finite dimensional matrix 
$E_{-+}(z)$ is invertible and when this is the case, 
we have 
\begin{equation*}
  P^{-1}(z) = E(z) - E_{+}(z) E_{-+}^{-1}(z) E_{-}(z), 
  \quad 
  E_{-+}^{-1} = - R_+ P^{-1}(z)R_-.
\end{equation*}
\subsection{Grushin Problem for the unperturbed operator}\label{sec:GP_s1}
In this section we will set up a Grushin problem as in  \cite{HaSj08}, using the left and 
right singular vectors of the unperturbed operator. 
We keep $h=1/(2\pi N)$, $0<N\in \N$ and $N^{-1} \ll \alpha \ll 1$. Let $p\in C^{\infty}(\T^{2d})$ with 
\begin{equation}\label{gp1}
	p \sim p_0 + hp_1 + \dots \text{ in } S(1,1),
\end{equation}
Until further notice we identify $\mathcal{H}_h^d \simeq \C^{N^d}$, as in \eqref{qf10}.  
By the discussion in Section \ref{sec:qf}, we have that $P=p_N$ is a bounded 
operator $\ell^2 \to\ell^2$. For $z\in \C$ let 
 \begin{equation}\label{gp2}
	0\leq t_1^2 \leq \dots \leq t_{N^d}^2 
\end{equation}
denote the eigenvalues of $Q=(P-z)^*(P-z)$ with an associated orthonormal basis 
of eigenfunctions $e_1,\dots,e_{N^d}\in \mathcal{H}_h^d$. 
\\
\par
Since $P$ is Fredholm 
of index $0$, the spectra of $Q$ and $Q' =(P-z)(P-z)^*$ are equal, and we can find 
an orthonormal basis $f_1,\dots,f_{N^d}$ of $\mathcal{H}_h^d$ comprised of 
eigenfunctions of $Q'$ associated with the eigenvalues \eqref{gp2}, such that 
 \begin{equation}\label{gp3}
	(P-z)^* f_i = t_i e_i, \quad (P-z)e_i = t_i f_i, \quad i=1,\dots, N^d.
\end{equation}
Indeed, let $f_1,\dots,f_{N_0}$ denote an orthonormal basis of the kernel 
$\mathcal{N}(P-z)^*$, and set $f_i = t_i^{-1}(P-z)e_i$, for $N_0 < i \leq N^d$. 
For $i> N_0$ the $f_i$ are well-defined due to the equality of the spectra 
of $Q$ and $Q'$ and since $t_i >0$. Moreover, one easily checks that they are orthonormal. Furthermore, 
for $0\leq i \leq N_0$ and $j>N_0$, we have that 
$( f_i| f_j) = ( (P-z)^*f_i | t_j^{-1} e_j) = (0| t_j^{-1} e_j) =0$. 
\\
\par
 Let $M>0$ be so that $0\leq t_1^2 \leq \dots \leq t_M^2 \leq \alpha$, and let 
 $\delta_i$, $1\leq i \leq M$, denote and orthonormal basis of $\C^M$. We suppose 
 that $Q$ satisfies assumption \eqref{prop:ld1.1}. Then, we know from \eqref{ld10} that 
  \begin{equation}\label{gp4.0}
		M = \mO(N^d\alpha^{\kappa}). 
\end{equation}
 It is clear from the proof of Proposition \ref{prop:ld1} that when $z$ varies in some 
 compact set $K\Subset \C$, and condition \eqref{prop:ld1.1} is assumed to be 
 uniform in $z\in K$ (compare with \eqref{i3}), then the estimate \eqref{gp4.0} is 
 uniform in $z\in K$. 
 \\
 \par 
 Continuing, we put
 \begin{equation}\label{gp4}
		R_+:  ~\mathcal{H}_h^d \longrightarrow \C^M : ~~u \longmapsto  \sum_1^M (u|e_i)\,\delta_i,
\end{equation}
and 
\begin{equation}\label{gp5}
		R_-:  ~\C^M  \longrightarrow  \mathcal{H}_h^d:~~u_- \longmapsto  \sum_1^M u_-(i)f_i,
\end{equation}
 where $u_-(i)=(u_-|\delta_i)$. The Grushin problem 
 \begin{equation}\label{gp6}
		\mathcal{P}(z) \defeq \begin{pmatrix} 
		P-z  & R_- \\ 
		R_+ & 0\\
		\end{pmatrix} :  \mathcal{H}_h^d \times \C^M \longrightarrow \mathcal{H}_h^d \times \C^M,
\end{equation}
is bijective with inverse $\mathcal{E}(z)$. Indeed, for given 
$(v,v_+)\in \mathcal{H}_h^d \times \C^M$ we want to solve 
 \begin{equation}\label{gp7}
		\mathcal{P}(z) 
		 \begin{pmatrix} u \\ u_- 
		\end{pmatrix} =
		 \begin{pmatrix} v \\ v_+
		\end{pmatrix}.
\end{equation}
Write $u= \sum_1^{N^d} u(j) e_j$, $v= \sum_1^{N^d} v(j) f_j$. Then we see by \eqref{gp3} 
that \eqref{gp7} is equivalent to 
\begin{equation*}
		\begin{cases}
			\sum_1^{N^d} t_i u_i f_i + \sum_1^M u_-(j)f_j = \sum_1^{N^d} v_j f_j \\ 
			u_j = v_+(j), \quad j =1, \dots, M,
		\end{cases}
\end{equation*}
which is equivalent to 
\begin{equation}\label{gp7.1}
		\begin{cases}
			\sum_{M+1}^{N^d} t_i u_i f_i  = \sum_{M+1}^{N^d} v_j f_j, \\[1.5ex] 
			 \begin{pmatrix} t_i & 1 \\ 1 & 0 \\
			\end{pmatrix}
			 \begin{pmatrix} u_i \\ u_-(i)\\
			\end{pmatrix}
			= 
			 \begin{pmatrix} v_i \\ v_+(i)
			\end{pmatrix}, \quad i=1,\dots, M.
		\end{cases}
\end{equation}
Since 
\begin{equation*}
	 \begin{pmatrix} t_i & 1 \\ 1 & 0 \\
	\end{pmatrix}^{-1} 
	=
	 \begin{pmatrix} 0& 1 \\ 1 & -t_i  \\
	\end{pmatrix},
\end{equation*}
it follows that
\begin{equation}\label{gp8}
		\mathcal{P}^{-1} (z)= \mathcal{E}(z) =
		\begin{pmatrix} 
		E(z)  & E_+(z)\\ 
		E_-(z) & E_{-+}(z)\\
		\end{pmatrix} 
\end{equation}
where 
\begin{equation}\label{gp9}
\begin{split}
		&E(z) = \sum_{M+1}^{N^d} \frac{1}{t_i} e_i \circ f_i, \quad 
	         E_+(z) = \sum_1^M e_i \circ \delta_i^*,  \\
		&E_-(z) =\sum_1^M  \delta_i\circ f_i^*, \quad
		 E_{-+}(z) = - \sum_1^M t_j \delta_j\circ\delta_j^*.
\end{split}
\end{equation}
Furthermore, 
\begin{equation}\label{gp10}
	\|E(z) \| \leq \frac{1}{\sqrt{\alpha}}, \quad \| E_{\pm } \| =1, \quad 
	\| E_{-+}\| \leq \sqrt{\alpha}.
\end{equation}
It follows from \eqref{gp7.1} that 
\begin{equation}\label{gp11}
	|\det \mathcal{P}(z) |^2= \prod_{M+1}^{N^d} t^2_i 
	= \alpha^{-M} \prod_1^{N^d} \mathbf{1}_{\alpha}(t^2_i), \quad 
	\mathbf{1}_{\alpha}(x)=\max(x,\alpha),
\end{equation}
which can be written as 
\begin{equation}\label{gp11.1}
	|\det \mathcal{P}(z) |^2 =
	\alpha^{-M} 
	 \det 	\mathbf{1}_{\alpha}(Q).
\end{equation}
The Schur complement formula applied to $\mathcal{P}$ and $\mathcal{E}$ yields
\begin{equation}\label{gp12}
	\log | \det (P-z)| = \log |\det \mathcal{P}(z)| + \log |\det E_{-+}(z)|.
\end{equation}
Next, we estimate $\log |\det \mathcal{P}(z)| $. Let $\chi\in C^{\infty}_c(\R)$ 
be supported in $[0,2]$, $0\leq \chi \leq 1$ and equal to $1$ on $[0,1]$. 
Then, for $x\geq 0$
\begin{equation}\label{gp13}
	x+ \frac{\alpha}{4}\chi\left( \frac{x}{\alpha}\right ) 
	\leq 
	 \mathbf{1}_{\alpha}(x)
	\leq 
	x + \alpha\chi\left( \frac{x}{\alpha}\right ).
\end{equation}
Similar to \eqref{qf11.0}, the principal symbol of $Q$ is $|p_0-z|^2$. Then, 
Proposition \ref{prop:ld1} together with \eqref{gp11.1}, \eqref{gp13}, \eqref{gp4.0}, yields that 
\begin{equation}\label{gp14}
\begin{split}
	\log |\det \mathcal{P}(z)| ^2 
	&= \log \det \left(\mathbf{1}_{\alpha}(Q) \right ) + M\log\frac{1}{\alpha} \\
	&= N^d \left( \int_{\T^{2d}}  \log |p_0(\rho)-z|^2  \, d\rho + 
		 	 \mO\!\left(\alpha^{\kappa}\log \frac{1}{\alpha}\right)\right).
\end{split}
\end{equation}
\subsection{Grushin Problem for the perturbed operator}\label{sec:GPP}
Let $Q_{\omega}:\mathcal{H}_h^d \to \mathcal{H}_h^d $ be a linear operator 
(i.e. an $N^d\times N^d$ matrix since $\mathcal{H}_h^d\simeq \C^{N^d}$). In this section $Q_{\omega}$ 
can be considered to be deterministic, its randomness will only be important 
later on. We are interested in studying the eigenvalues of 
\begin{equation}\label{gpp2}
	P^{\delta}  = P + \delta Q_{\omega}, \quad 0\leq \delta \ll 1,
\end{equation}
where $P=p_N$ as in Section \ref{sec:GP_s1}. For this purpose 
we set up a Grushin problem for the perturbed operator. 
\\
\par
Let $R_{\pm}$ be as in \eqref{gp4}, \eqref{gp5}, and for $z\in\C$ put
 \begin{equation}\label{gpp3}
		\mathcal{P}^{\delta}(z) \defeq \begin{pmatrix} 
		P^{\delta}-z  & R_- \\ 
		R_+ & 0\\
		\end{pmatrix} :  \mathcal{H}_h^d \times \C^M \longrightarrow \mathcal{H}_h^d \times \C^M.
\end{equation}
Suppose that for some $q>0$ and sufficiently large $C_1>0$ 
\begin{equation}\label{gpp3.1}
	\| Q_{\omega} \| \leq C_1 N^q,
\end{equation}
and suppose that $\delta \geq 0$ is such that
\begin{equation}\label{gpp4}
	\delta \| Q_{\omega} \| \alpha^{-1/2}\leq \frac{1}{2}.
\end{equation}
Using \eqref{gp8} and a Neumann series argument, we see that 
\begin{equation}\label{gpp5}
	\mathcal{P}^{\delta}\mathcal{E} = 1 + 
	\begin{pmatrix} 
		\delta Q_{\omega} E & \delta Q_{\omega}E_+ \\ 
		0 & 0\\
	\end{pmatrix}
\end{equation}
is bijective with inverse 
\begin{equation}\label{gpp6}
	(\mathcal{P}^{\delta}\mathcal{E} )^{-1}
	= 1 + \sum_1^{\infty}(-\delta)^n
	\begin{pmatrix} 
		(Q_{\omega} E)^n & (Q_{\omega}E)^{n-1} Q_{\omega}E_+ \\ 
		0 & 0\\
	\end{pmatrix},
\end{equation}
of norm $\leq \mO(1)$. Thus, 
$\mathcal{P}^{\delta}$ is bijective with inverse 
 \begin{equation}\label{gpp7}
 \begin{split}
		\mathcal{E}^{\delta}(z)
		=\mathcal{E} (\mathcal{P}^{\delta}\mathcal{E} )^{-1}
		&=\mathcal{E} + 
		 \sum_1^{\infty}(-1)^n 
		\begin{pmatrix} 
		E(\delta Q_{\omega} E)^n & (\delta EQ_{\omega})^{n}E_+ \\ 
		E_-(\delta Q_{\omega}E)^{n}  & E_-(\delta Q_{\omega}E)^{n-1}(\delta Q_{\omega})E_+\\
		\end{pmatrix}
		\\
		& \defeq \begin{pmatrix} 
		E^{\delta}(z)  & E^{\delta}_+(z)\\ 
		E^{\delta}_-(z)& E^{\delta}_{-+}(z)\\
		\end{pmatrix},
\end{split}
\end{equation}
where by \eqref{gpp4}, \eqref{gp10}, 
 \begin{equation}\label{gpp8}
 \begin{split}
		&\| E^{\delta} \| = \|  E( 1+ \delta Q_{\omega}E)^{-1} \| \leq 2 \|E\| \leq 2 \alpha^{-1/2}, \\
		&\| E_+^{\delta} \| = \|  ( 1+ \delta Q_{\omega}E)^{-1}E_+ \| \leq 2 \|E_+\| \leq 2,  \\
		&\| E_-^{\delta} \| = \| E_- ( 1+ \delta Q_{\omega}E)^{-1} \| \leq 2\|E_-\| \leq 2,  \\
		&\| E_{-+}^{\delta} -E_{-+}\| = 
		\| E_- ( 1+ \delta Q_{\omega}E)^{-1}\delta Q_{\omega}E_+ \| \leq 2 \|\delta Q_{\omega}\| \ll 1.  \\
\end{split}
\end{equation}
By the Schur complement formula applied to $\mathcal{P}^{\delta}$ and $\mathcal{E}^{\delta}$, 
we see that 
 \begin{equation}\label{gpp8.2a} 
	\log |\det (P^{\delta}-z) | = \log |\det \mathcal{P}^{\delta}(z)| + \log |\det E_{-+}^{\delta}(z)|.
\end{equation}
Since $\frac{d}{d\delta} \log \det \mathcal{P}^{\delta} 
= \tr \mathcal{E}^{\delta}\frac{d}{d\delta} \mathcal{P}^{\delta}$, we see by 
\eqref{gpp3}, \eqref{gpp7}, \eqref{gpp8}, that
 \begin{equation}\label{gpp9}
 \begin{split}
 	\left| \log |\det \mathcal{E}^{\delta}| - \log |\det \mathcal{E}| \right| 
	&=
	\left| \log |\det \mathcal{P}^{\delta}| - \log |\det \mathcal{P}| \right| \\
	&= \left| \Rea \int_0^{\delta} \tr\left({E}^{\tau}Q_{\omega}\right)d\tau \right| 
	\leq \mO( \delta \alpha^{-1/2} \| Q\|_{\tr}).
\end{split}
\end{equation}
Since $\| Q\|_{\tr} \leq N^{d/2} \| Q\|_{\mathrm{HS}}$, we get by combining \eqref{gpp9} with \eqref{gp14}, 
that 
 \begin{equation}\label{gpp10}
	\log |\det \mathcal{P}^{\delta}(z)| 
	= N^d \left( \int_{\T^{2d}}  \log |p_0(\rho)-z|  \, d\rho + 
		 	 \mO(\alpha^{\kappa}\log \frac{1}{\alpha})+\mO( \delta N^{-d/2}\alpha^{-1/2} \| Q\|_{\mathrm{HS}})\right).
\end{equation}
Under the assumption \eqref{gpp4} 
we have by \eqref{gp10}, \eqref{gpp8}, that 
$\| E^{\delta}_{-+}\| \leq \alpha^{1/2} + \mO(\| \delta Q_{\omega}\| )$, which in view of 
\eqref{gp4.0} and \eqref{gpp4} yields the following upper bound 
 \begin{equation}\label{gpp11}
	\log |\det E_{-+}^{\delta}(z)| \leq \mO( N^d\alpha^{\kappa}) | \log\alpha|.
\end{equation}
We end this section with a general result on the singular values of Grushin problems. 
\begin{lem}\label{gpp:lem1}
	Let $\mathcal{H}$ be an $N$-dimensional complex Hilbert spaces, and 
	let $N\geq M>0$. Suppose that 	
	 \begin{equation*}
		\mathcal{P} = \begin{pmatrix} 
		P & R_- \\ 
		R_+ & 0\\
		\end{pmatrix} :  \mathcal{H} \times \C^M \longrightarrow  \mathcal{H} \times \C^M
	\end{equation*}
	is a bijective matrix of linear operators, with inverse 
	 \begin{equation*}
		\mathcal{E}
		=
		 \begin{pmatrix} 
		E  & E_+\\ 
		E_-& E_{-+}\\
		\end{pmatrix}.
	\end{equation*}
	Let $0 \leq t_1(P) \leq \dots \leq t_N(P)$ denote the eigenvalues of $(P^*P)^{1/2}$, and 
	let $0 \leq t_1(E_{-+}) \leq \dots \leq t_M(E_{-+})$ denote the eigenvalues of $(E_{-+}^*E_{-+})^{1/2}$. 
	Then, 
	\begin{equation*}
			\frac{t_n(E_{-+})}{\| E\| t_n(E_{-+}) + \|E_-\| \|E_+\|}
			\leq  t_n(P) \leq \|R_+\| \|R_-\| t_n(E_{-+}), \quad 1\leq n \leq M.
	\end{equation*}
\end{lem}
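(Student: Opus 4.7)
My plan is to prove both inequalities by combining the Courant--Fischer min-max characterization of singular values with the block identities extracted from $\mathcal{E}\mathcal{P}=\mathcal{P}\mathcal{E}=I$. Recall that for an $N\times N$ matrix $A$ with singular values $0\leq t_1(A)\leq\dots\leq t_N(A)$,
\begin{equation*}
t_n(A) \;=\; \min_{\dim V=n}\,\max_{v\in V,\,\|v\|=1}\|Av\|
\;=\; \max_{\dim V=N-n+1}\,\min_{v\in V,\,\|v\|=1}\|Av\|.
\end{equation*}
Multiplying out $\mathcal{E}\mathcal{P}=\mathcal{P}\mathcal{E}=I$ yields the identities I shall use, namely
\begin{equation*}
EP+E_+R_+=I,\qquad E_{-+}R_+=-E_-P,\qquad PE_++R_-E_{-+}=0,\qquad R_+E_+=I_{\C^M}.
\end{equation*}

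For the upper bound, let $v_1,\dots,v_n\in\C^M$ be an orthonormal system of right singular vectors of $E_{-+}$ corresponding to $t_1(E_{-+})\leq\dots\leq t_n(E_{-+})$, and set $W=E_+(\mathrm{span}(v_1,\dots,v_n))$. Since $R_+E_+=I_{\C^M}$, the map $E_+$ is injective, so $\dim W=n$. For $w=E_+c\in W$ with $c\in\mathrm{span}(v_1,\dots,v_n)$, I have $\|w\|\geq\|R_+w\|/\|R_+\|=\|c\|/\|R_+\|$, and from $PE_+=-R_-E_{-+}$ together with the SVD one gets $\|Pw\|=\|R_-E_{-+}c\|\leq\|R_-\|\,t_n(E_{-+})\,\|c\|$. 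Taking the ratio yields $\|Pw\|/\|w\|\leq\|R_+\|\|R_-\|t_n(E_{-+})$ on $W$, and the first min-max formula closes the argument.

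For the lower bound I shall exhibit a subspace of dimension $N-n+1$ on which $P$ is bounded below. Let $v_n,v_{n+1},\dots,v_M$ denote the right singular vectors of $E_{-+}$ corresponding to $t_n(E_{-+})\leq\dots\leq t_M(E_{-+})$, and set $W=R_+^{-1}(\mathrm{span}(v_n,\dots,v_M))$. Because $R_+E_+=I_{\C^M}$ forces $R_+$ to be surjective, rank-nullity gives $\dim W=(N-M)+(M-n+1)=N-n+1$. For any $v\in W$, the condition $R_+v\in\mathrm{span}(v_n,\dots,v_M)$ together with the SVD gives $\|E_{-+}R_+v\|\geq t_n(E_{-+})\|R_+v\|$; combining this with $E_{-+}R_+=-E_-P$ yields
\begin{equation*}
\|R_+v\|\;\leq\;\frac{\|E_-\|}{t_n(E_{-+})}\,\|Pv\|.
\end{equation*}
Then $v=EPv+E_+R_+v$ (from $EP+E_+R_+=I$) gives
\begin{equation*}
\|v\|\;\leq\;\|E\|\|Pv\|+\|E_+\|\|R_+v\|\;\leq\;\frac{\|E\|\,t_n(E_{-+})+\|E_+\|\|E_-\|}{t_n(E_{-+})}\|Pv\|,
\end{equation*}
which is the required lower bound on $W$, and the second min-max formula finishes the proof.

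I do not anticipate any serious obstacle: the argument reduces to linear algebra, and the two key steps are (i) recognising that $E_+$ is injective and $R_+$ is surjective so that the dimensions of the test subspaces come out right, and (ii) using the single algebraic identity $E_{-+}R_+=-E_-P$ (plus its transpose $PE_+=-R_-E_{-+}$) to transfer information between the singular values of $E_{-+}$ and those of $P$. Were anything delicate, it would be making sure the arithmetic $N-n+1=(N-M)+(M-n+1)$ matches the min-max convention, but that is merely bookkeeping.
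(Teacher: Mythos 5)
Your argument is correct, and it is genuinely different from the paper's proof. The paper argues via the Schur complement formulas $P^{-1}=E-E_+E_{-+}^{-1}E_-$ and $E_{-+}^{-1}=-R_+P^{-1}R_-$, applies the classical singular value inequalities $s_{n+k-1}(A+B)\leq s_n(A)+s_k(B)$ and $s_{n+k-1}(AB)\leq s_n(A)s_k(B)$ (Gohberg--Krein) to bound $s_n(P^{-1})$ and hence $t_n(P)$, and then needs a separate perturbation/continuity argument ($P_{\varepsilon}=P+\varepsilon X$ with a Neumann series) to remove the assumption that $P$ is invertible. You instead work directly from the off-diagonal block identities $PE_+=-R_-E_{-+}$ and $E_{-+}R_+=-E_-P$ together with $R_+E_+=I_{\C^M}$ and $EP+E_+R_+=I$, and feed the subspaces $E_+(\mathrm{span}(v_1,\dots,v_n))$ and $R_+^{-1}(\mathrm{span}(v_n,\dots,v_M))$ into the Courant--Fischer min-max characterization; the dimension counts ($E_+$ injective, $R_+$ surjective) are right, and the two displayed estimates transfer the singular values as claimed. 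What your route buys is that no invertibility of $P$ or $E_{-+}$ is ever used, so the limiting argument of the paper is unnecessary, and the only classical input is min-max rather than the Ky Fan-type product and sum inequalities. The single cosmetic point to add is the degenerate case $t_n(E_{-+})=0$, where your displayed inequality $\|R_+v\|\leq \|E_-\|\,t_n(E_{-+})^{-1}\|Pv\|$ involves a division by zero; there the asserted lower bound is $0\leq t_n(P)$ and holds trivially (note $\|E_+\|,\|E_-\|>0$ since $R_+E_+=I_{\C^M}$ and $E_-R_-=I_{\C^M}$), so a one-line remark disposes of it.
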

\begin{rem}\label{rem:NotSV}
	Before we present the proof of Lemma \ref{gpp:lem1}, let us 
	comment on some notation. Let $A$ be a trace-class operator 
	and Fredholm of index $0$ acting on a complex separable Hilbert 
	space $\mathcal{H}$. We shall denote by $0\leq t_1(A) \leq t_2(A) \leq \dots$ the 
	increasing sequence of eigenvalues of $(A^*A)^{1/2}$ and 
	by $s_1(A) \geq s_2(A) \geq \dots $ the decreasing sequence 
	of eigenvalues of $(A^*A)^{1/2}$. The latter are called the 
	singular values of $A$. We have that$s_n(A) = s_n(A^*)$  and 
	 $t_n(A) = t_n(A^*)$. When $\dim \mathcal{H} = D < \infty$, 
	then these sequences are finite, and we have that $s_{D-n+1} = t_n$. 
\end{rem}

\begin{proof}[Proof of Lemma \ref{gpp:lem1}]
	We know from the Schur complement formula applied to $\mathcal{P}$
	and $\mathcal{E}$, that $P$ is invertible if and only if 
	$E_{-+}$ is invertible, and that in this case 
	 \begin{equation}\label{gpp12}
		P^{-1} = E - E_+ E_{-+}^{-1} E_-, \quad 
		E_{-+}^{-1} = - R_+ P^{-1} R_-. 
	\end{equation}
	Let $0 \leq s_N(P) \leq \dots \leq s_1(P)$ denote the singular values of $P$, and 
	let $0 \leq s_M(E_{-+}) \leq \dots \leq s_1(E_{-+})$ denote the singular values of $E_{-+}$. 
	Notice that $s_n(P) = t_{N-n+1}(P)$ and similarly, $s_n(E_{-+}) = t_{M-n+1}(E_{-+})$. 
	Suppose first that $P$ is invertible. Then, since $s_n(P)  = s_n(P^*) $, we have that 
	\begin{equation}\label{gpp13}
			s_n(P^{-1}) = \frac{1}{t_n(P)}, \quad 1 \leq n \leq N.
	\end{equation}
	and similarly, 
	\begin{equation}\label{gpp14}
			s_n(E_{-+}^{-1}) = \frac{1}{t_n(E_{-+})}, \quad 1 \leq n \leq M. 
	\end{equation}
	We recall from \cite{GoKr69} that if $A,B$ are trace-class operators, then we have the following general 
	estimates 
	\begin{equation}\label{gpp15}
	\begin{split}
			&s_{n+k-1}(A+B) \leq s_n(A) + s_k(B) ,\\ 
			&s_{n+k-1}(AB) \leq s_n(A)s_k(B).
	\end{split}
	\end{equation}
	Using that $s_1(A) = \| A\|$, it follows from the first equation in \eqref{gpp12} 
	in combination with \eqref{gpp15} that 
	\begin{equation}\label{gpp16}
		s_n(P^{-1}) \leq \| E\| + \| E_+ \| \|E_-\| s_n(E_{-+}^{-1}), 
		\quad 1 \leq n \leq M.
	\end{equation}
	By \eqref{gpp13}, \eqref{gpp14} we then get 
	\begin{equation}\label{gpp17}
		t_n(P) \geq  \frac{t_n(E_{-+})}{\| E\| t_n(E_{-+})+ \| E_+ \| \|E_-\| }, 
		\quad 1 \leq n \leq M.
	\end{equation}
	Similarly, we get from the second equation in \eqref{gpp12} that 
	\begin{equation}\label{gpp18}
		t_n(P) \leq   \| R_+ \| \|R_-\| t_n(E_{-+}), 
		\quad 1 \leq n \leq M.
	\end{equation}
	When replacing $P$ in $\mathcal{P}$ by $P_{\varepsilon} = P+ \varepsilon X$, $\|X\| \leq 1$, 
	$0\leq \varepsilon \ll 1$, a small perturbation of $P$, we see by a Neumann series argument 
	that the perturbed Grushin problem $\mathcal{P}^{\varepsilon}$ remains invertible with inverse 
	$\mathcal{E}^{\varepsilon}$. Since the  singular values of $P_{\varepsilon}$ and $E^{\varepsilon}_{-+}$ 
	depend continuously on $\varepsilon$, we see that \eqref{gpp17} and \eqref{gpp18} hold 
	even when $P$ is not invertible. 
\end{proof}

\section{Perturbation by a random matrix}\label{sec:PRM}
In this section we consider perturbations of $p_N$ by two 
random matrix ensembles $Q_{\omega}$. We will begin with 
the case when $Q_{\omega}$ is of the complex Ginibre ensemble, 
i.e. a complex Gaussian random matrix whose entries are independent and 
identically distributed (iid). Then, we will 
consider a more general ensemble of random matrices 
with iid entries with mean $0$, variance $1$ and bounded fourth moment. 
The aim of this section is to estimate the probability 
that $|\det E_{-+}^{\delta}|$ is small. 
\\ 
\par 
In this section we let $K\Subset \C$ be an open connected 
relatively compact set. Let $z\in K$, let $V_z(t)$ be as in 
\eqref{i2} and suppose that 
\begin{equation}\label{gc0}
		\exists \kappa \in ]0,1], \text{ such that } V_z(t) = \mO(t^{\kappa}), 
		\text{ uniformly for } z \in K, ~ 0 \leq t \ll 1.
\end{equation}
\par
Our principal aim is to find probabilistic lower bounds on $\log |\det E_{-+}^{\delta}(z)|$. 
\subsection{The Gaussian case}
We continue to identify $\mathcal{H}_h^d\simeq \C^{N^d}$, as in Section \ref{sec:GP}. 
Let $Q_{\omega}$ be a complex Gaussian random matrix with independent 
and identically distributed (iid) entries, i.e. 
\begin{equation}\label{gc1}
	Q_{\omega} = (q_{i,j}(\omega))_{1\leq i,j\leq N^d}, \quad 
	q_{i,j}(\omega) \sim \mathcal{N}_{\C}(0,1) ~(iid).
\end{equation}
In other words, let $\mathcal{H}_N \defeq (\C^{N^d\times N^d}, \| \cdot \|_{\mathrm{HS}})$ 
of $N^d\times N^d$ be the space of complex valued matrices equipped with the Hilbert-Schmidt norm, 
which we equip with the Gaussian probability measure 
\begin{equation}\label{gc2}
	\mu_N (dQ) \defeq \pi^{-N^2} \e^{- \| Q\|_{\mathrm{HS}}^2} L(dQ), 
\end{equation} 
where $L(dQ)$ denotes the Lebesgue measure on $\mathcal{H}_N$. 
For $C_1>0$, let $\mathcal{Q}_{C_1N}\subset \mathcal{H}_N$ be the 
subset where 
\begin{equation}\label{gc3}
	\| Q_{\omega}\| \leq C_1 N^{\frac{d}{2}}.
\end{equation} 
Since $Q_{\omega}$ is Gaussian, we know (see e.g. \cite{Ta12,TaVu10}) that 
for $C_1>0$ is sufficiently large, 
\begin{equation}\label{gc4}
\prob\left(
\Vert Q_{\omega}\Vert\le C_1N^{d/2}
\right)
= \mu_N(\mathcal{Q}_{C_1N}) \ge 1-e^{-N^d}.
\end{equation}
We restrict our attention to $\|Q_{\omega}\|\leq C_1 N^{d/2}$ 
and we assume that 
\begin{equation}\label{r4}
0< \delta \ll N^{-d/2}\alpha^{1/2}.
\end{equation}
Then \eqref{gpp4} is satisfied and it follows from the discussion in Section \ref{sec:GPP} that 
the Grushin problem $\mathcal{P}^{\delta}$ \eqref{gpp3} 
is bijective with inverse $\mathcal{E}^{\delta}$ \eqref{gpp7}, and the estimates of Section \ref{sec:GPP} 
apply. In particular, we have by \eqref{gpp10}, \eqref{gpp11} in combination with \eqref{gc4}, that 
 \begin{equation}\label{r5}
	\log |\det \mathcal{P}^{\delta}(z)| 
	= N^d \left( \int_{\T^{2d}}  \log |p_0(\rho)-z|  \, d\rho + 
		 	 \mO\!\left(\alpha^{\kappa}\log \frac{1}{\alpha}\right)+\mO(\delta  N^{\frac{d}{2}}\alpha^{-\frac{1}{2}})\right),
\end{equation}
and that 
 \begin{equation}\label{r6}
	\log |\det E_{-+}^{\delta}(z)| \leq \mO( N^d\alpha^{\kappa}|\log(\alpha )|),
\end{equation}
with probability $\geq 1 - \e^{-N^d}$. Thus, with the same probability, we have 
in view \eqref{gpp8.2a} that 
 \begin{equation}\label{r6.1}
	\log |\det (P^{\delta}-z)| \leq N^d \left( \int_{\T^{2d}}  \log |p_0(\rho)-z|  \, d\rho + 
		 	 \mO\!\left(\alpha^{\kappa}\log \frac{1}{\alpha}\right)+\mO(\delta N^{\frac{d}{2}}\alpha^{-\frac{1}{2}})\right).
\end{equation}
From Theorem \ref{thm:SSV} below (a complex version of \cite[Lemma 3.2]{SaTeSp06}), 
we know that there exists a constant $C>0$ such that every $N\geq 2$, and all $t> 0$
	\begin{equation}\label{r7}
	\textbf{P}\left( s_{N^d}( P + \delta Q_{\omega} -z) \leq \delta t\right) 
	\leq  C N^{d} t^2.
	\end{equation}	
Notice that here the constant $C>0$ is uniform in $z\in K$. 
If $\|Q_{\omega}\|\leq C_1 N^{d/2}$ and \eqref{gc4} holds, 
then the Grushin Problem \eqref{gpp3} is bijective, and we then know from 
Lemma \ref{gpp:lem1} that 
\begin{equation}\label{r7.00}
	s_{N^d}(P-z+\delta Q_{\omega})= t_{1}(P - z+ \delta Q_{\omega}) \leq t_1(E_{-+}^{\delta}(z)) 
	=s_{M}(E_{-+}^{\delta}(z)) 
\end{equation}	
Hence, by combining \eqref{r7} and \eqref{gc4}, we get that 
	\begin{equation}\label{r7.0}
	\textbf{P}\left( s_{M}( E_{-+}^{\delta}(z)) > \delta t 
	 \text{ and } \Vert Q_{\omega}\Vert \leq C_1 N^{d/2} \right) 
	\geq 1 - C N^{d} t^2- \e^{-N^d}.
	\end{equation}	
We are interested in the regime when $0 < \delta, t \ll 1$. Therefore, 
supposing that the event \eqref{r7.0} holds, we have 
\begin{equation}\label{r7.2}
\begin{split}
\log |\det E_{-+}^{\delta} | & = \sum_1^M \log s_j(E_{-+}^{\delta}) \\ 
& \geq M \log s_M(E_{-+}^{\delta})  \\
& \geq - M \log (\delta t ) ^{-1} \\
&\geq -C N^d \alpha^{\kappa} \log (t \delta)^{-1},
\end{split}
\end{equation}
here we used as well \eqref{gp4.0}. 
Summing everything up so far, we have proven
\begin{prop}\label{prop:gc3}
Let $N\geq 2$, let $K\Subset \C$ be an open connected relatively compact set, let $z\in K$, 
let $N^{-1} \ll \alpha \ll 1$ and suppose that \eqref{gc0} and \eqref{r4} hold. Then, the 
following holds uniformly for $z\in K$ : There exists a positive constant $C>0$ such that for any 
$0 < t \ll 1$ 
\begin{equation*}
\textbf{P}\left( \log |\det E_{-+}^{\delta}(z) |  \geq  
	-C N^d \alpha^{\kappa}  \log (t \delta )^{-1}
	\text{ and } \Vert Q_{\omega}\Vert \leq C_1 N^{d/2} \right) 
	\geq  1 - C N^{d} t^2- \e^{-N^d}.
\end{equation*}
\end{prop}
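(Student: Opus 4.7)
The plan is to combine the smallest-singular-value tail bound of Theorem \ref{thm:SSV} (recorded as \eqref{r7}) with the singular-value comparison between the perturbed operator $P-z+\delta Q_\omega$ and the auxiliary matrix $E_{-+}^{\delta}(z)$ supplied by Lemma \ref{gpp:lem1}, and then convert a lower tail bound on $s_M(E_{-+}^{\delta})$ into the advertised lower bound on $\log|\det E_{-+}^{\delta}|$ using monotonicity of singular values.

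First I would observe that under the hypothesis \eqref{r4}, on the event $\{\|Q_\omega\|\leq C_1 N^{d/2}\}$, which has probability at least $1-e^{-N^d}$ by \eqref{gc4}, the smallness condition \eqref{gpp4} holds. Hence the Grushin problem $\mathcal{P}^{\delta}$ of \eqref{gpp3} is bijective with inverse $\mathcal{E}^{\delta}$ of \eqref{gpp7}, and all the bounds of Section \ref{sec:GPP} are available. Since the $R_\pm$ constructed in \eqref{gp4}, \eqref{gp5} have operator norm one, Lemma \ref{gpp:lem1} gives $t_1(P-z+\delta Q_\omega)\leq t_1(E_{-+}^{\delta}(z))$, that is $s_{N^d}(P-z+\delta Q_\omega)\leq s_M(E_{-+}^{\delta}(z))$.

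Next I would invoke Theorem \ref{thm:SSV} to get $\textbf{P}(s_{N^d}(P-z+\delta Q_\omega)\leq \delta t)\leq C N^d t^2$, with a constant uniform in $z\in K$. The inclusion of events from the previous step together with the large-deviation bound \eqref{gc4} then yields, via a union bound,
\[
\textbf{P}\bigl( s_M(E_{-+}^{\delta}(z)) > \delta t \text{ and } \|Q_\omega\|\leq C_1 N^{d/2}\bigr) \;\geq\; 1 - CN^d t^2 - e^{-N^d}.
\]
On this event the ordering $s_1(E_{-+}^{\delta})\geq \cdots \geq s_M(E_{-+}^{\delta}) > \delta t$ gives $\log s_j(E_{-+}^{\delta}(z)) \geq \log(\delta t)$ for every $1\leq j\leq M$, so, using $0<\delta t\ll 1$,
\[
\log |\det E_{-+}^{\delta}(z)| \;=\; \sum_{j=1}^M \log s_j(E_{-+}^{\delta}(z)) \;\geq\; M\log(\delta t) \;=\; -M\log(\delta t)^{-1}.
\]

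To conclude I would insert the uniform estimate $M = \mO(N^d\alpha^{\kappa})$ from \eqref{gp4.0}, which holds uniformly in $z\in K$ thanks to hypothesis \eqref{gc0} (as noted in the remark just after \eqref{gp4.0}); this turns the preceding inequality into the claimed bound. The genuine analytic input of the proof is really Theorem \ref{thm:SSV}, whose proof (a complex adaptation of the Sankar-Spielman-Teng smallest-singular-value estimate \cite[Lemma 3.2]{SaTeSp06}) is deferred to the appendix; everything else is bookkeeping within the Grushin-problem framework of Section \ref{sec:GPP}, and the main care one must take is to keep all constants uniform in $z\in K$.
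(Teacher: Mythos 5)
Your proposal is correct and follows essentially the same route as the paper: restrict to the event $\{\Vert Q_{\omega}\Vert\leq C_1N^{d/2}\}$ via \eqref{gc4} so the Grushin problem is invertible, compare $s_{N^d}(P-z+\delta Q_{\omega})$ with $s_M(E_{-+}^{\delta}(z))$ through Lemma \ref{gpp:lem1} (with $\|R_\pm\|=1$), apply Theorem \ref{thm:SSV} with a union bound, and then bound $\log|\det E_{-+}^{\delta}|\geq M\log(\delta t)$ using $M=\mO(N^d\alpha^{\kappa})$ from \eqref{gp4.0}, uniformly in $z\in K$. This is precisely the chain \eqref{r7}--\eqref{r7.2} in the paper, so no further comment is needed.
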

We recall that $M=\mO(N^{d}\alpha^{-\kappa})$. 
Combining Proposition \ref{prop:gc3} with \eqref{gpp8.2a} and \eqref{r5}, we obtain 
that 
 \begin{equation}\label{gc12.1}
 \begin{split}
	\log |\det &(P^{\delta}-z)| \\
	&\geq N^d \left( \int_{\T^{2d}}  \log |p_0(\rho)-z|  \, d\rho + 
		 	 \mO\!\left(\alpha^{\kappa}\log \frac{1}{\alpha}\right)+\mO(\delta N^{\frac{d}{2}}\alpha^{-1/2}) 
			 -C\alpha^{\kappa}  \log (t \delta )^{-1}\right).
\end{split}
\end{equation}
with probability 
 \begin{equation}\label{gc12.2}
\geq 1 - C N^{d} t^2- \e^{-N^d}.
\end{equation}
when
\begin{equation}\label{gc12.3}
	0 < t \ll1, \quad  0<\delta \ll N^{-d/2}\alpha^{1/2}.
\end{equation}
\subsection{The universal case}
We continue to identify $\mathcal{H}_h^d\simeq \C^{N^d}$, as in Section \ref{sec:GP}. 
Now, we consider the random matrix 
\begin{equation}\label{gc2.1}
	Q_{\omega} = (q_{i,j}(\omega))_{1\leq i,j\leq N^d}
\end{equation}
whose entries $q_{i,j}(\omega)$ are independent copies of a random variable 
$q$ satisfying the moment conditions 
\begin{equation}\label{gc2.1.0.1}
	\erw[ q ] =0, \quad  \erw[|q|^2] =1, \quad \erw[ |q|^4] < +\infty.
\end{equation}
We are interested in the eigenvalues of 
\begin{equation}\label{gc2.1.0}
	P^{\delta} = P + \delta Q_{\omega}, \quad 0 < \delta \ll1.
\end{equation}
In this section we assume  that for some sufficiently large constant $C>0$  
\begin{equation}\label{gc2.1.1}
	\delta = \frac{1}{C}N^{ -d/2 - \delta_0 }, \quad \text{ with some fixed } \delta_0 >0.
\end{equation}
Furthermore, we set for some arbitrary but fixed $ \tau \in ]0,1[$
\begin{equation}\label{gc2.1.2}
	\alpha = N^{- \min( \delta_0, 1) \tau }.
\end{equation}
Form \cite{La05} we know that \eqref{gc2.1.0.1} implies that $\erw [ \| Q_{\omega}\| ] \leq C N^{d/2}$, which   
using Markov's inequality, yields that 
\begin{equation}\label{gc2.2}
\prob\left[ 
\Vert Q_{\omega}\Vert \geq C N^{d/2+( 1-\tau) \delta_0}
\right] 
\leq C^{-1}N^{-d/2- ( 1-\tau) \delta_0}\erw [ \| Q_{\omega}\| ] 
\leq N^{-( 1-\tau) \delta_0}.
\end{equation}
Suppose that $\|Q_{\omega}\| \leq CN^{d/2+(1-\tau)\delta_0}$. 
Then, 
\begin{equation}\label{gc2.2.1}
	\delta \alpha^{-1/2} \|Q\| \ll N^{-\tau \delta_0 /2}.
\end{equation}
Then \eqref{gpp4} is satisfied and it follows from the discussion in Section \ref{sec:GPP} that 
the Grushin problem $\mathcal{P}^{\delta}$ \eqref{gpp3} 
is bijective with inverse $\mathcal{E}^{\delta}$ \eqref{gpp7}, and the estimates of Section \ref{sec:GPP} 
apply. In particular, we have by \eqref{gpp10}, \eqref{gpp11} in combination with \eqref{gc4}, that 
 \begin{equation}\label{gc2.3}
	\log |\det \mathcal{P}^{\delta}(z)| 
	= N^d \left( \int_{\T^{2d}}  \log |p_0(\rho)-z|  \, d\rho + 
		 	 \mO(N^{-\min(\delta_0,1)\tau \kappa}\log N)+\mO(N^{-\tau \delta_0 /2})\right),
\end{equation}
and
 \begin{equation}\label{gc2.4}
	\log |\det E_{-+}^{\delta}(z)| \leq \mO( N^{d-\tau\kappa\min(\delta_0,1)}\log N),
\end{equation}
with probability $\geq 1 -N^{-( 1-\tau) \delta_0}$. Here we also used that 
$\| Q_{\omega}\|_{\mathrm{HS}} \leq N^{d/2}\| Q_{\omega}\|$. Using \eqref{gpp8.2a}, we have that
 with the same probability,  
 \begin{equation}\label{gc2.4.1}
	\log |\det (P^{\delta}-z)| \leq N^d \left( \int_{\T^{2d}}  \log |p_0(\rho)-z|  \, d\rho + 
		 	 \mO(N^{-\min(\delta_0,1)\tau \kappa}\log N)+\mO(N^{-\tau \delta_0 /2})\right).
\end{equation}
\begin{prop}\label{prop:gc2}
Let $K\Subset \C$ be an open connected relatively compact set, let $z\in K$, 
suppose that \eqref{gc0}, \eqref{gc2.1.1} and \eqref{gc2.1.2} hold. Then, the 
following holds uniformly for $z\in K$: 
There exist a positive constants $\beta,C>0$ such that for all $\tau \in ]0,1[$ 
\begin{equation*}
\textbf{P}\left( \log |\det E_{-+}^{\delta} (z)|  \geq  
	-\beta N^{d- \kappa\tau \min( \delta_0, 1) }\log N
	\text{ and } \Vert Q_{\omega}\Vert \leq C N^{d/2+( 1-\tau) \delta_0} \right) 
	\geq  1 - N^{-( 1-\tau) \delta_0}.
\end{equation*}
\end{prop}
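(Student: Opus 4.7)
The plan is to mirror the Gaussian argument (Proposition \ref{prop:gc3}) after replacing the Gaussian norm concentration by the polynomial Markov estimate available under the moment hypothesis \eqref{gc2.1.0.1}. First, I would introduce the norm event $\mathcal{A} := \{\|Q_\omega\| \leq C N^{d/2 + (1-\tau)\delta_0}\}$, whose probability is at least $1 - N^{-(1-\tau)\delta_0}$ by \eqref{gc2.2}. On $\mathcal{A}$, the choices \eqref{gc2.1.1}, \eqref{gc2.1.2} together with \eqref{gc2.2.1} show that the smallness hypothesis \eqref{gpp4} is satisfied, so the Grushin problem $\mathcal{P}^\delta(z)$ of Section \ref{sec:GPP} is bijective and Lemma \ref{gpp:lem1} (with $\|R_\pm\|=1$) yields $s_M(E_{-+}^\delta(z)) \geq s_{N^d}(P^\delta-z)$.

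Second, I would invoke Theorem \ref{thm:SSV} from the appendix (the complex version of the Sankar--Spielman--Teng bound, extended in the spirit of \cite{TaVu10} to iid matrices with bounded fourth moment) to obtain, uniformly in $z \in K$ and for all $t>0$, the small-ball estimate $\prob(s_{N^d}(P^\delta - z) \leq \delta t) \leq C N^d t^2$. I would then take $t = c N^{-d/2 - (1-\tau)\delta_0/2}$ with $c>0$ small enough that $CN^d t^2 \leq \tfrac{1}{2} N^{-(1-\tau)\delta_0}$ (after halving the constant in \eqref{gc2.2} as well). A union bound then yields, with probability at least $1 - N^{-(1-\tau)\delta_0}$, both $\mathcal{A}$ and $s_M(E_{-+}^\delta(z)) \geq \delta t$, and in particular $|\log s_M(E_{-+}^\delta(z))| = \mO(\log N)$.

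Third, the elementary bound
\[
\log |\det E_{-+}^\delta(z)| = \sum_{j=1}^M \log s_j(E_{-+}^\delta(z)) \geq M\, \log s_M(E_{-+}^\delta(z)),
\]
combined with the eigenvalue count $M = \mO(N^d \alpha^\kappa) = \mO(N^{d - \kappa\tau \min(\delta_0,1)})$ coming from \eqref{gp4.0} and \eqref{gc2.1.2}, furnishes the claimed lower bound $-\beta N^{d - \kappa\tau \min(\delta_0,1)}\log N$ for a suitable $\beta>0$.

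The main obstacle is the input from the appendix: the original Sankar--Spielman--Teng argument relies on the rotational invariance of the Gaussian measure, and establishing the same $t^2$ small-ball estimate uniformly in the complex shift $z$ under only a finite fourth moment on the entries of $Q_\omega$ is the genuinely delicate step. Once that ingredient is granted, the remainder of the argument is a direct adaptation of the Gaussian proof, the only new feature being that the exponential tail $\e^{-N^d}$ on $\|Q_\omega\|$ is replaced by the polynomial Markov bound $N^{-(1-\tau)\delta_0}$, which in turn dictates the near-optimal choice of $t$ above.
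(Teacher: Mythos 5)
Your overall architecture is correct (control the norm event via Markov's inequality, invert the Grushin problem on that event, apply Lemma \ref{gpp:lem1} with $\|R_\pm\|=1$, and then bound $\log|\det E_{-+}^\delta|\ge M\log s_M(E_{-+}^\delta)$ using $M=\mO(N^{d-\kappa\tau\min(\delta_0,1)})$), but the key probabilistic input is wrong. Theorem \ref{thm:SSV} in the appendix is a statement about complex \emph{Gaussian} perturbations only; its proof relies essentially on the rotational invariance of the Gaussian measure (fixing rows $b_2,\dots,b_N$ and using that $t\cdot b_1$ is then a complex Gaussian), and there is no ``extension in the spirit of \cite{TaVu10}'' that produces the quadratic small-ball bound $\prob(s_{N^d}(P^\delta - z)\le \delta t)\le CN^d t^2$ for all $t>0$ under only the moment assumptions \eqref{gc2.1.0.1}. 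That $t^2$-scaling is a genuinely Gaussian phenomenon, coming from the two-dimensional Gaussian small-ball estimate; you cannot simply assume it.

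The paper invokes a different tool: \cite[Theorem 3.2]{TaVu10}, which is stated precisely for iid perturbations of a deterministic matrix under finite moment hypotheses, and gives a \emph{fixed polynomial threshold} bound rather than a small-ball estimate valid at all scales. Concretely, one applies that theorem to $\delta^{-1}(P-z) + Q_\omega$ with $\gamma = 1/2+\delta_0/d$ and $A = 1/2+\delta_0$ (after verifying $\|\delta^{-1}(P-z)\|\le (N^d)^\gamma$ uniformly for $z\in K$ via Proposition \ref{prop:qf2} and \eqref{gc2.1.1}, a hypothesis your sketch omits), and this yields $s_{N^d}(P^\delta-z)\ge C^{-1}N^{-(d/2+\delta_0)(2\delta_0+3)-\delta_0}$ with failure probability $\mO(N^{-\delta_0})$. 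Since all that the subsequent logarithmic estimate requires is $|\log s_M(E_{-+}^\delta)|=\mO(\log N)$, this polynomial-threshold bound is enough; the precise exponent is irrelevant. Once you replace your hypothetical $t^2$ estimate by the actual Tao--Vu result, the rest of your argument (the Grushin reduction, the choice of $\alpha$ and $M$, and the union bound with the norm event) goes through. As written, though, the proof has a genuine gap at the crucial step, which you yourself flagged but did not close.
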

\begin{proof}
	By \cite[Theorem 3.2]{TaVu10} we have that for any $\gamma \geq 1/2$ and $A\geq 0$ 
	there exists a constant $C>0$ such that for any deterministic $n\times n$ matrix $M$ 
	with $\| M_n\| \leq n^{\gamma}$, 
	\begin{equation}\label{gc2.4.2}
	\textbf{P}\left( s_n(M_n + N_n) \leq n^{\gamma (2A+2) +1/2} \right) 
	\leq  C \left( n^{-A + o(1)} + \prob( \| N_n\| \geq n^{\gamma})\right).
	\end{equation}
	where $N_n$ is a random matrix of size $n$ whose entries are iid copies of $q$ \eqref{gc2.1.0.1}, 
	and $s_n(M_n + N_n)$ denotes $n$-th (i.e. the smallest) singular value of $M_n + N_n$.
	\\
	\par
	We apply this result to $\delta^{-1}(P_{\delta}-z )= \delta^{-1}( P-z) + Q_{\omega}$, cf. \eqref{gc2.1.0}. 
	By Proposition \ref{prop:qf2} and \eqref{gc2.1.1} we have that for all $z\in K$
	$$\| \delta^{-1} (P-z )\| \leq  N^{d/2 + \delta_0} = (N^d)^{\gamma}, \quad \gamma = 1/2+\delta_0/d.$$
	Similarly to \eqref{gc2.2}, we have that 
	\begin{equation}\label{gc2.5}
	\prob\left(
	\Vert Q_{\omega}\Vert \geq N^{d\gamma}
	\right) 
	\leq C N^{-\delta_0}.
	\end{equation}
	Thus, applying \eqref{gc2.4.2} with $n=N^d$, $M_n = \delta^{-1}( P-z)$, $N_n = Q_{\omega}$, 
	$A=1/2+\delta_0$, we conclude that there exists a constant $C>0$, independent of $z\in K$, 
	such that for $N\geq 1$ 
	\begin{equation}\label{gc2.6}
	\textbf{P}\left( s_{N^d}(\delta^{-1} (P-z) + Q_{\omega}) \leq (N^d)^{-\gamma (2A+2) +1/2} \right) 
	\leq  C N^{-\delta_0}.
	\end{equation}
	Since $s_{N^d}((P-z) + \delta Q_{\omega})  = \delta s_{N^d}(\delta^{-1} (P-z) + Q_{\omega})$, we get  
	that  
	\begin{equation}\label{gc2.7}
	\textbf{P}\left( s_{N^d}(P-z + \delta Q_{\omega}) \leq C^{-1} N^{-(d/2 + \delta_0) (2\delta_0 +3) -\delta_0} \right) 
	\leq  C N^{-\delta_0},
	\end{equation}
	after potentially increasing the constant $C>0$ in \eqref{gc2.6}, uniformly in $z\in K$.
	\\
	\par
	Let $0 \leq t_1(P_{\delta}-z) \leq \dots \leq t_{N^d}(P_{\delta}-z)$ denote the eigenvalues of 
	$((P_{\delta}-z)^*(P_{\delta}-z))^{1/2}$, $P_{\delta} =P + \delta Q_{\omega}$, and recall 
	from \eqref{gp4}, \eqref{gp5}, that $\|R_{\pm}\| =1$. Suppose 
	$\Vert Q_{\omega}\Vert \leq C N^{d/2+( 1-\tau) \delta_0}$, with $\tau$ as in \eqref{gc2.2}. 
	Recall the estimates given in \eqref{gpp8}, 
	\eqref{gp10}, which together with \eqref{gc2.2}, \eqref{gc2.1.1}, \eqref{gc2.1.2} yield that 
	\begin{equation}\label{gc2.7.1}
		 \| E^{\delta}\| \|E^{\delta}_{-+}\| \leq 2 \alpha^{-1/2} ( \alpha^{1/2}  + N^{-\tau\delta_0}) 
		 \leq  2(1+ N^{-\tau \delta_0/2}).
	\end{equation}
	Lemma \ref{gpp:lem1} then shows that
	\begin{equation}\label{gc2.8}
	\frac{ t_1(E_{-+}^{\delta}(z))}{C} \leq   t_1(P_{ \delta}-z) \leq  t_1(E_{-+}^{\delta}(z)),
	\end{equation}
	 for all $z\in K$. Since $s_{N^d}(P_{ \delta}-z) = t_1(P_{ \delta}-z)$ 
	and $s_{M}(E_{-+}^{\delta}(z)) = t_1(E_{-+}^{\delta}(z))$, we deduce from  \eqref{gc2.7}, \eqref{gc2.2} 
	and \eqref{gc2.8} that for $N\geq 1 $ 
	\begin{equation}\label{gc2.9}
	\begin{split}
	\textbf{P}\big( s_{M}(E_{-+}^{\delta}(z)) > C^{-1} N^{-(d/2 + \delta_0) (2\delta_0 +3) -\delta_0} 
	\text{ and } \Vert Q_{\omega}\Vert& \leq C N^{d/2+( 1-\tau) \delta_0} \big) \\
	&\geq  1 - C N^{-\delta_0}- N^{-( 1-\tau) \delta_0}.
	\end{split}
	\end{equation}
	Notice that the constants here are uniform in $z\in K$. 
	Supposing that the above event holds, we get by \eqref{gp4.0}, \eqref{gc2.1.2}, that 
	\begin{equation}\label{gc2.10}
	\begin{split}
		\log |\det E_{-+}^{\delta} (z)| &= \sum_1^M \log s_j(E_{-+}^{\delta}(z) )\\ 
			&\geq M \log s_M(E_{-+}^{\delta} (z)) \\ 
			& \geq -\beta N^{d- \kappa 
			\tau \min( \delta_0, 1) }\log N
	\end{split}
	\end{equation}
	for some $\beta >0$ uniform in $z\in K$, and the claim of the proposition follows. 
\end{proof}
By \eqref{gpp8.2a}, it follows from Proposition \ref{prop:gc2} and \eqref{gc2.3} that there 
exists a constant $C>0$ such that for all $z\in K$ 
 \begin{equation}\label{gc2.11}
	\log |\det (P^{\delta}-z)| \geq N^d \left( \int_{\T^{2d}}  \log |p_0(\rho)-z|  \, d\rho 
		 	-CN^{-\min(\delta_0,1)\tau \kappa}\log N -CN^{-\tau \delta_0 /2}
			\right).
\end{equation}
with probability $\geq  1 - N^{-( 1-\tau) \delta_0}$.
\section{Counting eigenvalues}
\subsection{Counting zeros of holomorphic functions of exponential growth}
\label{sec:c1}
We recall Theorem $1.1$ in \cite{Sj09b} (in a form somewhat adapted to our formalism) which 
gives an estimate on the number of zeros of holomorphic functions with exponential growth 
in certain domains with Lipschitz boundary. Different versions of this theorem have been 
been proven also in \cite{Ha06,HaSj08}. This presentation has been taken from \cite{SjVo19a}.
\\
\par
\subsubsection{Domains with associated Lipschitz weight}\label{sec:LipBD}
Let $N\geq1$ be a large parameter, and let $\Omega \Subset \C$ be an open 
simply connected set with Lipschitz boundary $\omega =\partial\Omega$ which may 
depend on $N$. More precisely, we assume that $\partial\Omega$ is Lipschitz 
with an associated Lipschitz weight $r:\omega \to ]0,+\infty[$, which is 
a Lipschitz function of modulus $\leq 1/2$, in the following way : 
\par
There exists a constant $C_0>0$ such that for every $x\in\omega$ 
there exist new affine coordinates $\widetilde{y} = (\widetilde{y}_1,\widetilde{y}_2)$ 
of the form $\widetilde{y} = U(y-x)$, $y\in \C \simeq \R^2$ being the old coordinates, 
where $U=U_x$ is orthogonal, such that the intersection of $\Omega$ and the 
rectangle $R_x := \{ y\in\C; |\widetilde{y}_1| < r(x), |\widetilde{y}_2| < C_0r(x)\}$ 
takes the form 
\begin{equation}\label{eq.LD1}
	\{ y \in R_x; ~\widetilde{y}_2 > f_x(\widetilde{y}_1), |\widetilde{y}_1| < r(x)\},
\end{equation}
where $f_x(\widetilde{y}_1)$ is Lipschitz on $[-r(x),r(x)]$, with Lipschitz modulus 
$\leq C_0$. Notice that \eqref{eq.LD1} remains valid if we shrink the weight function $r$.
\subsubsection{Thickening of the boundary and choice of points}
Define 
\begin{equation}\label{eq.LD1.2}
	\widetilde{\omega}_r = \bigcup_{x\in \omega}D(x,r(x))
\end{equation}
and let $z_j^0 \in \omega$, $j\in \Z/ N_0\Z$, with $N_0\in \N$ which may depend on $N$, 
be distributed along the boundary in the positively oriented sense such that 
\begin{equation}\label{eq.LD1.1}
	r(z_j^0)/4 \leq |z_{j+1}^0 - z_j^0| \leq r(z_j^0)/2.
\end{equation}
\begin{thm}[Theorem 1.1 in \cite{Sj09b}]\label{thm:Count}
	Let $C_0>0$ be as in 1) above. 
	There exists a constant $C_1>0$, depending only on $C_0$, 
	such that if $z_j \in D(z_j^0, r(z_j^0)/(2C_1))$ we have the 
	following :
	\par
	Let $N\geq 1$ and let $\phi $ be a continuous subharmonic function on 
	$\widetilde{\omega}_r$ with a distributional extension to 
	$\Omega\cup \widetilde{\omega}_r$, denoted by the same symbol. Then, 
	there exists a constant $C_2>0$ such that if $u$ is a holomorphic function 
	on $\Omega\cup \widetilde{\omega}_r$ satisfying 
	\begin{equation}\label{th:UB}
		\log |u | \ \leq N^d \phi \hbox{ on } \widetilde{\omega}_r,
	\end{equation}
	\begin{equation}\label{th:LB}
		\log |u (z_j)| \ \geq N^d (\phi(z_j) - \varepsilon_j), \hbox{ for } 
		j=1, \dots , N_0,
	\end{equation}
	where $\varepsilon_j \geq 0$, then the number of zeros of $u$ in $\Omega$
	satisfies
	\begin{equation}\label{th:c}
	\begin{split}
		\bigg|\#(u^{-1}(0)\cap \Omega) &- \frac{N^d}{2\pi} \mu(\Omega)\bigg| \\
		&\leq C_2 N^d \left(
		\mu(\widetilde{\omega}_r) + \sum_{j=1}^{N_0}
		\left( \varepsilon_j 
		+ \int_{D\!\left(z_j,\frac{r(z_j)}{4C_1}\right)}\left| \log \frac{|w-z_j|}{r(z_j)} \right |
		\mu(dw)
		\right)
		\right).
	\end{split}
	\end{equation}
	Here $\mu \defeq \Delta\phi \in \mathcal{D}'(\Omega\cup \widetilde{\omega}_r)$ 
	is a positive measure on $\widetilde{\omega}_r$ so that $\mu(\Omega)$ and 
	$\mu(\widetilde{\omega}_r)$ are well-defined. Moreover, the constant 
	$C_2>0$ only depends on $C_0$. 
\end{thm}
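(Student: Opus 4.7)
The plan is to prove the counting estimate by applying a Poisson--Jensen type identity on small Lipschitz ``half-disk'' domains centered at the sample points $z_j$, then subtracting off the analogous identity for the subharmonic majorant $N^d \phi$ and summing the resulting local inequalities. The guiding principle is that $\frac{1}{2\pi}\Delta \log|u|$ is the (integer-valued) counting measure of the zeros of $u$ while $\frac{N^d}{2\pi}\mu = \frac{N^d}{2\pi}\Delta \phi$ is the target; the two-sided comparison of these Riesz measures is driven by the hypothesis $\log|u|\leq N^d\phi$ on $\widetilde{\omega}_r$ together with the near-equality at the $z_j$.

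Concretely, I would proceed as follows. For each $j$, use the Lipschitz chart at $z_j^0$ to construct a disk-like domain $D_j \subset \Omega \cup \widetilde{\omega}_r$ of size comparable to $r(z_j^0)$, centered at $z_j$, with $\partial D_j \subset \widetilde{\omega}_r$ and with a uniform bi-Lipschitz model. Writing $G_{D_j}(z_j,\cdot)$ for the Green's function and $\omega_{z_j}^{D_j}$ for the harmonic measure, Poisson--Jensen applied to the subharmonic functions $\log|u|$ and $N^d\phi$ and subtracting gives
\begin{equation*}
\sum_{u(z_k)=0,\, z_k \in D_j} G_{D_j}(z_j,z_k) \;-\; \tfrac{N^d}{2\pi}\!\int_{D_j} G_{D_j}(z_j,w)\,d\mu(w)
= \int_{\partial D_j}(\log|u| - N^d\phi)\,d\omega_{z_j}^{D_j} - \bigl(\log|u(z_j)| - N^d\phi(z_j)\bigr).
\end{equation*}
Thanks to \eqref{th:UB} the boundary integrand is $\leq 0$, and thanks to \eqref{th:LB} the last term is $\geq -N^d\varepsilon_j$, yielding a one-sided control on the zeros in $D_j$, weighted by the Green's function, in terms of $N^d\varepsilon_j$ plus a $\mu$-weighted Green's integral.

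Next I would translate the Green's-weighted sum into a count: since $G_{D_j}(z_j,w) \gtrsim 1$ for $w$ at distance $\sim r(z_j)$ from $z_j$ (and up to the complementary boundary) and $G_{D_j}(z_j,w) \asymp |\log|w-z_j|/r(z_j)||$ for $w$ close to $z_j$, one obtains an upper bound on $\#(u^{-1}(0)\cap D_j)$ away from $z_j$, while the contribution of $\mu$ near $z_j$ produces precisely the logarithmic integral appearing in \eqref{th:c}. Summing over $j$ with a bounded-overlap argument (guaranteed by \eqref{eq.LD1.1}) covers a neighborhood of $\partial\Omega$ and, by comparison with $\frac{1}{2\pi}\int_\Omega \Delta\log|u|$, controls how many zeros lie in $\widetilde{\omega}_r$ up to an error $N^d \mu(\widetilde{\omega}_r)$. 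To get the matching lower bound on the count and thus a two-sided estimate, I would apply the same Poisson--Jensen comparison in a slightly inner domain $\Omega' \Subset \Omega$ with $\partial\Omega' \subset \widetilde{\omega}_r$, in which case the distributional identity $\#(u^{-1}(0)\cap \Omega') = \frac{1}{2\pi}\int_{\Omega'}\Delta\log|u|$, together with the boundary matching of $\log|u|$ and $N^d\phi$, pins down $\#(u^{-1}(0)\cap\Omega') - \frac{N^d}{2\pi}\mu(\Omega')$ with the stated errors; the discrepancy between $\Omega'$ and $\Omega$ is again absorbed in $N^d \mu(\widetilde{\omega}_r)$.

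The main obstacle I expect is the Lipschitz (not smooth) nature of $\partial\Omega$: one needs Green's-function and harmonic-measure estimates on Lipschitz domains that are uniform in the small parameter $r(z_j^0)$. This is handled by rescaling each $D_j$ to unit size using the local affine chart $\widetilde{y}=U_{z_j^0}(y - z_j^0)$, where the Lipschitz constant is bounded by $C_0$, so that all Green's and harmonic-measure estimates become uniform. A secondary technical point is the bounded-multiplicity covering argument when summing over $j$, which is ensured by the spacing \eqref{eq.LD1.1} combined with the fact that $r$ is $\tfrac{1}{2}$-Lipschitz on $\omega$; this prevents the error terms from accumulating more than a geometric constant depending only on $C_0$.
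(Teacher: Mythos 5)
First, note that the paper does not actually prove this theorem: it is quoted verbatim (``in a form somewhat adapted to our formalism'') as Theorem~1.1 of \cite{Sj09b}, with earlier versions credited to \cite{Ha06,HaSj08} and the presentation borrowed from \cite{SjVo19a}. So there is no in-paper proof to compare against, and your sketch must be judged on its own merits.

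Your local Poisson--Jensen comparison at the sample points $z_j$, together with the rescale-to-unit-size-via-the-Lipschitz-chart observation, is indeed the right engine and is in the same spirit as the proofs in the cited references. But as written the argument has a genuine gap. The Poisson--Jensen identity on $D_j$, combined with \eqref{th:UB} (which makes the boundary integral $\leq 0$) and \eqref{th:LB} at $z_j$, yields only the \emph{one-sided} inequality
\begin{equation*}
\sum_{u(a)=0,\; a\in D_j} G_{D_j}(z_j,a)\;\leq\;\frac{N^d}{2\pi}\int_{D_j}G_{D_j}(z_j,\cdot)\,d\mu \;+\; N^d\varepsilon_j,
\end{equation*}
because the term $\int_{\partial D_j}(\log|u|-N^d\phi)\,d\omega_{z_j}^{D_j}$ has no lower bound from the hypotheses; $\log|u|$ may be arbitrarily small on $\partial D_j$. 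This bounds the zero count in a shrunken $D_j$ from above only, and in particular the claim that the local argument ``pins down'' the zero count in both directions is not justified.

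The second paragraph of your sketch does not repair this. You propose recovering a lower bound on $\#(u^{-1}(0)\cap\Omega)$ from $\#(u^{-1}(0)\cap\Omega')=\frac{1}{2\pi}\int_{\Omega'}\Delta\log|u|$ together with ``boundary matching,'' but the matching is one-sided on $\widetilde{\omega}_r$ and you have no pointwise lower bound on $\log|u|$ anywhere except at the finite set $\{z_j\}$. The lower bound --- the assertion that $u$ cannot vanish too rarely in $\Omega$ --- is the substantive half of the theorem, and it requires a genuinely global harmonic-majorant/Green's-formula argument carried out along a contour threaded through, or near, the $z_j$: one uses \eqref{th:UB} on all of $\widetilde{\omega}_r$ together with \eqref{th:LB} at the net points to trap the harmonic part of $\log|u|-N^d\phi$ near the boundary, and the error terms in \eqref{th:c} quantify exactly what this trapping costs. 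Without this step the sketch does not establish the two-sided estimate.
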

\subsection{Counting eigenvalues - the Gaussian case}\label{sec:GC}
Let $\Omega \Subset \C$ be an open relatively compact simply connected 
set, possibly dependent on $N$, with a uniformly Lipschitz boundary $\partial \Omega$ 
with associated possibly $N$-dependent weight $0 <r\ll1$, as in Section \ref{sec:LipBD}.
We recall that we suppose that the condition \eqref{i3} holds.
\\
\par
For $p$ satisfying \eqref{gp1} and \eqref{i3} we define for $z\in \C$ 
\begin{equation}\label{c1}
	\phi(z) \defeq \int_{\T^{2d}}  \log |p_0(\rho)-z|  \, d\rho .
\end{equation}
By definition $\phi$ is the logarithmic potential of the direct image measure $(p_0)_*(d\rho)$ 
of the Lebesgue measure $d\rho$ on $\T^{2d}$ under the principal symbol $p_0$ of $p_N$.
The Fubini-Tonelli theorem shows that $\phi \in L^1_{loc}(\C,L(dz))$, and that 
\begin{equation}\label{c2}
		\Delta \phi = 2\pi (p_0)_*(d\rho) \quad \text{in } \mathcal{D}'(\C),
\end{equation}
so $\phi$ is subharmonic. Moreover, by \eqref{i3} we have the following weak form 
of regularity  
\begin{equation}\label{c3}
		\int_{D(z,t^{1/2})} \Delta \phi(w) L(dw) = \mO(t^{\kappa}), \quad 0\leq t \ll1,
\end{equation}
for all $z$ in some neighbourhood of $\partial \Omega$, which shows 
that $\phi(z)$ is continuous there.
\\
\par
Next, pick points 
\begin{equation}\label{c4.0}
		z_j^0 \in \partial\Omega, \quad j=1,\dots, N_0,
\end{equation}
satisfying 
\eqref{eq.LD1.1}. Then 
\begin{equation}\label{c4}
		 N_0 = \mO(r^{-1}),
\end{equation}
and in view of \eqref{eq.LD1.2}, 
\begin{equation}\label{c5}
		\widetilde{w}_r = \widetilde{\partial\Omega}_r  = \partial\Omega + D(0,r).
\end{equation}
Recall that  $N^{-1} \ll \alpha \ll 1$, and assume that
\begin{equation}\label{c5.1}
	0 < t \ll1, \quad  0<\delta \ll N^{-d/2}\alpha^{1/2}.
\end{equation}
Hence, we obtain from \eqref{r6.1} that there exists a constant $C>0$ such that 
with probability $\geq 1 - \e^{-N^d}$, for any $z$ in a neighbourhood of 
$\partial\Omega$ 
\begin{equation}\label{c6}
\log |\det (P^{\delta}-z)| \leq 
N^d \left( \phi(z)  + 
		 	 C\alpha^{\kappa}\log \frac{1}{\alpha}+C\delta N^{\frac{d}{2}}\alpha^{-1/2}\right).
\end{equation}
Moreover, by applying \eqref{gc12.1} 
to all $z_j^0$, $j=1,\dots,N_0$, we get by the union bound that with 
probability $\geq 1 -  \mO(r^{-1}) ( N^{d/2} t +\e^{-N^d})$,
 \begin{equation}\label{c7}
\log |\det (P^{\delta}-z_j^0)| 
\geq N^d \left(\phi(z) + C\alpha^{\kappa}\log \frac{1}{\alpha}+C\delta N^{\frac{d}{2}}\alpha^{-1/2}
		 	 -\varepsilon \right).
\end{equation}
for all $j=1,\dots, N_0$, with 
 \begin{equation}\label{c8}
	\varepsilon = 2C\alpha^{\kappa}\log \frac{1}{\alpha} + 2C \delta N^{\frac{d}{2}}\alpha^{-1/2} 
			    +C\alpha^{\kappa}  \log (t \delta )^{-1}.
\end{equation}
Expressing $t$ in terms of $\varepsilon$, we get 
 \begin{equation}\label{c8.1}
 	t = \alpha^{-2}\delta^{-1} \exp \left( \alpha^{-\kappa}(2\delta N^{\frac{d}{2}}\alpha^{-1/2} - \varepsilon/C) \right).
\end{equation}
Notice that when 
\begin{equation}\label{c8.2}
 	\varepsilon \gg  \alpha^{\kappa}\log \frac{C}{\delta\alpha^2} + \delta N^{\frac{d}{2}}\alpha^{-1/2},
\end{equation}
then $0 < t \ll1 $. Hence, by Theorem \ref{thm:Count}, \eqref{c2} and \eqref{c4} we get that 
\begin{equation}\label{c8.4}
\begin{split}
		&\bigg|\#(\sigma(P^{\delta})\cap \Omega) - {N^d} \int_{p_0^{-1}(\Omega)} d\rho \bigg|  \\
	    &\leq \mO(N^d) \left(
		\int_{p_0^{-1}(\partial\Omega+D(0,r))}d\rho+ \varepsilon N_0 +\sum_{j=1}^{N_0}
		\int_{D\!\left(z_j^0,\frac{r}{4C_1}\right)}\left| \log \frac{|w-z_j^0|}{r} \right |
		(p_0)_*(d\rho)(w)
		\right).
	\end{split}
\end{equation}
with probability 
\begin{equation}\label{c8.5}
	\geq 1 -  \mO(r^{-1}) \left( N^{d/2}  \alpha^{-2}\delta^{-1}
	 \e^{\alpha^{-\kappa}(2\delta N^{\frac{d}{2}}\alpha^{-1/2} - \varepsilon/C)} +\e^{-N^d}\right),
\end{equation}
when 
\begin{equation}\label{c8.3}
 	\varepsilon \gg  \alpha^{\kappa}\log \frac{CN^{d/2}}{\delta\alpha^2} + \delta N^{\frac{d}{2}} \alpha^{-1/2}, 
	\quad 0<\delta \ll N^{-d/2}\alpha^{1/2}.
\end{equation}
By \eqref{c4} we see that the second error term in \eqref{c8.4} is 
 \begin{equation}\label{c8.3.0}
 	 \mO( \varepsilon/r).
\end{equation}
For the last error term in \eqref{c8.4} we have that for any $j\in \{1,\dots,N_0\}$ 
\begin{equation*}
	\begin{split}
		\int_{D\!\left(z_j^0,\frac{r}{4C_1}\right)}\left| \log \frac{|w-z_j^0|}{r} \right |
		 (p_0)_*(d\rho)(w) &= \int_{|p_0 -z| \leq r/(4C_1)}\left| \log \frac{|p_0(\rho)-z_j^0|}{r} \right |d\rho 
		 \\
		 &
		 \leq \int_{0}^{r/(4C_1)}\left| \log \frac{q}{r} \right | dV(q)  \\ 
		 &
		 \leq \mO(r^{1+\kappa}) \int_{0}^{1/(4C_1)}\left| \log q \right | q^{\kappa} dq \\ 
		 & = \mO(r^{1+\kappa}).
	\end{split}
\end{equation*}
which implies that the last error term in \eqref{c8.4} is 
 \begin{equation}\label{c8.3.1}
 	\mO(N_0 r^{1+\kappa}) = \mO(r^{\kappa}).
\end{equation}
Combining this with \eqref{c8.4} yields that 
\begin{equation*}
    	\bigg|\#(\sigma(P^{\delta})\cap \Omega) - {N^d} \int_{p_0^{-1}(\Omega)} d\rho \bigg| 
	   \leq \mO(N^d) \left(
		\int_{p_0^{-1}(\partial\Omega+D(0,r))}d\rho+ \frac{\varepsilon}{r} +r^{\kappa}
		\right).
\end{equation*}
with probability \eqref{c8.5}. 
This concludes the proof of Theorem \ref{thm1}. 
\subsection{Counting Eigenvalues - the universal case}
Let $\Omega \Subset \C$ be as in Section \ref{sec:GC} and 
recall \eqref{c4.0}, \eqref{c4}, \eqref{c5} and \eqref{c1}. 
\par
By \eqref{gc2.4.1}, we have that with probability 
$\geq 1 -N^{-( 1-\tau) \delta_0}$
 \begin{equation}\label{uc1}
	\log |\det (P^{\delta}-z)| \leq N^d \left( \phi(z) + \varepsilon \right)
\end{equation}
 for all $z$ in a neighbourhood of $\partial \Omega$. Here  
 \begin{equation}\label{uc1.1}
	\varepsilon =  	C (N^{-\min(\delta_0,1)\tau \kappa}\log N+N^{-\tau \delta_0 /2}),
\end{equation}
for some sufficiently large $C>0$. 
From \eqref{gc2.11} we deduce that with probability $\geq  1 -  \mO(r^{-1})N^{-( 1-\tau) \delta_0}$, 
 \begin{equation}\label{uc2}
	\log |\det (P^{\delta}-z_j)| \geq N^d \left( \phi(z) 
		 	-\varepsilon
			\right).
\end{equation}
for all $j=1,\dots, N_0$. Thus, by Theorem \ref{thm:Count} (with $N^d$ instead of $N$), 
\eqref{c2}, \eqref{c4}, and there error estimates \eqref{c8.3.0}, \eqref{c8.3.1}, we get that 
\begin{equation}\label{uc3}
		\bigg|\#(\sigma(P^{\delta})\cap \Omega) - {N^d} \int_{p_0^{-1}(\Omega)} d\rho \bigg| 
	    \leq \mO(N^d) \left(
		\int_{p_0^{-1}(\partial\Omega+D(0,r))}d\rho+ \frac{\varepsilon}{r} +r^{\kappa}
		\right).
\end{equation}
with probability 
\begin{equation}\label{uc4}
	\geq  1 -  \mO(r^{-1})N^{-( 1-\tau) \delta_0}.
\end{equation}

\section{Weak convergence of the empirical measure}\label{sec:WC}
We work under the assumptions of Corollary \ref{cor1}. 
Recall that the empirical measure of the eigenvalues of 
$P_{\delta} =p_N + \delta Q_{\omega}$ is given by 
\begin{equation*}
		\mu_N = N^{-d} \sum_{\lambda \in \spec(P_{\delta})} \delta_{\lambda}.
\end{equation*}
Furthermore, we set 
\begin{equation*}
		\mu = (p_0)_*(d\rho), 
\end{equation*}
which has compact support. Notice that also the support of $\mu_N$ is contained 
some fixed $N$-independent compact set with probability close to $1$. Indeed, 
it follows by \eqref{gc2.2} that 
\begin{equation}\label{wc0.1}
\delta \Vert Q_{\omega}\Vert \leq \mO(1) N^{ -\tau \delta_0}, 
\text{ with probability } \geq 1 - N^{-(1-\tau)\delta_0},
\end{equation}
which in combination with Proposition \ref{prop:qf2} yields that 
\begin{equation}\label{wc0.2}
\| P^{\delta} \| \leq \mO(1), 
\text{ with probability } \geq 1 - N^{-(1-\tau)\delta_0}. 
\end{equation}

To prove Corollary \ref{cor1} one may either use directly Theorem \ref{thm2}, or, 
observing that $\log|\det(P^{\delta}-z)|$ is the logarithmic potential of $\mu_N$, 
use \eqref{uc1} and \eqref{uc1.1} in combination with \cite[Theorem 2.8.3]{Ta12}. 
In the following, we present the first approach. 
\begin{proof}[Proof of Corollary \ref{cor1}]
0. Let $\varepsilon > 0$, and set 
\begin{equation}\label{wc1.0}
	\Omega = \Omega_{\varepsilon,j,k} = [j,j+1[\varepsilon + i  [k,k+1[\varepsilon, 
	\quad j,k \in \Z.
\end{equation}
Notice that $\Omega$ is uniformly Lipschitz with respect to the constant, 
possibly $N$-dependent Lipschitz weight $0 < r \ll 1$, as defined in Section \ref{sec:c1}. 
\\
\par
1. We begin by proving Corollary \ref{cor1} in the case when $\delta_0 > 1$, and 
we follow the strategy of \cite[Section 7.1]{SjVo19a}. By choosing 
$r = N^{-\tau\kappa/2}$, $0< \tau \leq 1$, we know from Theorem \ref{thm2} that 
\begin{equation}\label{wc1}
	\left| \mu_N(\Omega)- \mu(\Omega) \right| = \mO(1)\left(
		\int_{p_0^{-1}(\partial\Omega+D(0,r))}d\rho+ 
		N^{-\tau \kappa/2}\log N+N^{-\tau (\delta_0 - \kappa)/2} +N^{-\tau\kappa^2/2}
		\right).
\end{equation}
with probability $\geq 1 - \mO(N^{\tau\kappa /2})N^{-( 1-\tau) \delta_0}$. Notice first 
that since $\delta_0>1$, we have for $\tau>0$ sufficiently small that 
\begin{equation}\label{wc1.9}
		\frac{\tau\kappa}{2} -( 1-\tau) \delta_0 = \beta < -1,
\end{equation}
which implies that 
\begin{equation}\label{wc2}
		\sum_{N=1}^{\infty} N^{\beta} < + \infty. 
\end{equation}
Since \eqref{i3} is assumed to hold uniformly for all $z\in \C$, it follows that 
$p_0$ is non-constant, so the Morse-Sard theorem implies that $p_0^{-1}(\partial\Omega)$ 
has Lebesgue measure $0$. The regularity of the Lebesgue measure then shows that
\begin{equation}\label{wc3}
	\int_{p_0^{-1}(\partial\Omega+D(0,r))}d\rho= o(1), \quad N\to \infty.
\end{equation}
In particular, this shows that the error term on the right hand side of \eqref{wc1} is 
$o(1)$ as $N\to \infty$. From \eqref{wc2}, we know that the probability that \eqref{wc1} does not hold is 
summable. Hence, the Borel-Cantelli lemma yields that, for any $\Omega$ \eqref{wc1.0}, almost surely, 
\begin{equation}\label{wc4}
	 \mu_N(\Omega) \longrightarrow \mu(\Omega), \quad N\to \infty. 
\end{equation}
Let $\varepsilon_\nu >0 $, $\nu\in \N$, be a decreasing sequence tending to $0$. 
Since the countable union of sets of probability $0$ has probability $0$, it follows that,  
almost surely, \eqref{wc4} holds for all $\Omega_{\varepsilon_\nu,j,k}$.
\par
Let $G$ be the set of all step functions of the form 
\begin{equation}\label{wc4.1}
		\psi= \sum_{j,k} g_{j,k} \mathbf{1}_{\Omega_{\varepsilon_\nu,j,k}}, \quad 
		g_{j,k} \in \mathds{Q}. 
\end{equation}
Then, almost surely, we have that for every $\psi \in G$ 
\begin{equation}\label{wc5}
		\int \psi d\mu_N \longrightarrow \int \psi d\mu, \quad N \to \infty. 
\end{equation}
Notice that since $\tau >0$ has been chosen sufficiently small so that \eqref{wc1.9} holds, 
it follows from \eqref{wc0.2} 
and the Borel-Cantelli Lemma that almost surely the support of $\mu_N$ is contained 
in some fixed compact set. Hence, to prove the weak convergence of $\mu_N$ to $\mu$ 
it is sufficient to consider compactly supported test functions. 
\par
Let $\phi\in C_c(\C;\R)$. For every $\varepsilon >0$, we can find a $\psi \in G$, such 
that $\| \psi - \phi\|_{\infty} \leq \varepsilon$. Since $\mu_N$ and $\mu$ are probability 
measures, we get that 
\begin{equation}\label{wc6}
		\left| \int \psi d\mu_N -  \int \phi d\mu_N \right | \leq \varepsilon, 
		\quad 
		\left| \int \psi d\mu -  \int \phi d\mu \right | \leq \varepsilon.
\end{equation}
 Combining \eqref{wc5} and \eqref{wc6}, it follows that almost surely, for all $\phi\in C_c(\C;\R)$ 
 and all $\varepsilon >0$ we have that 
 \begin{equation*}
	\limsup\limits_{N\to\infty} \left| \int \phi d\mu_N -  \int \phi d\mu \right | \leq 2 \varepsilon.
\end{equation*}
 Hence, almost surely 
 \begin{equation*}
	  \mu_N  \rightharpoonup \mu = (p_0)_*(d\rho),
\end{equation*}
proving the first claim of Corollary \ref{cor1}. 
\\
\par 
2. Now we turn to the case when $\delta_0 \in ]0,1]$. We recall that 
$\mu_N  \rightharpoonup \mu$ in probability, means that for all $\eta>0$ 
and all $\phi\in C_b(\C;\R)$ 
 \begin{equation}\label{wc7}
	  \prob \left( |\mu_N(\phi)-\mu(\phi)| \geq \eta \right) = o(1), \quad N\to \infty.
\end{equation}
We begin by reducing to the case of compactly supported test functions. From 
\eqref{wc0.2} we know that there exists a compact set $K\Subset \C$ such that 
 \begin{equation}\label{wc7.1}
	  \prob \left(\supp \mu_N \not\subset K \right) = o(1), \quad N\to \infty.
\end{equation}
After possibly enlarging $K$ so that also $\supp \mu \subset K$, we let $\widetilde{K}\Subset \C$ 
be an open relatively compact set such that $\overline{K+D(0,\gamma)}\subset \widetilde{K}$, 
for some $\gamma>0$. Let $\chi\in C_c^{\infty}(\C;[0,1])$ with $\chi \equiv 1$ on $\overline{K}$ 
and $\chi\equiv 0 $ on $\widetilde{K}^c$. Then, by \eqref{wc7.1} we get that 
for all $\phi\in C_b(\C;\R)$ 
 \begin{equation*}
 \begin{split}
	  \prob \left( |\mu_N(\phi)-\mu(\phi)| \geq \eta \right) & = 
	  \prob \left( |\mu_N(\phi)-\mu(\phi)| \geq \eta \text{ and } \supp \mu_N \subset K\right)  \\
	  &\phantom{=}+ \prob \left( |\mu_N(\phi)-\mu(\phi)| \geq \eta \text{ and } \supp \mu_N \not\subset K \right) \\ 
	 &=  \prob \left( |\mu_N(\phi)-\mu(\phi)| \geq \eta \text{ and } \supp \mu_N \subset K\right) +o(1) \\
         &\leq  \prob \left( |\mu_N(\phi \chi)-\mu(\phi\chi)| \geq \eta \right) +o(1) .
\end{split}
\end{equation*}
Hence, it is enough to show \eqref{wc7} for all test functions $\phi\in C_c(\widetilde{K};\R)$.
\\
\par
Let $\Omega$ be as in \eqref{wc1.0}. 
Picking $r= N^{-\tau \delta_0\kappa/4}$ and $\tau =1/5$, we get from Theorem \ref{thm2} 
and \eqref{wc3} that 
\begin{equation*}
	\left| \mu_N(\Omega)- \mu(\Omega) \right| = o(1), 
\end{equation*}
with probability $\geq 1 - \mO(1)N^{-3\delta_0/4}$. Hence, for any $\eta >0$  
\begin{equation*}
	\prob \left ( \left| \mu_N(\Omega)- \mu(\Omega) \right| \geq \eta \right) = o(1), 
	\quad 
	N\to \infty.
\end{equation*}
Let $\phi \in C_c(\widetilde{K};\R)$ and recall \eqref{wc4.1}. Then, for any $\varepsilon >0$ 
we can find a $\psi \in G$ such that $\| \phi - \psi \|_{\infty} \leq \varepsilon$ and with all but 
finitely many, say $M_{\varepsilon}>0$, $g_{j,k} =0$. Then, for any $\eta'>0$ we 
get by the union bound that 
\begin{equation*}
\begin{split}
\prob  ( | \mu_N(\Omega_{\varepsilon,j,k})- & \mu(\Omega_{\varepsilon,j,k}) | 
	\leq \eta' \text{ whenever } g_{j,k} \neq 0  )  \\ 
	&= 1 - \prob \left [\neg ( \left| \mu_N(\Omega_{\varepsilon,j,k})- \mu(\Omega_{\varepsilon,j,k}) \right| 
	\leq \eta' \text{ whenever } g_{j,k} \neq 0)  \right] \\ 
	&\geq  1 - \sum_{j,k :~ g_{j,k} \neq 0}
	 \prob \left [ \left| \mu_N(\Omega_{\varepsilon,j,k})- \mu(\Omega_{\varepsilon,j,k}) \right| 
	\geq \eta'  \right] \\ 
	&\geq  1 - M_{\varepsilon}o(1).
\end{split}
\end{equation*}
So, since $|g_{j,k}| \leq \|\phi\|_{\infty}+\varepsilon$, we get that 
\begin{equation}\label{wc8}
\left| \int\psi d\mu_N- \int \psi d\mu \right| 
\leq \sum_{j,k} |g_{j,k}| \left| \mu_N(\Omega_{\varepsilon,j,k})- \mu(\Omega_{\varepsilon,j,k}) \right| 
\leq M_{\varepsilon}(\|\phi\|_{\infty}+\varepsilon) \eta'
\end{equation}
with probability $\geq 1 - M_{\varepsilon}o(1)$. Moreover, since $\mu_N$ and $\mu$ are probability 
measures, we get that 
\begin{equation}\label{wc9}
		\left| \int \psi d\mu_N -  \int \phi d\mu_N \right | \leq \varepsilon, 
		\quad 
		\left| \int \psi d\mu -  \int \phi d\mu \right | \leq \varepsilon.
\end{equation}
Let $\eta>0$, and set $\varepsilon =\eta$ and $\eta' = \eta(M_{\varepsilon}(\|\phi\|_{\infty} + \varepsilon))^{-1}$ 
in the \eqref{wc8} and \eqref{wc9}. Then, 
\begin{equation}\label{wc10}
	\begin{split}
		\left| \int \phi d\mu_N -  \int \phi d\mu \right |  &\leq 
		\left| \int \phi d\mu_N -  \int \psi d\mu_N \right | + 
		\left| \int \psi d\mu_N -  \int \psi d\mu \right |  \\ 
		&\phantom{=}+\left| \int \psi d\mu -  \int \phi d\mu \right | \\ 
		&\leq 3\eta,
	\end{split}
\end{equation}
with probability $\geq 1 - o(1)$ as $N\to \infty$, and we conclude the second statement of 
Corollary \ref{cor1}.
\end{proof}
\appendix
\section{Estimate on the smallest singular value}\label{app1}
We present for the reader's convenience a complex version of a result 
due to Sankar, Spielmann and Teng \cite[Lemma 3.2]{SaTeSp06}, see also 
\cite[Theorem 2.2]{TaVu10}.
\begin{thm}\label{thm:SSV}
There exists a constant $C>0$ such that the following holds. 
Let $N\geq 2$, let $X_0$ be an arbitrary complex $N\times N$ matrix, 
and let $Q$ be an $N\times N$ complex 
Gaussian random matrix whose entries are 
all independent copies of a 
complex Gaussian random variable $q\sim \mathcal{N}_{\C}(0,1)$. Then, for any $\delta >0$
\begin{equation*}
	\prob \left(s_N(X_0 + \delta Q) < \delta t \right) \leq C
	N t^2.
\end{equation*}
\end{thm}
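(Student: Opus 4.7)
The plan is to prove the bound in three steps: reduce to a tail estimate on $\|A^{-1} v\|$ for a fixed unit vector $v$, prove that estimate by block inversion, and then recover the operator-norm bound via a random test vector, picking up the factor $N$. Throughout, write $A = X_0 + \delta Q$. The first step reduces the proof to the claim that, for every fixed unit vector $v \in \C^N$,
\begin{equation*}
\prob\bigl(\|A^{-1} v\| \geq s\bigr) \leq \frac{1}{\delta^2 s^2},
\end{equation*}
uniformly in $X_0$. Since the joint density of $Q$ is proportional to $\e^{-\|Q\|_{\mathrm{HS}}^2}$, it is invariant under unitary conjugation $Q \mapsto U^* Q U$. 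Choosing a unitary $U$ with $U e_1 = v$, the matrix $B := U^* A U$ satisfies $\|B^{-1} e_1\| = \|A^{-1} v\|$ and has the same form as $A$ (with $X_0$ replaced by $U^* X_0 U$), so it suffices to prove the bound for $v = e_1$.

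The second step is a direct computation. Write $A$ as a block matrix $\begin{pmatrix} a & b^T \\ c & D\end{pmatrix}$ with $a \in \C$, $b, c \in \C^{N-1}$, and $D \in \C^{(N-1)\times (N-1)}$. The Schur complement formula yields
\begin{equation*}
\|A^{-1} e_1\|^2 = \frac{1 + \|D^{-1} c\|^2}{|a - b^T D^{-1} c|^2}.
\end{equation*}
Conditional on $(c, D)$, the top row $(a, b)$ is an independent complex Gaussian, so $\alpha := a - b^T D^{-1} c$ is a complex Gaussian with variance $\delta^2 (1 + \|D^{-1} c\|^2)$ around some conditional mean. Its density is therefore bounded by $[\pi \delta^2 (1 + \|D^{-1} c\|^2)]^{-1}$, giving
\begin{equation*}
\prob\bigl(\|A^{-1} e_1\|^2 \geq M \,\big|\, c, D\bigr) = \prob\bigl(|\alpha|^2 \leq (1 + \|D^{-1} c\|^2)/M \,\big|\, c, D\bigr) \leq \frac{1}{\delta^2 M}.
\end{equation*}
The numerator $1 + \|D^{-1} c\|^2$ and the variance cancel exactly, so the bound is uniform in $(c, D)$ and holds unconditionally.

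The third step turns the fixed-vector bound into a bound on $s_N(A) = 1/\|A^{-1}\|$ via a random test vector. Let $u$ be uniform on the unit sphere in $\C^N$, independent of $A$, and let $q_1$ be a top right singular vector of $A^{-1}$. Then $\|A^{-1} u\|^2 \geq \|A^{-1}\|^2 |q_1^* u|^2$; conditional on $A$, $|q_1^* u|^2$ is $\mathrm{Beta}(1, N-1)$-distributed, so $\prob(|q_1^* u|^2 \geq 1/(2N) \mid A) = (1 - 1/(2N))^{N-1} \geq c_0$ for a universal constant $c_0 > 0$. It follows that
\begin{equation*}
\prob\bigl(\|A^{-1}\| \geq T\bigr) \leq c_0^{-1} \prob\bigl(\|A^{-1} u\|^2 \geq T^2/(2N)\bigr) \leq \frac{2N}{c_0 \delta^2 T^2},
\end{equation*}
where the final inequality applies the fixed-vector bound conditionally on $u$ with $s = T/\sqrt{2N}$. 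Taking $T = 1/(\delta t)$ yields the theorem with $C = 2/c_0$.

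The main difficulty is the second step: the exact cancellation between numerator and variance is what makes the fixed-vector bound uniform in $(c, D)$, and the fact that a complex Gaussian has density bounded by $1/(\pi \sigma^2)$—quadratic in $1/\sigma$ rather than linear as in the real case—is precisely what produces the $t^2$ rate in place of the $t$ rate of the original Sankar-Spielman-Teng lemma for real Gaussians.
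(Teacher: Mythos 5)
Your argument is correct and mirrors the paper's proof step for step: a unitary reduction to a fixed coordinate vector, a fixed-vector tail bound obtained by conditioning on all rows but the first (the paper's Lemma \ref{lem:a1}), and an upgrade to an operator-norm bound via a random test vector and the anti-concentration of its overlap with the top singular direction (the paper's Lemma \ref{lem:a2} together with the final argument). Your Schur-complement calculation is merely a more explicit phrasing of the paper's orthogonal-complement argument --- identifying the paper's unit vector $t$ with $\pm(1,-D^{-1}c)^T/\sqrt{1+\|D^{-1}c\|^2}$ makes the two computations coincide --- and your $\mathrm{Beta}(1,N-1)$ tail bound replaces the paper's equivalent Gaussian-ratio estimate in Lemma \ref{lem:a2}.
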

The proof, which is a straightforward modification of the proof of \cite[Lemma 3.2]{SaTeSp06}, 
is presented here for the reader's convenience. 
\begin{lem}\label{lem:a1}
There exists a $C>0$ such that for any $\nu\in\C^N$ with $\|\nu\|=1$, we have that for any $\tau>0$ 
\begin{equation*}
	\prob \left( \|(X_0 + \delta Q)^{-1}\nu\| > \tau  \right) \leq C
	\frac{1}{ \tau^2 \delta^2}.
\end{equation*}
\end{lem}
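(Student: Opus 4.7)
The plan is to reduce the estimate to a one-dimensional small-ball bound for a scalar complex Gaussian, exploiting the fact that the distribution of $Q$, with density $\pi^{-N^2}\exp(-\tr Q^*Q)$ on $\C^{N\times N}$, is invariant under $Q\mapsto UQU^*$ for any unitary $U$. Picking $U$ with $U\nu=e_1$ and using the identity $(UMU^*)^{-1}e_1 = UM^{-1}\nu$ for $M=X_0+\delta Q$, I absorb $UX_0U^*$ into a new arbitrary matrix (still called $X_0$) and reduce to bounding $\|w\|$ where $w\defeq (X_0+\delta Q)^{-1}e_1$.

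The next step is to extract an explicit formula for $\|w\|$. With $M\defeq X_0+\delta Q$ and rows $m^{(1)},\ldots,m^{(N)}$, reading the equation $Mw=e_1$ via the bilinear pairing gives $m^{(1)}\!\cdot\! w = 1$ and $m^{(j)}\!\cdot\! w=0$ for $j\geq 2$. Almost surely the rows $m^{(2)},\ldots,m^{(N)}$ are linearly independent, so their common bilinear zero set is a complex line; I pick a Hermitian unit vector $\xi$ spanning it. Then $w=\lambda\xi$ with $\lambda=1/(m^{(1)}\!\cdot\!\xi)$, and hence
\[
\|w\| = \frac{1}{|m^{(1)}\!\cdot\!\xi|}.
\]

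The crucial point is that $\xi$ is a measurable function of rows $2,\ldots,N$ of $Q$, hence independent of the first row $q^{(1)}$. Conditionally on $\xi$, one has $m^{(1)}\!\cdot\!\xi = x_0^{(1)}\!\cdot\!\xi+\delta\,q^{(1)}\!\cdot\!\xi$, and since $\|\xi\|=1$ and the entries of $q^{(1)}$ are iid $\mathcal{N}_\C(0,1)$, the scalar $q^{(1)}\!\cdot\!\xi=\sum_k q^{(1)}_k\xi_k$ is itself $\mathcal{N}_\C(0,1)$. Thus the conditional density of $m^{(1)}\!\cdot\!\xi$ is bounded by $(\pi\delta^2)^{-1}$, so
\[
\prob\bigl(|m^{(1)}\!\cdot\!\xi|<1/\tau\,\big|\,\xi\bigr) \leq (\pi\delta^2)^{-1}\cdot\pi\tau^{-2} = (\tau\delta)^{-2},
\]
and taking expectation in $\xi$ yields the lemma with $C=1$. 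The main obstacle is really step two: one must use the bilinear (rather than Hermitian) pairing so that the algebra of $Mw=e_1$ matches, and verify that the resulting $\xi$ is a measurable function of the lower rows so the conditional independence used in the final step is genuine. Everything after the geometric setup is a short Gaussian density computation.
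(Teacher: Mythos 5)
Your argument is correct and follows essentially the same route as the paper's: reduce via unitary invariance of the Gaussian ensemble, extract the identity $\|w\| = |m^{(1)}\cdot\xi|^{-1}$ with $\xi$ the unit vector annihilated bilinearly by the lower rows (this is exactly the paper's vector $t$ and the relation $\|B^{-1}e_1\| = |t\cdot b_1|^{-1}$), condition on those rows, and apply a small-ball estimate for a shifted scalar complex Gaussian. The only cosmetic differences are that you conjugate $Q\mapsto UQU^*$ while the paper left-multiplies $B=UX$, and that you bound the conditional density directly by $(\pi\delta^2)^{-1}$ rather than invoking the paper's monotonicity-of-shift inequality (equation (A.2)); both yield the same constant.
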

\begin{proof}
	1. 
	Since $Q$ is a Gaussian random matrix, it is clear that the zero set 
	of the map $\C^{N\times N} \ni Q\mapsto \det (X_0 + \delta Q)$ has Lebesgue measure $0$. 
	Hence, $X:=X_0 + \delta Q$ is almost surely invertible.
	\par
	Let $U$ be a $N\times N$ unitary matrix such that $U^*e_1 = \nu$, where 
	$e_1$ is the unit vector in $\C^N$ with $1$ in the first entry, and $0$ in the other entries. 
	Write $B=UX$ and $B_0 = UX_0$, then, almost surely,  
	\begin{equation*}
		\| X^{-1} \nu \| = \| X^{-1} U^*e_1\| = \| B^{-1} e_1\|.
	\end{equation*}
	We denote by $\widehat{b}$ the first column of $B^{-1}$, and by 
	$b_j$, $j=1,\dots,N$, the rows of $B$, hence $\widehat{b}\cdot b_1=1$ 
	and $\widehat{b}\cdot b_j=0$, $j=2,\dots, N$.
	Notice that the $b_i$ are linearly independent and let $t$ denote the 
	unit vector which is orthogonal to the space spanned by the 
	$\overline{b}_j$ for $j=2,\dots, N$. Then, 
	\begin{equation*}
		\widehat{b} = ( \widehat{b} | t ) \,t, \quad \text{and } \quad  ( \widehat{b} | t )  (t \cdot b_1)=1.
	\end{equation*}
	Hence, $\| B^{-1} e_1\| = | t \cdot b_1|^{-1}$, and 
	\begin{equation}\label{ap0}
	\prob \left( \|(X_0 + \delta Q)^{-1}\nu\| > \tau  \right) = 
	\prob \left( | t \cdot b_1| < \tau^{-1}  \right).
	\end{equation}
	\par
	2. 
	Since $U$ is unitary it follows that the entries of $\delta UQ$ are independent and identically 
	distributed complex Gaussian random variables $\sim \mathcal{N}_{\C}(0,\delta^2)$. Since $t$ 
	is a unit vector depending only on $b_i$, $i=2,\dots, N$, it follows that when fixing these row vectors, 
	$t \cdot b_1-\lambda$, with $\lambda = ( B_0 t)_1$, is a complex Gaussian random variables 
	$\sim \mathcal{N}_{\C}(0, \delta^2)$. Then, 
	\begin{equation*}
	\prob \left( | t \cdot b_1| < \tau^{-1}  \right) 
	\leq 	\prob \left( | t \cdot b_1 - \lambda| < \tau^{-1}  \right) 
	\leq \frac{1}{\tau^2 \delta^2},
	\end{equation*}
	and we conclude the second statement of the Lemma. Here,
	 the second inequality follows from a straightforward calculation. To see the first 
	inequality, it is enough to show that for a complex Gaussian random variable 
	$u\sim  \mathcal{N}_{\C}(0, \delta^2)$, we have that for any $b>0$, $x \geq 0$,
	\begin{equation}\label{ap1}
	\prob \left( | x + \Rea u | < b  \right) 
	\leq 	\prob \left( | \Rea u | < b  \right). 
	\end{equation}
	The left hand side is equal to the integral 
	\begin{equation*}
	 I(x) = \frac{1}{\delta \sqrt{\pi}} \int_{-(x+b)}^{b-x} \e^{-t^2/\delta^2} dt.
	\end{equation*}
	Since
	\begin{equation*}
	 \frac{d}{dx}I(x) = \frac{1}{\delta \sqrt{\pi}} \left[  \e^{-(x+b)^2/\delta^2} -  \e^{-(b-x)^2/\delta^2} \right] \leq 0,
	\end{equation*}
	the map $x\mapsto I(x)$ is decreasing, so \eqref{ap1} holds, as it is trivially true 
	for $x=0$. 
\end{proof}
\begin{lem}\label{lem:a2}
	There exists a constant $C>1$ such that the following holds.
	Let $N\geq 2$, and let $\nu$ be a uniformly distributed random 
	unit vector in $\C^N$. Then, for any $0 < c \leq 1$
	\begin{equation*}
	\prob \left( |\nu_1| \geq \sqrt{\frac{c}{CN}} \right) \geq (1-\e^{-2}) \prob \left (|z| > \sqrt{c}\right),
	\end{equation*}
	where $z\sim \mathcal{N}_{\C}(0,1)$.
\end{lem}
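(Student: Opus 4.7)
The plan is to exploit the standard representation of a uniformly distributed unit vector in $\C^N$ as a normalized complex Gaussian vector, and then reduce the desired probability bound to two independent events which can be estimated separately.

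First, I would write $\nu = g/\|g\|$, where $g = (g_1,\ldots,g_N) \in \C^N$ has independent entries $g_j \sim \mathcal{N}_{\C}(0,1)$. Since the law of $g$ is invariant under unitary rotations, $\nu = g/\|g\|$ is indeed uniformly distributed on the unit sphere of $\C^N$ and independent of $\|g\|$, so it has the correct distribution. With this representation,
\begin{equation*}
	\prob\left( |\nu_1|^2 \geq \frac{c}{CN}\right)
	= \prob\!\left( |g_1|^2 \geq \frac{c}{CN} \|g\|^2 \right),
\end{equation*}
so the key inclusion of events is
\begin{equation*}
	\{ |g_1|^2 \geq c \} \cap \{ \|g\|^2 \leq CN \}
	~\subseteq~ \left\{ |g_1|^2 \geq \tfrac{c}{CN} \|g\|^2 \right\},
\end{equation*}
which in turn is implied by $\{ |g_1|^2 \geq c \} \cap \{ |g_2|^2 + \cdots + |g_N|^2 \leq CN - c\}$.

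Next, using that $g_1$ is independent of $(g_2,\ldots,g_N)$ and that $c \leq 1$, I would bound
\begin{equation*}
	\prob\!\left( |\nu_1|^2 \geq \frac{c}{CN}\right)
	\geq \prob\!\left( |g_1|^2 \geq c \right) \cdot
	     \prob\!\left( Y \leq CN - 1 \right),
\end{equation*}
where $Y \defeq |g_2|^2 + \cdots + |g_N|^2$. Since $g_1 \stackrel{d}{=} z$ with $z \sim \mathcal{N}_{\C}(0,1)$, the first factor equals $\prob(|z| \geq \sqrt{c})$, which is precisely the right hand side of the claimed inequality modulo the constant $1 - \e^{-2}$.

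It therefore remains to show that for a sufficiently large universal constant $C>1$ and all $N\geq 2$, one has $\prob(Y \leq CN - 1) \geq 1 - \e^{-2}$. Here I would use that each $|g_j|^2$ is exponential with rate $1$, so $Y$ has mean $N-1$ and variance $N-1$. The cleanest estimate is Chebyshev:
\begin{equation*}
	\prob( Y > CN - 1 )
	= \prob\!\left( Y - (N-1) > (C-1)N \right)
	\leq \frac{N-1}{(C-1)^2 N^2}
	\leq \frac{1}{2(C-1)^2},
\end{equation*}
which is $\leq \e^{-2}$ provided $C \geq 1 + \e/\sqrt{2}$; any such constant (say $C=3$) works uniformly in $N \geq 2$. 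I do not expect any real obstacle: the only mild subtlety is to keep $C$ independent of both $N$ and $c$, which is handled by the bound $c \leq 1$ when passing from $CN - c$ to $CN - 1$. Combining the two factors then yields the advertised lower bound.
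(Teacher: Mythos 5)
Your proof is correct and follows essentially the same route as the paper: represent $\nu$ as a normalized complex Gaussian vector, use the independence of $g_1$ and $(g_2,\dots,g_N)$ together with $c\le 1$ to reduce the event $\{|g_1|^2\ge \frac{c}{CN}\|g\|^2\}$ to $\{|g_1|^2\ge c\}\cap\{\|(g_2,\dots,g_N)\|^2\le CN-1\}$, and then bound the tail of $\|(g_2,\dots,g_N)\|^2$. The only (harmless) difference is that you control $\prob\left(\sum_{j\ge 2}|g_j|^2 > CN-1\right)$ by Chebyshev with an explicit constant such as $C=3$, whereas the paper invokes an exponential Markov bound yielding $1-\e^{-N}$; both suffice since only the factor $1-\e^{-2}$ is needed.
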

\begin{proof}
	Let $z\in \C^N$ be a random vector whose entries 
	$z_j\sim \mathcal{N}_{\C}(0,1)$, $j=1,\dots,N$ are
	 independent and identically distributed 
	complex Gaussian random variables. Then, 
	\begin{equation*}
	\nu_*(d\prob) = \left(\frac{z}{\|z\|}\right)_*(d\prob)
	\end{equation*}
	Writing $z=(z_1,z')$, we get that
	\begin{equation*}
		\begin{split}
			\prob \left( |\nu_1| \geq \sqrt{\frac{c}{C N}} \right)  
			&=
			\prob \left( \frac{|z_1^2|}{\|z\|^2} \geq \frac{c}{CN} \right) \\ 
			&=
			\prob \left( \frac{(CN-1)|z_1^2|}{\|z'\|^2} \geq \frac{(CN-1)c}{CN-c} \right) \\ 
			&\geq 
			\prob \left( \frac{(CN-1)|z_1^2|}{\|z'\|^2} \geq c \right) \\ 
			&\geq 
			\prob \left( \|z'\|^2 \leq (CN-1) \right)
			\prob \left( |z_1^2|\geq c \right).
		\end{split}
	\end{equation*}
	Since $z'$ is a complex Gaussian random vector in $\C^{N-1}$ with 
	independent and identically distributed entries $\sim\mathcal{N}_{\C}(0,1)$, 
	we get from Markov's inequality that for $C>1$ large enough 
	\begin{equation*}
		\begin{split}
			\prob \left( \|z'\|^2 \leq (CN-1) \right) 
			\geq 1 -\e^{-N},
		\end{split}
	\end{equation*}
	and the statement of the Lemma follows.
\end{proof}
\begin{proof}[Proof of Theorem \ref{thm:SSV}]
	Let $\nu$ be a uniformly distributed random unit vector in $\C^N$. By Lemma 
	\ref{lem:a1} we know that for any $\tau >0$ 
	\begin{equation}\label{ap2}
	\prob \left( \|(X_0 + \delta Q)^{-1}\nu\| > \tau  \right) \leq C
	\frac{1}{ \tau^2 \delta^2}.
	\end{equation}
	Write $X:=X_0 + \delta Q$, and let $u$ be the unit eigenvector of $XX^*$ corresponding 
	to its smallest eigenvalue $t_1^2 \geq 0$, i.e. 
	\begin{equation}\label{ap3}
		XX^* u = t_1^2 u.
	\end{equation}
	Then, almost surely, 
	\begin{equation}\label{ap4}
		\| X^{-1} u\|  = t_1^{-1}\| u \| = \|X^{-1}\|.
	\end{equation}
	Writing $\nu = \nu_0 u + \nu^{\perp}$, with $\nu_0=(\nu| u)$ and $\nu^{\perp}$ orthogonal 
	to $u$, we see that, almost surely, 
	\begin{equation*}
		\begin{split}
			\| X^{-1} \nu \|^2 &= ( (XX^*)^{-1}   \nu_0 u + \nu^{\perp}|  \nu_0 u + \nu^{\perp}) \\ 
			& = |\nu_0|^2 ( (XX^*)^{-1}   u |u ) +( (XX^*)^{-1}   \nu^{\perp} |\nu^{\perp}) \\ 
			& \geq |\nu_0|^2\|X^{-1}\|^2.
		\end{split}
	\end{equation*}
	Let $C>1$ be as in Lemma \ref{lem:a2}. Then, using the above, we get that 
	for any $0 < c\leq 1$ and any $\tau>0$, 
	 \begin{equation}\label{ap5}
		\begin{split}
			\prob \left(\| X^{-1} \nu\| > \tau \sqrt{\frac{c}{C N}} \right)  
			&\geq \prob \left(|\nu_0|\|X^{-1}\| > \tau \sqrt{\frac{c}{C N}} \right)  \\
			&\geq \prob \left(\|X^{-1}\|> \tau 
			\text{ and } |\nu_0| >   \sqrt{\frac{c}{C N}}\right)  \\
			 &\geq \prob \left(\|X^{-1}\|> \tau \right) \min\limits_{ Q: \|X^{-1}\| > \tau}
			 \prob \left( |\nu_0| >   \sqrt{\frac{c}{C N}} \right).
		\end{split}
	\end{equation}
	 Since the distribution of $\nu$ is invariant a under unitary change of variables, we 
	 may express $\nu$ in an orthonormal basis of $\C^N$ which has $u$ as its first 
	 vector, wherefore the first component of $\nu $ is $(\nu|u)$. Thus, using Lemma \ref{lem:a2} 
	 we obtain from \eqref{ap5} that 
	  \begin{equation*}
			\prob \left(\| X^{-1} \nu\| > \tau \sqrt{\frac{c}{C N}} \right)  
			\geq \prob \left(\|X^{-1}\|> \tau \right)
			(1-\e^{-2}) \prob \left (|z| > \sqrt{c}\right), 
	\end{equation*}
	 where $z \sim \mathcal{N}_{\C}(0,1)$ is a complex Gaussian random variable. 
	 This, together with \eqref{ap2}, then yields that there exists a constant $C>0$ such 
	 that
	 \begin{equation*}
			\prob \left(\|X^{-1}\|> \tau \right) 
			\leq   \frac{CN}{c\,\prob \left (|z| > \sqrt{c}\right)\tau^2 \delta^2}. 
	\end{equation*}
	Since, we may choose $c \in ]0,1]$, we take $c=1$, which gives that 
	$c\,\prob \left (|z| > \sqrt{c}\right) = e^{-1}$. Recall that $\| X^{-1} \| = s_N(X)^{-1}$, 
	so taking $t =(\tau \delta)^{-1}$, we deduce that there exists a constant $C>0$ 
	such that for any $N\geq 2$ 
	 \begin{equation*}
			\prob \left( s_N(X) < t\delta \right) 
			\leq  CNt^2. \qedhere 
	\end{equation*}
\end{proof}

\providecommand{\bysame}{\leavevmode\hbox to3em{\hrulefill}\thinspace}
\providecommand{\MR}{\relax\ifhmode\unskip\space\fi MR }
\providecommand{\MRhref}[2]{%
  \href{http://www.ams.org/mathscinet-getitem?mr=#1}{#2}
}
\providecommand{\href}[2]{#2}

\end{document}